\newtheoremstyle{mythmstyle}{}{}{}{}{\bf}{.}{ }{}
\theoremstyle{plain}
\newtheorem{thm}{Theorem}
\newtheorem{lemma}{Lemma}
\newtheorem{prop}{Proposition}
\newtheorem{cor}{Corollary}
\newtheorem{claim}{Claim}
\theoremstyle{definition}
\newtheorem{defn}{Definition}
\newtheorem{step}{Step}
\newenvironment{stepbis}[1]
  {%
   \addtocounter{step}{-1}%
   \begin{step}}
  {\end{step}}
\theoremstyle{mythmstyle}
\newtheorem{myexample}{Example}
\newtheorem{oss}{Remark}
\newtheorem{assumption}{Assumption}
\newenvironment{example}[1][0]
{ 
  \ifthenelse{\equal{#1}{0}}{
  \myexample
}
{ 
  \myexample
  \addtocounter{myexample}{-1}
}
}
{\endmyexample}
\definecolor{darkgreen}{rgb}{0.0, 0.3, 0.13}
\newcommand{\B}{\mathbb{B}}
\newcommand{\R}{\mathbb{R}}
\newcommand{\N}{\mathbb{N}}
\newcommand{\bS}{\mathbb{S}}
\newcommand{\cD}{\mathcal{D}}
\newcommand{\cC}{\mathcal{C}}
\newcommand{\cK}{\mathcal{K}}
\newcommand{\cN}{\mathcal{N}}
\newcommand{\cR}{\mathcal{R}}
\newcommand{\oell}{\overline{\ell}}
\newcommand{\cQ}{\mathcal{Q}}
\newcommand{\cS}{\mathcal{S}}
\newcommand{\cU}{\mathcal{U}}
\newcommand{\cV}{\mathcal{V}}
\newcommand{\oV}{\overline{V}}
\newcommand{\oz}{\overline{z}}
\newcommand{\virgolette}[1]{``#1''}
\newcommand{\wt}{\widetilde}
\newcommand{\sw}{\textnormal{sw}}
\newcommand{\cPd}{\mathcal{PD}}
\DeclareMathOperator{\Sym}{Sym}
\DeclareMathOperator{\cl}{cl}
\DeclareMathOperator{\Mm}{\mathbf{Mm}}
\DeclareMathOperator{\Mmq}{\mathbf{Mmq\,}}
\DeclareMathOperator{\co}{co}
\DeclareMathOperator{\inn}{int}
\DeclareMathOperator{\rank}{Rank}
\DeclareMathOperator*{\argmax}{arg\,max}
\begin{document}

\title{Max-Min Lyapunov Functions for Switched Systems \\ and Related Differential Inclusions}
\author{Matteo Della Rossa$^\ast$ \and Aneel Tanwani$^\ast$ \and Luca Zaccarian\thanks{M. Della Rossa, A. Tanwani and L. Zaccarian are with LAAS-CNRS, University of Toulouse (31400), France. L. Zaccarian is also with Dipartimento di Ingegneria Industriale, University of Trento, Italy. Corresponding author: {\tt mdellaro@laas.fr}. \newline This work was supported by the ANR project {\sc ConVan} with grant number ANR-17-CE40-0019-01.}}

\date{}
\maketitle


\begin{abstract}
Starting from a finite family of continuously differentiable positive definite functions, we study conditions under which a function obtained by {\em max-min} combinations is a Lyapunov function, establishing stability for two kinds of nonlinear dynamical systems: a) Differential inclusions where the set-valued right-hand-side comprises the convex hull of a finite number of vector fields, and b) Autonomous switched systems with a state-dependent switching signal.  We investigate generalized notions of directional derivatives for these max-min functions, and use them in deriving stability conditions with various degrees of conservatism, where more conservative conditions are numerically more tractable. The proposed constructions also provide nonconvex Lyapunov functions, which are shown to be useful for systems with state-dependent switching that do not admit a convex Lyapunov function. Several examples are included to illustrate the results.
\end{abstract}

\section{Introduction}
Lyapunov functions play an instrumental role in the stability analysis of dynamical systems;
The textbook \cite{khalil2002nonlinear} and the research monographs \cite{BaccRosi05} and \cite{MaliMaze09} provide an overview of the developments in this field. 
When considering dynamical systems resulting from switching among a finite number of dynamical subsystems described by ordinary differential equations (ODEs) of the form $\dot x = f_i(x)$, $f_i(0) = 0$, $f_i:\R^n \to \R^n$ locally Lipschitz continuous, $i\in \{1,2, \dots, M\}$, different constructions of Lyapunov functions are proposed in the literature to analyze stability of the common equilibrium point: the origin.
Overviews of such methods and related references can be found in \cite{liberzon}, \cite{ShorWirt07}, and \cite{LinAnts09}.

When the evolution of state trajectories results from {\em arbitrary switching } among the individual subsystems, the stability analysis problem is equivalently addressed by considering the differential inclusion (DI), described by
\begin{equation}
\label{eq:sysDI}
\begin{aligned}
\dot x \in \co \bigl \{f_i(x) \, \vert \; i \in \{1, \dots, M\}\bigr\},
\end{aligned}
\end{equation}
where $\co\{S\}$ denotes the convex hull of the set $S$. 
For the \emph{linear} differential inclusion (LDI) case (that is $f_i(x)=A_ix$ for some $A_i \in \R^{n\times n}$), it is shown in \cite{dayawansa}, \cite{molcha} that asymptotic stability is equivalent to the existence of  a common Lyapunov function that is convex, homogeneous of degree 2, and $\mathcal{C}^1(\R^n,\R)$. By addressing a similar question, the paper \cite{MasoBosc06} establishes the existence of a common homogeneous polynomial Lyapunov function for asymptotically stable LDIs.
Various parameterizations can approximate such homogeneous convex functions, such as maximum of quadratic functions and its convex conjugates \cite{goebel2}, \cite{goebel}, which are shown to be universal in \cite{HuBlanchini2010}. Constructions involving functions with convex polyhedral level sets are proposed in \cite{molcha} and in \cite{BlanMian08}. These functions are mostly locally Lipschitz but not continuously differentiable, therefore the notion of set-valued derivatives, studied in \cite[Chapter 2]{clarke3}, \cite{bacciotti99}, is important. Results analyzing nonsmooth Lyapunov functions using such notions of derivatives appear in \cite{ceragioli}, \cite{TeelPral00a}, \cite[Chapter 4]{clarke1}. 
For general differential inclusions, converse Lyapunov theorems are proved in \cite{TeelPral00b}.
Other techniques for constructing common Lyapunov functions come from imposing strong structural assumptions, such as commuting vector fields \cite{NareBala94}, triangular structure or solvability/nil-potency of the Lie-algebra generated by $\{f_i\}_{i = 1}^m$ \cite{LibeHesp99}.
Without any structural conditions, the Lyapunov functions for system~\eqref{eq:sysDI} are, in general, not finitely constructible.

 On the other hand, for discrete-time systems with arbitrary switching, some constructions involving combinatorial methods have recently appeared. Path-complete Lyapunov functions are proposed in \cite{ahmadi} to approximate the joint spectral radius, and it is shown in \cite{angeli} that this class of functions can be written more explicitly in the form of maximum and minimum over a set of smooth functions. Our conference paper \cite{dellarossa18} uses the construction based on max-min of smooth functions to study stability of the continuous-time system \eqref{eq:sysDI} using Clarke's derivative, and this article provides new results in this direction.


In contrast to studying stability uniformly over all possible switching signals as in \eqref{eq:sysDI}, it is also of interest to study dynamical systems driven by a {\em given switching function} $\sigma: \R^n \to \{1, \dots, M\}$, resulting in 
\begin{equation}\label{switching}
\vspace{-0.13cm}
\dot x = f_{\sigma(x)} (x),
\end{equation}
so that the solution set for system~\eqref{switching} is a strict subset of the solution set of system~\eqref{eq:sysDI}.
As already mentioned, existence of a convex Lyapunov function is necessary for asymptotic stability of LDIs. However, it is possible that system \eqref{switching} is asymptotically stable with $\sigma$ fixed, but does not admit a convex Lyapunov function \cite{blanchini}.
It is possible to provide sufficient conditions for a minimum of quadratics (clearly non-convex) to be a Lyapunov function in this context, see \cite{hu} and \cite{xie}.
In general, constructions involving piecewise quadratic functions have been found quite useful \cite{johansson}, and LMI-based formulations have been proposed to compute such functions \cite{DecaBran00}, \cite{PettLenn02}. Beyond piecewise quadratics, sum-of-squares techniques have been used for polynomial Lyapunov functions~\cite{PapaPraj09}.


 
In this article, the problem of interest is to construct a Lyapunov function for systems~\eqref{eq:sysDI} and \eqref{switching} which guarantees asymptotic stability of the origin $\{0\} \subset \R^n$.
We consider the Lyapunov functions obtained by taking the maximum, minimum, or their combination over a finite family of continuously differentiable positive definite functions, see Definition~\ref{def:MaxMin} for details. Such {\em max-min} type of Lyapunov functions were recently proposed in the context of discrete-time switching systems \cite{ahmadi}, \cite{angeli}. For the continuous-time case treated in this paper, studying this class of functions naturally requires certain additional tools from nonsmooth and set-valued analysis, and one such fundamental tool is the generalized directional derivative. In our conference paper \cite{dellarossa18}, we provide stability results based on Clarke's notion of generalized directional derivative for max-min functions. The construction of non-smooth Lyapunov functions for system \eqref{switching} using the Clarke's generalized gradient concept is also presented in  \cite{BaiGru12}. However, this notion turns out to be rather conservative as is seen in several examples (including the one given in Section~\ref{sec:example}). To overcome this conservatism due to Clarke's generalized derivative, we work with the {\em set-valued Lie derivative}, which is formally introduced in Definition~\ref{def:geneder}.
Focusing on this latter notion of generalized directional derivative for the class of max-min Lyapunov functions, the major contributions of this paper are listed as follows:
\begin{itemize}[leftmargin=*]
\item Describe max-min functions and study generalized notions of set-valued derivatives for such functions.
\item Provide stability results for systems \eqref{eq:sysDI} and \eqref{switching} using the set-valued Lie derivative.
\item Obtain stability conditions using matrix inequalities for the case of linear vector fields in \eqref{eq:sysDI} and \eqref{switching}, and Lyapunov functions obtained by max-min of quadratics.
\end{itemize}

The notion of set-valued Lie derivative was introduced in \cite{bacciotti99} for locally Lipschitz $\emph{regular}$ functions. In the context of stability analysis of a differential inclusion, the set-valued Lie derivative was used in \cite{KamRosPar19} to identify
and remove infeasible directions from the differential inclusion. For the {\em max-min} candidate Lyapunov functions studied in this paper, which are not regular in general, we compute set-valued Lie derivatives and use them to derive stability conditions for systems \eqref{eq:sysDI} and \eqref{switching}. The resulting conditions turn out to be less conservative than the ones obtained by using Clarke's derivative in~\cite{dellarossa18}, which are here recovered as a corollary.
When restricting the attention to the linear case $f_i(x) =A_i x$, and max-min functions obtained from quadratic forms, the Lie-derivative conditions require solving nonlinear matrix inequalities.

It should be noted that, since we allow for the minimum operation in the construction, certain elements in our  proposed class of Lyapunov functions are nonconvex. 
In our approach, when we construct a homogeneous of degree 2 nonconvex Lyapunov function for the LDI problem, a convexification of such functions also provides a Lyapunov function \cite[Proposition 2.2]{goebel}. In fact, the sublevel sets of max-min functions approximate the convex sublevel sets of a homogeneous of degree 2 convex Lyapunov function (which is known to exist) with nonconvex sets obtained via intersections and unions of ellipsoids.

 When addressing  system \eqref{switching}, our approach provides a more general class of nonconvex and nondifferentiable Lyapunov functions obtained via max-min operations. To describe the solutions of switched  systems, we adopt Filippov regularizations \cite{filippov1988differential}, and establish stability conditions for the resulting system. Considering such regularized differential inclusions for the switched systems also allows considering {\em sliding motions} along the switching surfaces.  In this setting, our adopted notion of  set-valued Lie derivative turns out to be crucial and has an interesting geometrical interpretation in terms of the tangent subspace to the switching surface.


The paper is organized as follows: In Section \ref{sec:example} we provide an example of a two-dimensional switched system that does not admit a convex Lyapunov function, but a max-min Lyapunov function can be found.
In Section \ref{sec:preliminaries} we introduce generalized notions of derivatives for Lipschitz continuous functions, while in Section \ref{sec:maxmin} the class of \emph{max-min}\ functions is presented and we show our main stability results in the setting of differential inclusions. In Section \ref{sec:swSys} we apply our results to switched systems, written as a differential inclusion using Filippov regularizations, and we study asymptotic stability along with an instructive example. In Section \ref{sec:linear}, we analyze deeply the case of \emph{linear} switched systems and propose an algorithmic procedure to construct max-min Lyapunov functions, followed by some concluding remarks in Section~\ref{sec:conc}.

\section{A Motivating Example}\label{sec:example}

 We consider a switched system for which there does not exist any convex Lyapunov function. However, this system is asymptotically stable and our results will allow constructing a Lyapunov function $V$ defined as
 \begin{equation}\label{eq:exShieldLyap}
 V(x) := \max\left\{\min\{x^\top P_1 x, x^\top P_2 x\}, x^\top P_3 x \right\},
\end{equation}
for some positive definite matrices $P_i \in \R^{2 \times 2}$, $i = 1,2,3$.
This example was introduced in \cite{dellarossa18} where we did not include the proof of Proposition \ref{prop:notexistconv}, given below.

\begin{example}\label{shieldexample}
Consider a linear switched system as in \eqref{switching},  with three subsystems and a state-dependent switching rule $x \mapsto \sigma(x) \in \{1,2,3\}$, namely
\begin{equation}\label{eq:exShieldSys}
\dot x =A_{\sigma(x)}x
\end{equation}
where 
$
(A_1, A_2,A_3)= \left(\begin{bsmallmatrix}
    -0.1 & 1 \\
    -5 & -0.1
  \end{bsmallmatrix},
\begin{bsmallmatrix}
    -0.1 & 5 \\
    -1 & -0.1
  \end{bsmallmatrix},
\begin{bsmallmatrix}
   1.9 & 3 \\
    -3 & -2.1
    \end{bsmallmatrix}\right).
$
To define the switching signal $\sigma$, introduce matrices
\begin{equation*}
(Q_1, Q_2,Q_3)\!:= \! \left(\begin{bsmallmatrix}
   -(1+\sqrt{2})  & -\frac{2+\sqrt{2}}{2} \\
-\frac{2+\sqrt{2}}{2} & -1
  \end{bsmallmatrix},
\begin{bsmallmatrix}
   \frac{-1}{(1+\sqrt{2}) } & -\frac{\sqrt{2}}{2} \\
-\frac{\sqrt{2}}{2} & -1
  \end{bsmallmatrix},
\begin{bsmallmatrix}
    1 & \sqrt{2}\\
\sqrt{2} & 1 
\end{bsmallmatrix}\right)
\end{equation*} 
and the switching signal
\begin{equation}\label{eq:signal}
\sigma(x):=\begin{cases} 
1, \hskip0.5cm \text{if }\; x \in\mathcal{S}_1:=\{x^\top Q_1x > 0\} \cup \cS_{13}, \\
2,\hskip0.5cm \text{if } \;x \in\mathcal{S}_2:=\{x^\top Q_2x > 0\} \cup \cS_{21},\\
3, \hskip0.5cm \text{if }\;x\in\mathcal{S}_3:=\{ x^\top Q_3x > 0\} \cup \cS_{32},
\end{cases}
\end{equation}
 where the subspaces $\cS_{ij}$, $i \neq j$ are defined as
 $
 \cS_{ij} := \{x \in \R^2 \, \vert \, x^\top Q_i x = x^\top Q_j x\}
 $
, namely
 \begin{equation*}
 \begin{aligned} 
 \cS_{13}&:= \left\{x \in \mathbb{R}^2 \, \vert \, x_2=-(1+\sqrt{2})x_1\right\},\\
 \cS_{21}&:=\left\{\;x \in \mathbb{R}^2 \, \vert \, x_2=-x_1\right\},  \\
\cS_{32}&:= \left\{x \in \mathbb{R}^2 \, \vert \, x_2=-\frac{1}{1+\sqrt{2}}x_1\right \}.\\
\end{aligned}
\end{equation*}
We note that in \eqref{eq:signal}, we have $\mathcal{S}_1 \cup \mathcal{S}_2\cup \mathcal{S}_3= \R^2$ and that the only point of intersection among the three sets is the origin.
\begin{prop}\label{prop:notexistconv}
There does not exist a convex Lyapunov function for system~\eqref{eq:exShieldSys}.
\end{prop}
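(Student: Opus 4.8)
Suppose, towards a contradiction, that $V$ is a convex Lyapunov function for system~\eqref{eq:exShieldSys}, understood (as in Section~\ref{sec:swSys}) with respect to its Filippov regularization $\dot x\in F(x)$. Fix $c>0$ and set $K:=\{x\in\R^2 : V(x)\le c\}$; positive definiteness, radial unboundedness and convexity of $V$ make $K$ a compact convex body with $0\in\inn K$. Decrease of $V$ along Filippov solutions forces a transversality condition on $\bd K$: for every $x\in\bd K$, every outer normal $p$ of $K$ at $x$, and every $i$ with $x\in\cl(\cS_i)$ one has $\langle p,A_i x\rangle\le 0$; in particular, at a switching ray $x\in\cS_{ij}\cap\bd K$ the entire Filippov velocity set $\co\{A_i x,A_j x\}$ must point into $K$, so $\langle p,A_i x\rangle\le 0$ and $\langle p,A_j x\rangle\le 0$. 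Since each $A_i$ has the complex eigenvalues $-0.1\pm\sqrt5\,i$, the vector $A_i x$ is never a real multiple of $x$, so these inequalities genuinely pull $p$ off the radial direction. (One may also homogenize $V$ to degree two and symmetrize it, using that $\sigma$ is constant along rays and $\sigma(-x)=\sigma(x)$, but the argument does not require this.)

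The point is that the transversality constraint depends on $x$ only through its direction. Write $x(\theta)$ for the unique boundary point of $K$ in the direction $\theta$ and let $p(\theta)$ be an outer unit normal of $K$ there. Existence of $K$ (convex, $0\in\inn K$) requires that $\theta\mapsto p(\theta)$ be monotone nondecreasing in angle with total increment exactly $2\pi$ over one turn (up to reparametrization it is the Gauss map of $K$) and that $\langle p(\theta),(\cos\theta,\sin\theta)\rangle>0$ for all $\theta$. As $\theta$ runs once around, $x(\theta)$ visits the switching regions in the cyclic order $\cS_3,\cS_1,\cS_2,\cS_3,\cS_1,\cS_2$, where the $\cS_3$-arcs have angular width $135^\circ$ and the four remaining arcs have width $22.5^\circ$, the widths being fixed by the three lines $\cS_{13},\cS_{21},\cS_{32}$ (at $-67.5^\circ$, $-45^\circ$, $-22.5^\circ$ modulo $180^\circ$). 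On the arc contained in $\cS_i$ the admissible normal $p(\theta)$ is confined to the open half-circle of directions making an obtuse angle with $A_i\,x(\theta)$, and on a switching ray it is confined to the intersection of the two such half-circles for the two adjacent modes.

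To finish, I would compute the direction of $A_i(\cos\theta,\sin\theta)$ along the six arcs, thereby tracking how the admissible half-circle for the normal moves as $\theta$ increases, and show that no monotone, winding-number-one selection $\theta\mapsto p(\theta)$ can stay inside these sets: either the forced monotone increase of the normal angle accumulates to strictly more than $2\pi$ over one turn, or on one of the two $\cS_3$-arcs the admissible half-circle rotates backwards (relative to $\theta$) fast enough to be incompatible with monotonicity of $p(\cdot)$. This incompatibility --- not any deficiency of the individual subsystems, each of which is Hurwitz --- is exactly what the choice of $(A_1,A_2,A_3)$ together with the separating lines $\cS_{13},\cS_{21},\cS_{32}$ is engineered to produce, and it carries essentially all of the computational content of the statement.

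The main obstacle is this quantitative bookkeeping: one must evaluate the angle of $A_i(\cos\theta,\sin\theta)$ on each arc, handle the possibly nonsmooth structure of $\bd K$ --- in particular a corner of $K$ sitting on a switching ray, where \emph{every} normal in the corner's wedge must be obtuse to both $A_i x$ and $A_j x$, which bounds the corner opening by $\pi-\angle(A_i x,A_j x)$ --- and combine these local bounds into a contradiction with the identity ``total turning of $\bd K$ equals $2\pi$''. Secondary points that need care are the reduction of ``$V$ decreases along Filippov solutions'' to the stated transversality inequalities (including along sliding motions on the lines $\cS_{ij}$) and verifying that the transversality so obtained is strict on a full-measure subset of $\bd K$, which is what actually excludes the borderline monotone selections.
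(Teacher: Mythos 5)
Your strategy is genuinely different from the paper's. The paper does not argue infinitesimally on the boundary of a sublevel set at all: it invokes the criterion of \cite[Lemma 2.1]{blanchini} --- if some compact set $\cR_0\neq\{0\}$ satisfies $\cR_0\subset\co\{\cC(T;\cR_0)\}$ for some $T>0$, then no convex Lyapunov function can exist, since convexity would give $\max_{\cR_0}V\le\max_{\co\{\cC(T;\cR_0)\}}V=\max_{\cC(T;\cR_0)}V<\max_{\cR_0}V$ --- and then verifies this hypothesis for the explicit segment $\cR_0=\{\alpha z_0:\alpha\in[0,1]\}$, $z_0=[-1~1]^\top$, by computing the three matrix exponentials, following the trajectory through one half-turn (norm contraction factor $0.8961$), and checking that $z_0$ is the midpoint of two explicitly reachable points. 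Everything is reduced to a finite, checkable computation.

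Your route (a Gauss-map/normal-cone obstruction on the boundary of a sublevel set) could in principle work, since for these homogeneous dynamics a convex Lyapunov function exists essentially if and only if some convex body is crossed inward by the Filippov field; but as written there are two genuine gaps. First, and decisively, the quantitative step that you yourself identify as carrying ``essentially all of the computational content'' --- evaluating the direction of $A_i(\cos\theta,\sin\theta)$ on the six arcs and showing that no monotone, winding-number-one selection of normals exists --- is never carried out. Nothing in the sketch certifies that these particular matrices produce the claimed incompatibility; the same sketch would read verbatim for a nearby system that \emph{does} admit a convex Lyapunov function, so no contradiction has actually been derived. Second, the transversality condition you impose at switching rays is stronger than what the Lyapunov decrease condition yields. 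A solution crossing $\cS_{ij}$ transversally spends zero time on the ray, so the decrease condition constrains nothing there directly; by a limiting argument one only obtains $\langle p_i,A_ix\rangle\le0$ for the extreme normal $p_i$ arising as a limit of normals from the $\cS_i$ side, and $\langle p_j,A_jx\rangle\le0$ for the limit from the $\cS_j$ side --- not ``$\langle p,A_ix\rangle\le0$ for every outer normal $p$ and every $i$ with $x\in\cl(\cS_i)$.'' Your bound $\pi-\angle(A_ix,A_jx)$ on the opening of a corner sitting on a switching ray --- precisely the place where a jump of the normal could rescue a monotone selection --- rests on this unjustified stronger statement, so the bookkeeping framework itself needs repair before the missing computation can even be set up.
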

\begin{proof}
Given a set $\cR_0\subset \R^2$ and a time $T>0$, let $\cC(T;\cR_0)$ be the set of reachable points of solutions of system (\ref{eq:exShieldSys}) after time $T$, starting in $\cR_0$, that is,
\[
\cC(T;\cR_0) := \left\{x(t)\in \R^n \vert \, x \text{ solves \eqref{eq:exShieldSys}},\; x(0) \in\cR_0,\; t \geq T\right\}.
\]
Following \cite[Lemma 2.1]{blanchini}, if we show that there exists a compact set $\cR_0\neq \{0\}$ and a $T>0$ such that $\cR_0 \subset \co \{\cC(T;\cR_0)\}$ (where  $\co\{S\}$ is the convex hull of $S$) then the system does not admit a convex Lyapunov function.
\begin{figure}
\begin{center}
\includegraphics[scale=0.8]{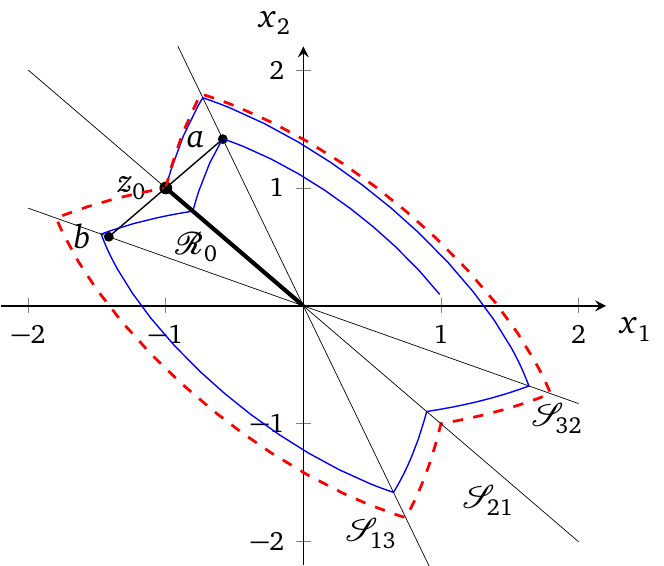}
\caption{The solid blue line shows a trajectory of system (\ref{eq:exShieldSys}) starting at $z_0$ and moving in the clockwise direction. The  red dashed line indicates a level set of the max-min Lyapunov function \eqref{eq:exShieldLyap}. The solid black line indicates  the set $\mathcal{R}_0$ used in the analysis.}
\label{flower2plot}
\end{center}
\end{figure}
Toward this end, we choose $z_0:=[-1~1]^\top\in \mathcal{S}_{21}$, and the compact set $\cR_0 := \{\alpha z_0 : \alpha \in [0,1]\} \subset \mathcal{S}_{21}$, i.e. the line segment connecting $0$ and $z_0$.
We compute
\begin{equation*}
\begin{aligned}
\allowbreak
& e^{A_1t}=e^{-\frac{t}{10}}\begin{bsmallmatrix}
    \cos(\sqrt{5}t) & \frac{\sqrt{5}}{5} \sin(\sqrt{5}t) \\
   - \sqrt{5} \sin(\sqrt{5}t)  &   \cos(\sqrt{5}t)
  \end{bsmallmatrix},\\
&  e^{A_2t}=e^{-\frac{t}{10}}\begin{bsmallmatrix}
    \cos(\sqrt{5}t) &  \sqrt{5}\sin(\sqrt{5}t) \\
   -  \frac{\sqrt{5}}{5}\sin(\sqrt{5}t)  &   \cos(\sqrt{5}t)
  \end{bsmallmatrix},\\
 & e^{A_3t}=e^{-\frac{t}{10}}\begin{bsmallmatrix}
    \frac{2}{\sqrt{5}}\sin(\sqrt{5}t)+\cos(\sqrt{5}t) & \frac{3}{\sqrt{5}}\sin(\sqrt{5}t) \\
   - \frac{3}{\sqrt{5}}\sin(\sqrt{5}t)  &   \cos(\sqrt{5}t)-\frac{2}{\sqrt{5}}\sin(\sqrt{5}t)
  \end{bsmallmatrix},\\
\end{aligned}
\end{equation*}
which allows us to write analytically the solution of the system starting from any given initial condition.
We let $t_1 > 0$ be the smallest time such that
$z_1 := e^{A_1t_1} z_0 \in \cS_{13}$,
and $t_2 > t_1$ be the smallest time such that 
$z_2 := e^{A_3(t_2-t_1)} z_1 \in \cS_{32}$.
We finally choose $t_3> t_2$ as the smallest time such that $z_3:= e^{A_2(t_3-t_2)} z_2 \in \cS_{21}$. It turns out that $|z_3|=1.2671$.
Thus, the half turn, starting with $z_0 \in \cS_{21}$ and reaching $z_3 \in \cS_{21}$, decreases the norm of the state by a factor of $\beta:=\frac{|z_3|}{|z_0|}=0.8961$. Due to the central symmetry of the dynamics (that is, if $x$ is a solution, then $-x$ is also a solution) the solution will reach the set $\mathcal{R}_0$ at the point $\beta^2 [-1 \;1]^\top$ at time $\wt t_3 =2 t_3$. Hence, the set $\mathcal{R}_1:=\{\alpha z_0: \alpha \in [0,\beta^2]\}$ is (strictly) contained in the set $\cC(t_1;\mathcal{R}_0)$. To show that $\cR_0 \subset \co \{\cC(t_1;\cR_0)\}$, it thus remains to check that
\begin{equation}\label{eq:tempInc}
\{\alpha z_0: \alpha \in [\beta^2,1]\} \subset \co\{\cC(t_1;\mathcal{R}_0)\}.
\end{equation}
Property \eqref{eq:tempInc} is graphically illustrated in Figure~\ref{flower2plot} and is proven by the fact that points $a=[\sqrt{2}-2 \;\sqrt{2}]^\top \in \cS_{13}$ and $b=[-\sqrt{2} \; 2-\sqrt{2}]^\top \in \cS_{32}$ satisfy $|a|<|z_1|$ and $|b|<|e^{A_3(t_2-t_1)}e^{A_1t_1}z_3|$, and thus $a,b\in \cC(t_1;\mathcal{R}_0)$, and $z_0=\frac{1}{2}a+\frac{1}{2}b \in \co\{\cC(t_1;\mathcal{R}_0)\}$. Having already shown that $0 \in \co\{\cC(t_1;\mathcal{R}_0)\}$, property \eqref{eq:tempInc} indeed holds.
\end{proof}
In Section~\ref{sec:swSys}, we will study conditions that lead to the construction of a Lyapunov function for state-dependent switched systems. In particular, for the aforementioned example, we will find matrices $P_i > 0$, $i = 1,2,3$  to show that the function $V$ in \eqref{eq:exShieldLyap} is a Lyapunov function.
\end{example}
\section{Generalized Gradients and Directional Derivatives}\label{sec:preliminaries}
The function in \eqref{eq:exShieldLyap} is nonsmooth and requires generalized notions of gradient and directional derivatives, recalled here from \cite[Chapter 2]{clarke3}, \cite{ceragioli}.

Let $F: \R^n \rightrightarrows \R^n$ be an upper semicontinuous\footnote{ A set-valued map $F:\R^n \rightrightarrows \R^n$ is said to be \emph{upper semicontinuous at }$x$ if, for every $\varepsilon>0$ there exists a $\delta >0$ such that if $y \in \B(x,\delta)$ then $F(y) \subset F(x)+\B(0, \varepsilon)$. It is said to be \emph{upper semicontinuous} if it is upper semicontinuous at every $x \in \R^n$. For comparisons with a related notion of {\em outer semicontinuity}, see \cite[Lemma~5.15]{GoebSanf12}.} map with nonempty, compact, convex values, and consider the differential inclusion (resembling dynamics \eqref{eq:sysDI} and \eqref{switching}),
\begin{equation}\label{eq:diffinc}
\dot x \in F(x), \quad x(0) = x_0\in \R^n.
\end{equation}
We recall that a solution of $\eqref{eq:diffinc}$ on an interval $[0,T) \subset \R$ is a function $x:[0,T) \to \R^n$ such that $x(\cdot)$ is absolutely continuous, $x(0) = x_0$, and $\dot x(t) \in F(x(t))$ for almost all $t\in [0,T)$.
In the case $T=+\infty$ the solution is said to be \emph{complete}. 
The origin of \eqref{eq:diffinc} is {\em asymptotically stable} (AS) if it is {\em Lyapunov stable} (for each $\varepsilon >0\;\exists\,  \delta(\varepsilon)>0$ such that all solutions satisfy $|x(0)| < \delta(\varepsilon) \, \Rightarrow  \,|x(t)| < \varepsilon$, $\forall t >0$) and {\em attractive} (there exists  $M>0$ such that solutions satisfying $|x(0)|<M \Rightarrow \lim_{t \to \infty} |x(t)| =0$.)
If attractivity is global (it holds for every $M>0$), then we say that the origin is \textit{globally asymptotically stable} (GAS). We are only concerned with stability of the origin in this article, and use the statement that a system is (G-)AS to refer to the stability of the origin for the corresponding system.
Given an open and connected set $\cD\subset \R^n$ such that $0\in \cD$, we say that a locally Lipschitz function  $V:\cD\to \R$ is a \emph{Lyapunov function} for~\eqref{eq:diffinc} if  there exist class $\cK$ functions\footnote{A function $\alpha:\R_{\geq 0}\to \R$ is \emph{positive definite} ($\alpha\in\cPd$) if it is continuous, $\alpha(0)=0$, and $\alpha(s)>0$ if $ s\neq 0$. A function $\alpha:\R_{\geq 0}\to \R_{\geq 0}$ is \emph{class $\cK$} ($\alpha \in \cK$) if it is continuous, $\alpha(0)=0$, and strictly increasing. It is said to be \emph{class $\cK_\infty$} if it is class $\cK$ and unbounded.}
 $\underline{\chi},\overline{\chi}$, and a positive definite $\gamma\in \cPd$
 such that
\[
\underline{\chi}(|x|)\leq V(x)\leq \overline{\chi}(|x|),\;\;\forall x\in \cD,
\] 
and there exists a $\delta >0$ such that given any solution $x:[0,T)\to \cD$ of \eqref{eq:diffinc} with $|x(0)|<\delta$, we have
\[
\frac{d}{dt}V(x(t))\leq -\gamma(|x(t)|), \;\;\text{for almost every } t\in [0,T).
\]
The existence of a Lyapunov function implies the asymptotic stability of system \eqref{eq:diffinc}.
If moreover $\cD=\R^n$, $\underline{\chi},\overline{\chi}\in \cK_\infty$ and $\delta$ can be arbitrarily large, the existence of such a $V$ implies global asymptotic stability of \eqref{eq:diffinc}, see \cite[Chapter 4]{khalil2002nonlinear}.
Given an open set $\cD\subset \R^n$ and a locally Lipschitz function $V:\cD\to \R$ we first consider the Clarke's generalized gradient $x \mapsto \partial V(x)$ \cite[Chapter 2]{clarke3}, which, due to the equivalence in \cite[Theorem 2.5.1, page 63]{clarke3} can be defined as
\begin{equation}\label{eq:limClark}
\partial V(x):= \co \left\{v\in \R^n\,{\Big \vert}\begin{aligned} \,  &\exists\, x_k \to x, \;x_k \notin \cN_V, \text{ s.t.} \\ &v=\lim_{k \to \infty} \nabla V(x_k)\end{aligned} \right\},
\end{equation}
 where $\cN_V \subset \R^n$ is the set of  measure zero where $\nabla V$ is not defined. \cite[Theorem 2.5.1, page 63]{clarke3} proves the existence of at least one sequence $x_k$ as considered in~\eqref{eq:limClark}, namely $\partial V(x)\neq \emptyset$, for all $x \in \cD$. 
  Moreover, the following property of locally Lipschitz functions will be used in what follows.
  \begin{defn}\label{def:Regular}
Given an open set $\cD\subset \R^n$, a locally Lipschitz function $V:\cD\to \R$ is \emph{regular} at $x\in \cD$ if, for every $v\in \R^n$, the directional derivative
$
V'(x;v):=\lim_{h\to 0^+}\frac{V(x+hv)-V(x)}{h}
$ exists and the equality
\begin{equation}\label{eq:regularity}
V'(x;v)=\max\left\{w^\top v\;\vert\;w\in \partial V(x)\right\},\;\;\;\forall v\in \R^n,
\end{equation}
holds. $V$ is called \emph{regular} if it is regular at each $x\in \cD$.
  \end{defn}
Definition~\ref{def:Regular} is in fact a characterization of regularity for locally Lipschitz functions, which follows from~\cite[Proposition~2.1.2]{clarke3}. For an alternative definition we refer to~\cite[Definition 2.3.4]{clarke3}. The right-hand side of~\eqref{eq:regularity} is also called the \emph{Clarke's generalized directional derivative} of $V$ at $x$ along $v$ (denoted by $V^0(x,v)$ and defined in~\cite[Section 2.1]{clarke3}). The results of this paper could be equivalently stated by referring to Clarke's generalized directional derivatives instead of the Clarke's generalized gradient in~\eqref{eq:limClark}, but we believe that the gradient is a more familiar concept in the control community.

We now introduce two different notions of the generalized directional derivative with respect to differential inclusions~\eqref{eq:diffinc}.
 We will show that the first one, the more \virgolette{natural} one, leads to more conservative stability results than the second one. In particular the second one is needed for proving GAS of the motivating example introduced in Section~\ref{sec:example}.

\begin{defn}[\cite{bacciotti99, cortes}]\label{def:geneder}
Consider the differential inclusion \eqref{eq:diffinc}; given an open set $\cD\subset \R^n$ and a locally Lipschitz function $V:\cD \to \R$. Given $x\in \cD$, the \emph{Clarke's generalized derivative} of $V$ with respect to $F$ is defined as
\begin{equation}\label{eq:oldgenderiv}
\dot V_F(x) := \{p^\top f \; | \; p \in \partial V(x),\, f \in F(x) \, \}.
\end{equation}
Additionally, we define the \textit{set-valued Lie derivative} of $V$ with respect to $F$ as
\begin{equation}\label{eq:genderiv}
\dot{\overline{V}}_F(x):=\{ a \in \R \;|\; \exists f \in F(x): p^\top f=a, \, \forall p \in \partial V(x) \}.
\end{equation}
\end{defn}
In the case where $V$ is continuously differentiable at~$x$, one has $\partial V(x)=\{\nabla V(x)\}$ and $\dot{\overline{V}}_F(x)=\dot V_F(x)=\{\nabla V(x)^\top f \, \vert \, f \in F(x) \}$.
Moreover, it is clear that
\begin{equation}\label{eq:setdiffinclusion}
\dot{\overline{V}}_F(x) \subset \dot V_F(x).
\end{equation}
In fact, given $a \in \dot{\overline{V}}_F(x)$, there exists $f \in F(x)$ such that $a=p^\top f$, for all $p \in \partial V(x)$ and thus in particular $a \in \dot V_F(x)$.
Intuitively, it means that when defining  $\dot{\overline{V}}_F(x)$ we do not consider \emph{every} possible scalar product between vectors of $\partial V(x)$ and $F(x)$, rather we only consider  directions $f \in F(x)$ that are  \virgolette{meaningful} in the sense of possible flowing directions of solutions.
Recalling that the Euclidean scalar product $\langle \cdot, \cdot \rangle$ is bilinear in its arguments and, for each $v \in \R^n$,  $\langle v,\cdot \rangle$ is continuous, it can be shown that, for each fixed $x \in \cD$,  $\dot{\overline{V}}_F(x)$ and $\dot V_F(x)$ are compact intervals, possibly empty.
Concluding this section, we illustrate the differences between the different notions of set-valued derivatives in the following example.
\begin{example}\label{ex:OneDimExample}
Consider the function $V:\R \to \R$ defined as $V(x)=|x|$, which is differentiable everywhere except at $0$ and Lipschitz continuous. 
From~\eqref{eq:limClark}, Clarke's generalized gradient at 0 is
$\partial V(0)=[-1,1]$.
Now let us suppose that a set-valued map $F:\R \rightrightarrows \R$ is given such that  $F(0):=[f_1, f_2] \subset \R$. Using~\eqref{eq:oldgenderiv}, we compute
\begin{equation*}
\begin{aligned}
\dot V_F(0)&=\{pf \;\vert \; p \in [-1,1],\, f \in [f_1,f_2]\}\\
           &=\left [-\max\{|f_1|, |f_2|\}, \max\{|f_1|,|f_2|\} \right].
\end{aligned}
\end{equation*}
On the other hand, using~\eqref{eq:genderiv} and noting that  $p_1f = p_2f$ for each $p_1,p_2 \in [-1,1]$ if and only if $f=0$,
we get
\begin{equation*}
\dot{\overline{V}}_F(0)=\begin{cases}
\{0\} \; &\text{if} \;0 \in [f_1,f_2],\\
\emptyset \;\; &\text{if}\; 0 \notin [f_1,f_2].
\end{cases}
\end{equation*}
It is easily verified that $\dot{\overline{V}}_F(0)$ is a subset of $\dot V_F(0)$. 
\end{example}
\section{Stability Using Max-Min Functions}\label{sec:maxmin}
In this section, we use the generalized derivatives to study a particular class of locally Lipschitz Lyapunov functions establishing sufficient stability conditions for system~\eqref{eq:sysDI}.

\subsection{Max-Min Functions}
The following definition was introduced by \cite{angeli} in the context of path-complete Lyapunov functions for discrete time switching systems. 
\begin{defn}\label{def:MaxMin}
Consider an open and connected set $\cD\subset \R^n$. Given $K$ \emph{base} functions $V_1, \dots, V_K \in \mathcal{C}^1(\cD,\mathbb{R})$, a {\em max-min function} $V_{\Mm} : \cD \to \R$ is either defined as
 \begin{subequations}\label{eq:MaxMin/MinMax}
\begin{align}\label{eq:Mmm}
V_{\Mm}(x)&:=\max_{j \in \{1, \dots, J\}} \left \{\min_{k \in S_j} \{V_k(x)\} \right \},
\end{align}
for some $J\geq1$ and nonempty sets $S_1, \dots, S_J\subset \{1, \dots, K\}$, or
\begin{align}
V_{\Mm}(x) &=\min_{j \in \{1, \dots, J^\star\}} \left \{\max_{k \in S^\star_j} \{V_k(x)\} \right\},\label{eq:MinMax}
\end{align}
\end{subequations}
for some $J^\star \geq1$ and nonempty sets $S^\star_1, \dots, S^\star_{J^\star} \subset \{1, \dots, K\}$.
\end{defn} 
The following proposition states the equivalence between \eqref{eq:Mmm} and \eqref{eq:MinMax}, which is obtained by applying the distrubutive property of the $\max$ $\min$ operators. For a formal proof we refer to~\cite{Ovc10} and references therein. In the sequel, all our derivations apply to both equivalent expressions \eqref{eq:Mmm} and~\eqref{eq:MinMax} but for definiteness, we use the notation adopted in \eqref{eq:Mmm}.

\begin{prop}\label{prop:Equivalence}
Given $J\geq 1$ (resp. $J^\star\geq 1$), and $S_1,\dots S_J$ (resp. $S^\star_1,\dots, S^\star_{J^\star}$) nonempty subsets of $\{1,\dots, K\}$, there exists $J^\star\geq 1$ (resp. $J$)  and nonempty subsets $S^\star_1,\dots,S^\star_{J^\star}$ (resp. $S_1,\dots S_J$) of  $\{1,\dots, K\}$ such that expressions \eqref{eq:Mmm} and \eqref{eq:MinMax} coincide, for all $x\in\cD$, and  for any  $V_1,\dots ,V_K\in \cC^1(\cD,\R^n)$.
\end{prop}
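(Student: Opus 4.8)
The plan is to establish the equivalence as a purely combinatorial/lattice-theoretic identity, valid pointwise for each fixed $x \in \cD$, that holds for arbitrary real numbers $a_1, \dots, a_K$ (the values $V_1(x), \dots, V_K(x)$) and does not use any property of the base functions beyond their being real-valued. In other words, I would reduce the claim to the distributive law in the lattice $(\R, \max, \min)$: since $\max$ and $\min$ distribute over each other, any expression built by finitely many applications of $\max$ and $\min$ to variables $a_1,\dots,a_K$ can be rewritten in a "disjunctive normal form" $\max_j \min_{k \in S_j} a_k$ and, dually, in a "conjunctive normal form" $\min_j \max_{k \in S_j^\star} a_k$. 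I would only prove the direction needed, say converting \eqref{eq:Mmm} into \eqref{eq:MinMax}; the reverse direction follows by the symmetric argument (or by negating, since $\min$ and $\max$ swap under $a \mapsto -a$, though one must be careful that the index set $\{1,\dots,K\}$ is fixed — this is fine because negation acts on values, not indices).

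First I would fix $x$ and write $a_k := V_k(x)$, so that the left side of the desired identity is $m := \max_{j \in \{1,\dots,J\}} \min_{k \in S_j} a_k$. The key step is the following selection/distribution lemma: for nonempty finite sets $S_1, \dots, S_J \subseteq \{1,\dots,K\}$,
\begin{equation*}
\max_{j=1}^{J} \min_{k \in S_j} a_k \;=\; \min_{\phi} \; \max_{j=1}^{J} a_{\phi(j)},
\end{equation*}
where the outer $\min$ ranges over all \emph{choice functions} $\phi$ assigning to each $j \in \{1,\dots,J\}$ an index $\phi(j) \in S_j$. This is the standard $\max$–$\min$ distributive identity: expanding $\max_j \min_{k \in S_j} a_k$ by repeatedly pulling $\min$ outside $\max$ produces exactly a $\min$ over all "transversals" of the family $\{S_j\}$. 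To turn this into the form \eqref{eq:MinMax}, I would then set $J^\star$ to be the number of distinct choice functions $\phi$ (a finite number, at most $\prod_j |S_j|$), enumerate them as $\phi_1,\dots,\phi_{J^\star}$, and define $S^\star_i := \{\phi_i(j) : j \in \{1,\dots,J\}\} \subseteq \{1,\dots,K\}$, which is nonempty since $J \geq 1$. Then $\max_{k \in S^\star_i} a_k = \max_{j=1}^J a_{\phi_i(j)}$, and the lemma gives $m = \min_{i=1}^{J^\star} \max_{k \in S^\star_i} a_k$, which is \eqref{eq:MinMax}. Since the $S_j$ and hence the $S^\star_i$ do not depend on $x$, and the identity holds for every $x \in \cD$ and every choice of $V_1,\dots,V_K \in \cC^1(\cD,\R)$, this proves the proposition. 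For the opposite conversion one applies the dual identity (pulling $\max$ outside $\min$) in exactly the same way.

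I would prove the selection lemma itself by a short induction on $J$. The base case $J=1$ is trivial: $\min_{k \in S_1} a_k$ equals $\min_{\phi} a_{\phi(1)}$ since the only freedom is the choice of $\phi(1) \in S_1$. For the inductive step, write $\max_{j=1}^{J} \min_{k \in S_j} a_k = \max\{\, \max_{j=1}^{J-1}\min_{k\in S_j}a_k,\; \min_{k \in S_J} a_k \,\}$, apply the two-term distributive law $\max\{p, \min_{k \in S_J} a_k\} = \min_{k \in S_J} \max\{p, a_k\}$ with $p = \max_{j=1}^{J-1}\min_{k\in S_j}a_k$, then apply the induction hypothesis to $p$ inside each term and merge the two layers of $\min$ over choice functions on $\{1,\dots,J-1\}$ together with the choice of $\phi(J) \in S_J$. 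The main obstacle — really the only subtlety worth care — is the bookkeeping in this merging step: one must check that the combined outer minimum indeed ranges over \emph{all} choice functions $\phi$ on $\{1,\dots,J\}$ and that each contributes the term $\max_{j=1}^J a_{\phi(j)}$, with no spurious or missing transversals. This is routine but it is where a sloppy argument would go wrong, so I would state it explicitly. (As the excerpt notes, a fully formal treatment along these lines is given in \cite{Ovc10}, so in the paper itself I would likely just cite it and sketch the normal-form argument above.)
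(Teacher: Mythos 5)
Your proof is correct and follows exactly the route the paper indicates: the paper does not prove Proposition~\ref{prop:Equivalence} itself but merely invokes the distributivity of $\max$ and $\min$ and cites~\cite{Ovc10} for a formal argument, and your selection-lemma identity $\max_{j}\min_{k\in S_j}a_k=\min_{\phi}\max_{j}a_{\phi(j)}$ over choice functions is precisely that distributivity argument made explicit, with the sets $S^\star_i$ correctly independent of $x$ and of the base functions. Your induction on $J$ and the duality for the reverse direction are both sound, so this fills in the details the paper leaves to the reference.
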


We denote by $\Mm(V_1, \dots, V_K)$ the set of all the possible max-min functions obtained from $K$ base functions $V_1, \dots, V_K$. 
Given  $V \in \Mm (V_1, \dots, V_K)$, it is noted that at each point $x\in \cD$ where a strict ordering holds between the values of the base functions, that is, $V_{\ell_1}(x) <V_{\ell_2}(x) < \dots <V_{\ell_K}(x)$,
the function value $V(x)$ coincides with $V_{\wt \ell}(x)$, for some $\wt \ell \in \{1, \dots, K\}$.
At points where two or more base functions are equal, the function $V$ may switch between different base functions. For every $\ell\in\{1,\dots K\}$, we may define the set where the function $V_\ell$ is active, more precisely
\begin{equation}\label{eq:actset}
C_\ell:=\{x\in \cD\;\vert\;V(x)=V_\ell(x)\},
\end{equation}
which are closed by continuity of $V, V_1, \dots, V_K$.
We can  associate a mapping with every $V \in \Mm(V_1, \dots, V_K)$. This map is useful to characterize the generalized derivatives introduced in Definition~\ref{def:geneder}.
\begin{defn}[Essentially-active index map]\label{def:alpha}
Given a function $V\in \Mm(V_1, \dots, V_K)$,  the corresponding {\em essentially-active index map} $\alpha_V: \cD \rightrightarrows \{1, \dots, K\}$ is defined as
\begin{equation}\label{eq:alpha}
\alpha_V(x):=\big\{\ell\in\{1,\dots, K\}\;\vert\; x\in \cl(\inn(C_\ell)) \big \},
\end{equation}
where $\cl(C)$ and $\inn(C)$ represent the closure and the interior of a set $C\subset\R^n$, respectively. Indexes $\ell\in \alpha_V(x)$ are called \emph{essentially-active indexes} of $V$ at $x$.
\end{defn}
It will be shown in~Lemma~\ref{lemma:prelimnLemma} that $\alpha_V(x)$ is nonempty, for every $x\in \cD$. Here, instead, we highlight that
\begin{equation}\label{eq:alfainclusion}
\alpha_V(x) \subset\{ \ell \in \{1, \dots, K\} \, \vert \, V(x)=V_{\ell}(x)\},\;\;\forall x\in \cD.
\end{equation}
The set appearing in the right-hand side of inclusion~\eqref{eq:alfainclusion} is called~\emph{active index set} in the context of piecewise $\cC^1$ functions, for example in~\cite{PanRal96} and~\cite[Chapter 4]{Sch12}.
To obtain the inclusion~\eqref{eq:alfainclusion}, consider any $\ell \in \alpha_V(x)$, then from Definition~\ref{def:alpha} and $\cD$ being open, there is a sequence $x_k \to x$ such that $x_k\in \inn(C_\ell)$, $\forall\;k\in \N$. By continuity of $V$ and $V_\ell$, we have
$
V(x)= \lim_{k \to\infty}V(x_k)= \lim_{k \to \infty} V_{\ell}(x_k)= V_{\ell}(x)$.


 We emphasize that, in general, the inclusion in \eqref{eq:alfainclusion} is strict and equality does not necessarily hold. 

 Moreover, given $V\in \Mm(V_1, \dots, V_K)$, the map $\alpha_V:\R^n\rightrightarrows\{1,\dots, K\}$ contains all the necessary information to locally describe the function $V$, as formalized in the following result. 
\begin{lemma}\label{lemma:prelimnLemma}
Consider $V\in \Mm(V_1,\dots, V_K)$. For each $x\in \cD$ the set $\alpha_V(x)$ is non empty and there exists a neighborhood $\cU$ of $x$ such that 
\begin{equation}\label{eq:lemmaImplication}
(z\in \cU)\Rightarrow(\exists\,\ell_z\in \alpha_V(x)\text{ such that }V(z)=V_{\ell_z}(z)).
\end{equation}  
\end{lemma}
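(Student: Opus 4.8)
The plan is to work with a small closed ball $\overline{\B}(x,r)\subset\cD$ and to eliminate from the max-min expression, one by one, the base functions that are irrelevant near $x$, until only essentially-active ones remain. The two claims — nonemptiness of $\alpha_V(x)$ and the local description \eqref{eq:lemmaImplication} — will in fact come out together.

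First I would set up notation: by Proposition~\ref{prop:Equivalence} we may freely use either form, so write $V=\max_{j\le J}\min_{k\in S_j}V_k$. For a given $x$, partition the index set $\{1,\dots,K\}$ according to how $V_k(x)$ compares with $V(x)$; the indices with $V_k(x)<V(x)$ (for those $k$ inside an active $S_j$) and those with $V_k(x)>V(x)$ play no role in a neighborhood of $x$ by continuity, so after shrinking $r$ they can be deleted from the formula — concretely, any $\min_{k\in S_j}$ in which some $k$ has $V_k(x)<V(x)$ can never attain the outer max on $\overline{\B}(x,r)$, and within a surviving $S_j$ any $k$ with $V_k(x)>V(x)$ does not affect the inner min on $\overline{\B}(x,r)$. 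What remains is a max-min expression built solely from base functions $V_k$ with $V_k(x)=V(x)$, and it equals $V$ on $\overline{\B}(x,r)$. This reduces the problem to the case where all surviving base functions agree with $V$ at the center point $x$.

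Next, the core step: show that at least one surviving index $\ell$ satisfies $x\in\cl(\inn(C_\ell))$, and moreover that the surviving indices with this property already suffice to represent $V$ near $x$. Here is where I expect the main obstacle. The natural idea is a Baire-category / dimension argument: on a small ball, the graphs of the distinct surviving base functions intersect in a set that is "thin" — each pairwise coincidence set $\{z : V_k(z)=V_{k'}(z)\}$ is a closed set, and the union of all of them cannot cover a neighborhood of $x$ unless some pair is identically equal there; hence the complement, where a strict ordering holds, is open and dense in the ball. On that open dense set $V$ coincides pointwise with exactly one base function, so the ball is covered (up to the thin coincidence set) by the sets $\inn(C_\ell)$ over surviving $\ell$; since there are finitely many of them and they are, together with the thin set, covering the ball, at least one $\inn(C_\ell)$ must accumulate at $x$, giving $x\in\cl(\inn(C_\ell))$ and hence $\alpha_V(x)\neq\emptyset$. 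The subtlety I need to handle carefully is the possibility that two surviving base functions are identically equal on a neighborhood of $x$ (the coincidence set has nonempty interior): then one of them is redundant and should be removed from the surviving list before running the argument, so I would first quotient out by "identically equal near $x$" and only then invoke the density of strict ordering.

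Finally, for \eqref{eq:lemmaImplication} I would argue that every surviving index is in fact essentially active — or, if not, can be dropped. After the reductions above, each surviving $V_\ell$ agrees with $V$ at $x$; I claim $C_\ell$ has nonempty interior arbitrarily close to $x$, so $\ell\in\alpha_V(x)$: indeed if not, $C_\ell\cap\overline{\B}(x,r)$ has empty interior, meaning $V_\ell$ is strictly dominated (from the relevant side in the formula) on a dense subset, and then continuity lets one delete $\ell$ from the max-min representation on a possibly smaller ball without changing $V$. Iterating the deletion (finitely many indices) terminates with a representation of $V$ on some neighborhood $\cU$ of $x$ using only indices in $\alpha_V(x)$; for any $z\in\cU$, evaluating that reduced max-min formula at $z$ picks out one of its base functions, whose index $\ell_z$ lies in $\alpha_V(x)$ and satisfies $V(z)=V_{\ell_z}(z)$. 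That is exactly the asserted implication.
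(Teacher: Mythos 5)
Your overall architecture (reduce by continuity to the indices with $V_\ell(x)=V(x)$, then argue that the non-essentially-active ones can be discarded) parallels the paper's, but both of your core steps have genuine gaps. First, the claim that the pairwise coincidence sets $\{z: V_k(z)=V_{k'}(z)\}$ are thin unless the two functions are identically equal near $x$ is false for $\cC^1$ functions: take $V_1(z)=|z|^2$ and $V_2(z)=|z|^2+\max(0,z_1)^3$ near $x=0$. They are not identically equal on any neighborhood of $0$, yet their coincidence set $\{z_1\le 0\}$ has nonempty interior, so the strict-ordering region is open but \emph{not} dense, and your proposed ``quotient by local identity'' does not remove this example. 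The nonemptiness of $\alpha_V(x)$ can still be rescued --- apply Baire directly to the finite closed cover $\{C_\ell\cap\B(x,\varepsilon)\}_\ell$ of each small ball and pigeonhole over $\varepsilon\to0$ --- but that is not the argument you wrote.

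Second, and more seriously, the deletion step underlying \eqref{eq:lemmaImplication} is unjustified. From ``$C_\ell$ has empty interior near $x$'' you obtain only that $V\neq V_\ell$ on a dense set; this does not imply that $V_\ell$ is ``dominated from the relevant side'' there. At such a point $V_\ell$ may still attain the inner minimum over some $S_j$ whose minimum loses the outer maximum; removing $\ell$ from that $S_j$ raises $\min_{k\in S_j}V_k$ and can push the outer maximum strictly above $V$ on an open set, so the reduced formula need not equal $V$ on any smaller ball. The paper avoids this trap by never claiming that the reduced formula equals $V$: it proves the weaker pointwise statement directly. Namely, if $\ell_1\notin\alpha_V(x)$, then any point $\bar x$ near $x$ at which $V_{\ell_1}$ is the \emph{only} surviving representative satisfies $V=V_{\ell_1}$ on a whole neighborhood of $\bar x$ (all other candidates being excluded by continuity), hence $\bar x\in\inn(C_{\ell_1})$, which is disjoint from a suitable neighborhood $\cU_1$ of $x$; therefore on $\cU_1$ some other surviving index always represents $V$, and one iterates over the finitely many indices. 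You would need an argument of this type, or an actual proof that your deletion preserves $V$ locally, to close the gap.
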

The proof of Lemma~\ref{lemma:prelimnLemma} is given in Section~\ref{sec:propGrad}.
\subsection{Gradients and Stability Conditions}
The following statement draws connections between Clarke's generalized gradient $\partial V$ and the  set-valued Lie derivative $\dot{\overline{V}}_F$ in \eqref{eq:genderiv} for a generic $V \in \Mm(V_1, \dots, V_K)$, using the mapping $\alpha_V$. 
\begin{prop}\label{lm:gengrad}
Given $V \in \Mm(V_1, \dots, V_K)$ and $x\in \cD$, the following equality holds
\begin{equation}\label{eq:gradConvex}
\partial V(x)=\co\{ \nabla V_\ell(x) \, \vert \, \ell\in \alpha_V(x)\}.
\end{equation}
In particular, given  $F :\R^n \rightrightarrows \R^n$, the Lie derivative in~\eqref{eq:genderiv} reads
\begin{equation}\label{eq:Mmdirder}
\dot{\overline{V}}_F(x) = \{ a \in \R\, \vert \, \exists f \in F(x) : a =\nabla V_\ell(x)^\top f,\, \forall \ell \in \alpha_V(x)\}.
\end{equation}
\end{prop}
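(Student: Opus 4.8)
The plan is to establish the gradient formula \eqref{eq:gradConvex} first, and then note that \eqref{eq:Mmdirder} is an immediate consequence of substituting \eqref{eq:gradConvex} into the definition \eqref{eq:genderiv}. For the inclusion ``$\supseteq$'' in \eqref{eq:gradConvex}, I would take any $\ell \in \alpha_V(x)$ and recall that, by Definition~\ref{def:alpha}, there is a sequence $x_k \to x$ with $x_k \in \inn(C_\ell)$. Since $V$ agrees with $V_\ell$ on an open set around each such $x_k$ and $V_\ell \in \cC^1(\cD,\R)$, the function $V$ is differentiable at $x_k$ with $\nabla V(x_k) = \nabla V_\ell(x_k)$; passing to the limit and using continuity of $\nabla V_\ell$, the vector $\nabla V_\ell(x)$ appears as one of the limits in the defining formula \eqref{eq:limClark} for $\partial V(x)$. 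Taking convex hulls over all $\ell \in \alpha_V(x)$ and using that $\partial V(x)$ is convex yields one inclusion.

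For the reverse inclusion ``$\subseteq$'', the key object is any sequence $x_k \to x$ with $x_k \notin \cN_V$ and $\nabla V(x_k) \to v$. By Lemma~\ref{lemma:prelimnLemma} applied at each $x_k$ — or more directly by a local ``active base function'' argument — each differentiability point $x_k$ lies in the interior of some $C_{\ell(k)}$ (a differentiability point of $V$ at which $V(x_k)=V_{\ell}(x_k)$ and the $V_\ell$ that achieve the max-min value are locally ordered forces $x_k$ into $\inn(C_{\ell(k)})$, otherwise arbitrarily close points would switch base functions and contradict differentiability). Since $\{1,\dots,K\}$ is finite, some index $\ell^\star$ recurs infinitely often; along that subsequence $\nabla V(x_k) = \nabla V_{\ell^\star}(x_k) \to \nabla V_{\ell^\star}(x)$ by continuity, so $v = \nabla V_{\ell^\star}(x)$. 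It remains to check $\ell^\star \in \alpha_V(x)$: this holds because $x_k \in \inn(C_{\ell^\star})$ and $x_k \to x$ give $x \in \cl(\inn(C_{\ell^\star}))$, which is exactly the membership condition \eqref{eq:alpha}. Hence every generating vector $v$ equals some $\nabla V_\ell(x)$ with $\ell \in \alpha_V(x)$, and since the right-hand side of \eqref{eq:gradConvex} is already convex, the convex hull in \eqref{eq:limClark} stays inside it.

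Finally, for \eqref{eq:Mmdirder}, I substitute \eqref{eq:gradConvex} into \eqref{eq:genderiv}: a scalar $a$ lies in $\dot{\overline V}_F(x)$ iff there is $f \in F(x)$ with $p^\top f = a$ for all $p \in \partial V(x) = \co\{\nabla V_\ell(x) \mid \ell \in \alpha_V(x)\}$. Since $p \mapsto p^\top f$ is affine (indeed linear), it is constant on a convex hull iff it is constant on the generating set, so the condition ``$p^\top f = a$ for all $p \in \partial V(x)$'' is equivalent to ``$\nabla V_\ell(x)^\top f = a$ for all $\ell \in \alpha_V(x)$'', which is precisely \eqref{eq:Mmdirder}.

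I expect the main obstacle to be the reverse inclusion in \eqref{eq:gradConvex}, specifically the step showing that an arbitrary differentiability point $x_k$ near $x$ must actually lie in the \emph{interior} of some $C_\ell$ (not merely in $C_\ell$) — this is where the structure of max-min functions and the regularity afforded by Lemma~\ref{lemma:prelimnLemma} really enter, and where a careless argument could wrongly pick up non-essentially-active indices. Everything else (the forward inclusion, the pigeonhole on finitely many indices, and the passage from \eqref{eq:gradConvex} to \eqref{eq:Mmdirder}) is routine.
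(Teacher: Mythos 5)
Your forward inclusion in \eqref{eq:gradConvex} and your derivation of \eqref{eq:Mmdirder} from it are correct, and the latter is exactly what the paper does; for \eqref{eq:gradConvex} itself, however, the paper does not argue directly but invokes the piecewise-$\cC^1$ literature (Pang--Ralph, Lemma~2, and Scholtes, Proposition~4.3.1). Your attempt to prove the hard inclusion ``$\subseteq$'' from scratch contains a genuine gap, and it sits precisely where you predicted the difficulty would be. The step ``each differentiability point $x_k$ lies in $\inn(C_{\ell(k)})$ for some $\ell(k)$'' is false, and the justification offered --- that otherwise arbitrarily close points would switch base functions and contradict differentiability --- does not hold: switching between base functions is perfectly compatible with differentiability when the competing pieces agree to first order on the switching locus. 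Concretely, take $K=2$, $V_1\equiv 0$, $V_2(x)=x_1^3$ on $\R^2$ and $V=\min\{V_1,V_2\}$. Then $C_1=\{x_1\ge 0\}$, $C_2=\{x_1\le 0\}$, and every point of the hyperplane $\{x_1=0\}$ is a differentiability point of $V$ (indeed $V$ is $\cC^1$ there) which belongs to neither $\inn(C_1)$ nor $\inn(C_2)$. A sequence $x_k\to x$ realizing a limit in \eqref{eq:limClark} may consist entirely of such points, and then your pigeonhole step has nothing to act on.

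This cannot be repaired simply by discarding the offending points: one would have to show both that the set of differentiability points outside $\bigcup_\ell\inn(C_\ell)$ is Lebesgue-null (which fails for general $\cC^1$ base functions --- that set can have positive measure) and that \eqref{eq:limClark} is insensitive to excluding an extra null set; neither is addressed. What the cited results actually supply, and what your argument is missing, are two nontrivial facts: (i) at \emph{every} differentiability point $y$ one still has $\nabla V(y)=\nabla V_\ell(y)$ for some essentially active index $\ell\in\alpha_V(y)$ (proved via directional derivatives and a conic subdivision of a neighborhood of $y$, not via $y\in\inn(C_\ell)$), and (ii) an outer-semicontinuity property of $\alpha_V$ guaranteeing that indices essentially active at $x_k$ near $x$ are essentially active at $x$. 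The statement is true, but your proof of the inclusion $\partial V(x)\subseteq\co\{\nabla V_\ell(x)\,\vert\,\ell\in\alpha_V(x)\}$ does not establish it; you should either supply arguments for (i)--(ii) or, as the paper does, defer to the piecewise-$\cC^1$ references.
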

\begin{proof}
Max-min functions are in particular piecewise $\cC^1$ functions, as defined in~\cite[Chapter~4]{Sch12}. Then, equation~\eqref{eq:gradConvex} is proved following the arguments presented in~\cite[Lemma 2]{PanRal96} or in~\cite[Proposition 4.3.1]{Sch12}. Combining~\eqref{eq:gradConvex} with the definition of $\dot{\overline{V}}_F$ given in \eqref{eq:genderiv}, we obtain~\eqref{eq:Mmdirder}.
\end{proof}

We now propose a sufficient condition for asymptotic stability of system \eqref{eq:diffinc} in terms of $\dot{\overline{V}}_F$ given in \eqref{eq:Mmdirder}, while adopting the convention that $\max \emptyset= -\infty$. 

\begin{thm}\label{th:stabgen}
Given system \eqref{eq:diffinc}, an open and connected set $\cD\subset \R^n$ such that $0\in \cD$, and $K$ positive-definite functions $V_1, \dots, V_K \in \cC^1(\cD, \R)$, consider a max-min function $V \in \Mm(V_1, \dots, V_K)$ with $\dot{\overline{V}}_F$ given in \eqref{eq:Mmdirder}.
If there exists a function $\gamma\in \cPd$ such that, for every $x\in \cD$,
\begin{equation}\label{eq:decreasing}
\max \dot{\overline{V}}_F(x) \leq -\gamma(|x|),
\end{equation}
then  $V$ is a Lyapunov function and system \eqref{eq:diffinc} is AS.
If $\cD=\R^n$ and in addition, each $V_j$, $j \in \{1,\dots, K\}$, is radially unbounded, then the origin of \eqref{eq:diffinc} is GAS. 
\end{thm}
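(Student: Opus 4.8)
The plan is to establish the standard Lyapunov decrease estimate along solutions, namely that $t\mapsto V(x(t))$ is nonincreasing with the prescribed rate, and then invoke the general Lyapunov theorem already cited in Section~\ref{sec:preliminaries}. First I would verify the sandwich bounds on $V$: since each $V_j$ is continuous and positive definite on $\cD$, and $V$ is a finite max-min combination of the $V_j$'s, $V$ is continuous, $V(0)=0$, and $V(x)>0$ for $x\neq 0$; on any compact neighborhood of the origin contained in $\cD$ this yields class-$\cK$ functions $\underline\chi,\overline\chi$ with $\underline\chi(|x|)\le V(x)\le\overline\chi(|x|)$ (in the global case with each $V_j$ radially unbounded, one gets $\underline\chi,\overline\chi\in\cK_\infty$). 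Also $V$ is locally Lipschitz, being a max-min of $\cC^1$ functions.

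The heart of the argument is the chain-rule / decrease step. Fix a solution $x:[0,T)\to\cD$ of \eqref{eq:diffinc}. Since $x(\cdot)$ is absolutely continuous and $V$ is locally Lipschitz, $t\mapsto V(x(t))$ is absolutely continuous, hence differentiable for almost every $t$, and at every such $t$ the derivative $\dot x(t)$ exists and lies in $F(x(t))$. The key lemma I would use (this is the standard nonsmooth chain rule, e.g. as in \cite{bacciotti99} or \cite{ceragioli}) is that for a.e.\ $t$,
\[
\frac{d}{dt}V(x(t)) \in \dot{\overline{V}}_F(x(t)),
\]
i.e.\ whenever both $\frac{d}{dt}V(x(t))$ and $\dot x(t)$ exist, one has $p^\top\dot x(t)=\frac{d}{dt}V(x(t))$ for \emph{every} $p\in\partial V(x(t))$ — so in particular $\frac{d}{dt}V(x(t))$ is an admissible value of the set-valued Lie derivative at $x(t)$, with witness $f=\dot x(t)\in F(x(t))$. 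Combining this with hypothesis \eqref{eq:decreasing} gives $\frac{d}{dt}V(x(t)) \le \max\dot{\overline V}_F(x(t)) \le -\gamma(|x(t)|)$ for a.e.\ $t\in[0,T)$, which is exactly the Lyapunov decrease condition required in the definition recalled in Section~\ref{sec:preliminaries}.

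To make the chain-rule step self-contained I would argue as follows: at a time $t$ where $\dot x(t)$ and $\frac{d}{dt}V(x(t))$ both exist, pick any essentially-active index $\ell\in\alpha_V(x(t))$; by Lemma~\ref{lemma:prelimnLemma} there is a neighborhood on which $V(z)=V_{\ell_z}(z)$ with $\ell_z$ essentially active, and a more careful local analysis (again piecewise-$\cC^1$ structure, cf.\ \cite[Chapter 4]{Sch12}) shows that along the curve $x(\cdot)$ the value $V(x(s))$ agrees to first order with $V_\ell(x(s))$ near $s=t$ whenever $\ell$ is essentially active there, so $\frac{d}{dt}V(x(t))=\nabla V_\ell(x(t))^\top\dot x(t)$ for every such $\ell$; by Proposition~\ref{lm:gengrad}, $\partial V(x(t))=\co\{\nabla V_\ell(x(t)):\ell\in\alpha_V(x(t))\}$, so $p^\top\dot x(t)=\frac{d}{dt}V(x(t))$ for all $p\in\partial V(x(t))$, establishing membership in $\dot{\overline V}_F(x(t))$. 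Once the decrease estimate holds a.e., the conclusion — AS on $\cD$, and GAS when $\cD=\R^n$ with radially unbounded base functions — follows verbatim from the converse-free Lyapunov theorem in \cite[Chapter 4]{khalil2002nonlinear} as quoted earlier in the excerpt. I expect the main obstacle to be precisely the chain-rule step: showing that the classical derivative of $V\circ x$, when it exists, equals $\nabla V_\ell(x(t))^\top\dot x(t)$ simultaneously for \emph{all} essentially-active $\ell$ (not merely for one active index) — this is where the distinction between "active" and "essentially active" indices, and hence the use of the set-valued Lie derivative rather than Clarke's coarser $\dot V_F$, does the real work and must be handled with care for times $t$ at which $x(t)$ lies on a switching surface between base functions.
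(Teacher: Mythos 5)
Your overall architecture matches the paper's: reduce the theorem to the decrease estimate $\frac{d}{dt}V(x(t))\le-\gamma(|x(t)|)$ a.e.\ via the inclusion $\frac{d}{dt}V(x(t))\in\dot{\overline V}_F(x(t))$, then invoke the standard Lyapunov theorem together with the sandwich bounds (which you handle correctly). However, there is a genuine gap at exactly the point you flag as ``the main obstacle'': the chain-rule inclusion is the entire nontrivial content of the theorem, and your proposal neither proves it nor can legitimately import it. The references you lean on (\cite{bacciotti99}, \cite{ceragioli}) establish this inclusion only for \emph{regular} locally Lipschitz functions, and a max-min function is in general not regular (e.g.\ $V=\min\{V_1,V_2\}$ violates \eqref{eq:regularity}); the paper makes this point explicitly and isolates the needed statement as Lemma~\ref{lm:hard}, whose proof is the real work. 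Your self-contained substitute --- that $V(x(s))$ ``agrees to first order with $V_\ell(x(s))$ near $s=t$ for every essentially active $\ell$'' --- is not an argument but a restatement of the conclusion: first-order agreement with \emph{every} $\ell\in\alpha_V(x(t))$ simultaneously is equivalent to all the products $\nabla V_\ell(x(t))^\top\dot x(t)$ coinciding, which is precisely what must be shown.

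For the record, the paper closes this gap as follows. Using Proposition~\ref{prop:Equivalence} and Lemma~\ref{lemma:prelimnLemma}, write $V$ locally near $\varphi(t)$ as $\min_j\oV_j$ with $\oV_j=\max_{\ell\in S_j\cap\alpha_V(\varphi(t))}V_\ell$. Each $\oV_j$ is a pure max of $\cC^1$ functions, hence regular, so computing the difference quotient of $\oV_j\circ\varphi$ along $\dot\varphi(t)$ with $h\to0^+$ gives the $\max$ of the gradient products over $S_j\cap\alpha_V(\varphi(t))$, while $h\to0^-$ gives the $\min$; existence of the two-sided derivative forces these to coincide, so all gradient products within each $S_j$ equal a common value $a_j(t)$. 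Repeating the two-sided argument on the outer $\min_j$ forces all the $a_j(t)$ to be equal, yielding the common value $\overline a(t)=\nabla V_\ell(\varphi(t))^\top\dot\varphi(t)$ for \emph{all} $\ell\in\alpha_V(\varphi(t))$, i.e.\ $\overline a(t)\in\dot{\overline V}_F(\varphi(t))$. A further bookkeeping point you omit: to guarantee that each $\oV_j\circ\varphi$ is differentiable at the times $t$ under consideration, one must first discard the null sets of non-differentiability of $W_k\circ\varphi$ for \emph{all} $2^K-1$ max-combinations $W_k$ of the base functions, not just of $V\circ\varphi$. Without these two ingredients the proof is incomplete.
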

A fundamental result for proving Theorem~\ref{th:stabgen} appears in Lemma~\ref{lm:hard} given below. The proof of Lemma~\ref{lm:hard} with some related discussions is deferred to Section~\ref{sec:lemDer}. 
\begin{lemma}\label{lm:hard}
Consider a function $V\in \Mm(V_1,\dots, V_K)$ and a solution $\varphi:[0,T)\to \cD$ of the differential inclusion \eqref{eq:diffinc}. For $t\in [0,T)$,
\begin{subequations}\label{eq:condition}
\begin{align}
&\frac{d}{dt}V(\varphi(t))\,\text{ exists almost everywhere and}\label{eq:ExistenceofDV} \\
&\frac{d}{dt}V(\varphi(t)) \in  \dot{\overline{V}}_F(\varphi(t))\;\text{ almost everywhere.}\label{eq:InclusionofDV}
\end{align}
\end{subequations}
\end{lemma}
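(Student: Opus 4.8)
The plan is to prove the two claims of Lemma~\ref{lm:hard} together, working pointwise along the absolutely continuous trajectory $\varphi$. First I would fix the set $N\subset[0,T)$ of measure zero outside of which $\varphi$ is differentiable and $\dot\varphi(t)\in F(\varphi(t))$ (both hold a.e. since $\varphi$ is a solution of~\eqref{eq:diffinc}), and I would further discard the (measure-zero) set where $\frac{d}{dt}V_k(\varphi(t))$ fails to exist for some $k\in\{1,\dots,K\}$ — this is legitimate because each $V_k\in\cC^1$ and $\varphi$ is differentiable off $N$, so in fact $\frac{d}{dt}V_k(\varphi(t))=\nabla V_k(\varphi(t))^\top\dot\varphi(t)$ at every $t\notin N$. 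So the only real work is at a fixed time $t_0\notin N$: I must show $\frac{d}{dt}V(\varphi(t))$ exists at $t_0$ and lies in $\dot{\overline V}_F(\varphi(t_0))$.

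For the existence part, set $x_0=\varphi(t_0)$ and invoke Lemma~\ref{lemma:prelimnLemma}: there is a neighborhood $\cU$ of $x_0$ such that for every $z\in\cU$ we have $V(z)=V_{\ell_z}(z)$ for some $\ell_z\in\alpha_V(x_0)$, hence $V(z)=\min_{\ell\in\alpha_V(x_0)}V_\ell(z)$ or $\max$ — more precisely, on $\cU$ the function $V$ agrees with a fixed finite selection from $\{V_\ell:\ell\in\alpha_V(x_0)\}$. The key consequence I want is: for $z$ near $x_0$, $V(z)$ equals one of finitely many $\cC^1$ functions, all of which agree with $V(x_0)$ at $x_0$ (by~\eqref{eq:alfainclusion}) but which have possibly different gradients $\nabla V_\ell(x_0)$. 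Now restrict to the curve $t\mapsto\varphi(t)$ for $t$ near $t_0$: for each such $t$, $V(\varphi(t))=V_{\ell(t)}(\varphi(t))$ with $\ell(t)\in\alpha_V(x_0)$. The one-sided difference quotient $\frac{V(\varphi(t_0+h))-V(\varphi(t_0))}{h}$ is squeezed: for $h>0$ it equals $\frac{V_{\ell(t_0+h)}(\varphi(t_0+h))-V_{\ell(t_0+h)}(\varphi(t_0))}{h}$, and as $h\to0^+$ each candidate $\frac{V_\ell(\varphi(t_0+h))-V_\ell(\varphi(t_0))}{h}\to\nabla V_\ell(x_0)^\top\dot\varphi(t_0)=:a_\ell$. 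The limit of the actual quotient is then the appropriate extremum (min over $S_j$'s, then max over $j$, or the mirror expression) of the finitely many numbers $\{a_\ell\}$; since this extremization is the same from both sides — $V$ being a fixed max-min selection on $\cU$, evaluated along a differentiable curve — the left and right derivatives coincide, giving existence of $\frac{d}{dt}V(\varphi(t))$ at $t_0$.

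For the membership claim, having established that $\frac{d}{dt}V(\varphi(t_0))=V_{\wt\ell}(x_0)^\top\dot\varphi(t_0)$ for the particular index $\wt\ell$ that is active along the curve at $t_0$ (or, more carefully, I should argue it equals $\nabla V_\ell(x_0)^\top\dot\varphi(t_0)$ for every essentially active $\ell$), I use that $\dot\varphi(t_0)\in F(x_0)$. The cleanest route is to show the stronger statement that $\nabla V_\ell(x_0)^\top\dot\varphi(t_0)$ takes the same value for all $\ell\in\alpha_V(x_0)$: indeed, along the curve the active index may change infinitely often near $t_0$, yet the one-sided derivative exists, which forces agreement of all the $a_\ell$ that are realized; a short combinatorial argument using the max-min structure on $\cU$ (if two selections give different directional derivatives along $\dot\varphi(t_0)$, one of them strictly dominates near $t_0$ on that curve, contradicting that both are "active" infinitely often, or else showing the derivative is still well-defined and equals the common dominating value). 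Then with $f:=\dot\varphi(t_0)\in F(x_0)$ we have $\nabla V_\ell(x_0)^\top f=\frac{d}{dt}V(\varphi(t_0))$ for all $\ell\in\alpha_V(x_0)$, i.e. $\frac{d}{dt}V(\varphi(t_0))\in\dot{\overline V}_F(x_0)$ by the characterization~\eqref{eq:Mmdirder} in Proposition~\ref{lm:gengrad}.

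The main obstacle I anticipate is the bookkeeping in the membership part: making rigorous the claim that the derivative along $\varphi$ equals $\nabla V_\ell(x_0)^\top\dot\varphi(t_0)$ simultaneously for \emph{every} essentially active $\ell$, not merely for one of them. Naively the curve could oscillate between branches with distinct gradients; the resolution is that existence of the (two-sided) derivative of $V(\varphi(\cdot))$ at $t_0$ — which I get from the squeeze argument — plus the fact that each branch value $V_\ell(\varphi(t))$ is itself differentiable, pins down all the realized $a_\ell$ to the common value. I would lean on the piecewise-$\cC^1$ / selection structure guaranteed by Lemma~\ref{lemma:prelimnLemma} and Definition~\ref{def:alpha} (essential activity means the branch is active on a set with $x_0$ in the closure of its interior, hence realized along "most" nearby points, not just on a thin set), and I expect this is exactly where the ``essentially-active'' refinement of the naive active index set earns its keep.
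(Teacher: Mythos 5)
Your overall strategy (localize via Lemma~\ref{lemma:prelimnLemma}, reduce to the finitely many numbers $a_\ell=\nabla V_\ell(\varphi(t_0))^\top\dot\varphi(t_0)$, and conclude via~\eqref{eq:Mmdirder}) points in the right direction, but the two steps you flag as delicate are in fact where the argument breaks. First, the existence claim at \emph{every} $t_0$ where $\varphi$ is differentiable is false, and the ``same extremization from both sides'' assertion is the culprit: for $h>0$ the difference quotient of $V\circ\varphi$ equals $\max_j\min_{k}\xi_k(h)$, but for $h<0$ dividing by a negative number flips the operators, so the left limit is $\min_j\max_k a_k$ while the right limit is $\max_j\min_k a_k$. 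Already for $V=\min\{V_1,V_2\}$ these are $\max\{a_1,a_2\}$ and $\min\{a_1,a_2\}$, which differ whenever $a_1\neq a_2$; the derivative genuinely fails to exist at such times. Claim~\eqref{eq:ExistenceofDV} is only an a.e.\ statement, and the clean route is simply that $V\circ\varphi$ is absolutely continuous (locally Lipschitz composed with absolutely continuous), hence differentiable a.e.

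Second, and more seriously, the membership step rests on the assertion that existence of the two-sided derivative of $V\circ\varphi$ at $t_0$ pins all the $a_\ell$, $\ell\in\alpha_V(\varphi(t_0))$, to a common value. It does not. Take $V=\max\{\min\{V_1,V_2\},V_3\}$ with $\alpha_V(x_0)=\{1,2,3\}$ and $(a_1,a_2,a_3)=(0,2,1)$: both one-sided derivatives equal $1$, so $\frac{d}{dt}V(\varphi(t_0))$ exists, yet $a_1\neq a_3$ and the value $1$ does \emph{not} lie in $\dot{\overline V}_F(\varphi(t_0))$, since~\eqref{eq:Mmdirder} requires $\nabla V_\ell(x_0)^\top f$ to be the \emph{same} for every essentially active $\ell$ --- including branches that are never selected along the curve near $t_0$ and are therefore invisible to your curve-based combinatorial argument. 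The lemma survives only because such times form a null set, and identifying that null set is the actual content of the proof: one must also discard, for each of the finitely many pure-max sub-combinations $\oV_j=\max_{\ell\in S_j\cap\alpha_V(\varphi(t))}V_\ell$ (there are at most $2^K-1$ of them), the null set where $\oV_j\circ\varphi$ fails to be differentiable. At the remaining times, regularity of each $\oV_j$ gives right derivative $\max_\ell a_\ell$ and left derivative $\min_\ell a_\ell$ over $\ell\in S_j\cap\alpha_V(\varphi(t))$, so existence forces all $a_\ell$ within each block to coincide; the same left/right flip applied to the outer min then forces the block values to coincide, yielding a single $\overline a(t)$ equal to $\nabla V_\ell(\varphi(t))^\top\dot\varphi(t)$ for \emph{all} $\ell\in\alpha_V(\varphi(t))$. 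Without this additional measure-zero exclusion your argument cannot close.
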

\begin{oss}[Comparison with other approaches]\label{rmk:comparisionWithClarke}
In Lemma~\ref{lm:hard}, we relate the Dini derivative of $V$ along the solutions of system~\eqref{eq:diffinc} with the Lie derivative $\dot{\overline V}_F$. In~\cite[Chapter 4.2]{clarke1}, we also see a relationship between the Dini derivative and the directional derivative along vector fields in the context of weak stability. In particular, it is shown that for every $\zeta \in \partial V(\varphi(t))$, $\inf_{\dot \varphi(t) \in F(\varphi(t))} \frac{d}{dt} V(\varphi(t)) \in \inf_{\dot \varphi(t) \in F(\varphi(t))}\zeta^\top \dot \varphi(t)$, for almost every $t \ge 0$. On the other hand, for strong stability, it would be natural to work with the relation, $\sup_{\dot \varphi(t) \in F(\varphi(t))} \frac{d}{dt} V(\varphi(t)) \in \sup_{\dot \varphi(t) \in F(\varphi(t))} \zeta^\top \dot \varphi(t) \le - \gamma(\vert \varphi(t) \vert)$, for every $\zeta \in \partial V(\varphi(t))$. However, such a relation is conservative for our purposes, as it can be seen in Example~\ref{shieldexample} (see Remark~\ref{rem:ExClarkFails}), where the supremum on the right-hand side of the foregoing inclusion is strictly positive along certain directions in the set $F(x)$ for some $x \in \R^2$. The use of Lie derivative in Lemma~\ref{lm:hard} thus provides tighter bounds on the Dini derivative by selecting meaningful directions from the set $F(x)$ for each $x \in \R^n$.
\end{oss}
\begin{oss}
Stability results involving the set-valued Lie derivative~\eqref{eq:genderiv} and condition \eqref{eq:decreasing} are proved in \cite[Proposition 1]{bacciotti99} for locally Lipschitz and  regular (recall Definition~\ref{def:Regular}) Lyapunov functions. Set-valued Lie derivatives are also used in \cite{KamRosPar19} to identify and remove infeasible directions from a differential inclusion when limiting the attention to regular locally Lipschitz functions. Showing that this condition is sufficient when considering locally Lipschitz functions obtained via a max-min composition nontrivially generalizes such results. In fact, a function $V\in\Mm(V_1, \dots, V_K)$ is in general \emph{not} regular: recalling~\eqref{eq:gradConvex}, the definition in~\eqref{eq:regularity} requires, for a regular function $V$, that
\begin{equation}\label{eq:Mmregular}
\begin{aligned}
\lim_{h \to 0^+} \frac{V(x+hv)-V(x)}{h}=\max \{ \nabla V_\ell(x)^\top v \, \vert \, \ell \in \alpha_V(x)\},
\end{aligned}
\end{equation} 
for all $x\in \cD$ and for all $v\in \R^n$.
However considering for example  $V(x)= \min \{V_1(x), V_2(x) \}$, we have that the left-hand side of \eqref{eq:Mmregular} is equal to $\min\{\nabla V_{\ell}^\top v \, \vert \, \ell \in \alpha_V(x)\}$, and thus in general equality \eqref{eq:regularity} doesn't hold. In this sense Lemma \ref{lm:hard} is a generalization of \cite[Proposition 1]{bacciotti99} to a class of nonregular functions.
\end{oss}

Recalling inclusion \eqref{eq:setdiffinclusion}, we can state the following result specifically for \eqref{eq:sysDI}, using the notion of Clarke's generalized derivative, which is generally more conservative than Theorem~\ref{th:stabgen}. This result is also reported in our preliminary conference paper \cite[Theorem 1]{dellarossa18}.
\begin{cor}\label{mainteo}
Consider the DI~\eqref{eq:sysDI}. Given an open and connected set $\cD\subset \R^n$ such that $0\in \cD$ and $K$ positive-definite functions $V_1, \dots, V_K \in \cC^1(\cD, \R)$, consider a max-min function $V \in \Mm(V_1, \dots, V_K)$. Suppose that there exists a function $\gamma\in \cPd$, such that for all $x\in \cD$,
\begin{equation}\label{flow}
\begin{aligned}
\nabla V_\ell(x)^\top f_i(x)\le -\gamma(|x|), \;\; \forall \, \ell\in \alpha_V(x),
\end{aligned}
\end{equation}
for all $i \in \{1, \dots,M\}$. Then the origin of \eqref{eq:sysDI} is AS and $V$ is a Lyapunov function for system~\eqref{eq:sysDI}.
If $\cD= \R^n$, and in addition, each $V_j$, $j \in \{1,\dots, K\}$, is radially unbounded, then the origin of \eqref{eq:sysDI} is GAS.
\end{cor}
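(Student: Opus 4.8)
The plan is to obtain Corollary~\ref{mainteo} as a direct consequence of Theorem~\ref{th:stabgen}, applied to the set-valued map
\[
F(x):=\co\{f_i(x)\,\vert\, i\in\{1,\dots,M\}\},
\]
for which the differential inclusion~\eqref{eq:diffinc} is exactly~\eqref{eq:sysDI}. Since each $f_i$ is locally Lipschitz (hence continuous) with $f_i(0)=0$, this $F$ has nonempty, compact, convex values, is upper semicontinuous, and has $0\in F(0)$, so it is an admissible right-hand side for Theorem~\ref{th:stabgen}. Hence it suffices to show that hypothesis~\eqref{flow} implies the decrease condition~\eqref{eq:decreasing}, namely $\max\dot{\overline{V}}_F(x)\le-\gamma(|x|)$ for every $x\in\cD$; the conclusions (AS, and GAS under $\cD=\R^n$ with radially unbounded base functions) then follow verbatim from Theorem~\ref{th:stabgen}.

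To prove the implication I would first bound Clarke's generalized derivative $\dot V_F(x)$ and then invoke the inclusion~\eqref{eq:setdiffinclusion}. Fix $x\in\cD$ and take $a\in\dot V_F(x)$, so that $a=p^\top f$ for some $p\in\partial V(x)$ and $f\in F(x)$. By Proposition~\ref{lm:gengrad}, $\partial V(x)=\co\{\nabla V_\ell(x)\,\vert\,\ell\in\alpha_V(x)\}$, so $p=\sum_{\ell\in\alpha_V(x)}\mu_\ell\nabla V_\ell(x)$ with $\mu_\ell\ge0$, $\sum_\ell\mu_\ell=1$; likewise $f=\sum_{i=1}^M\lambda_i f_i(x)$ with $\lambda_i\ge0$, $\sum_i\lambda_i=1$. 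Then, by bilinearity of the scalar product and hypothesis~\eqref{flow},
\[
a=\sum_{\ell\in\alpha_V(x)}\sum_{i=1}^M\mu_\ell\lambda_i\,\nabla V_\ell(x)^\top f_i(x)\le\sum_{\ell\in\alpha_V(x)}\sum_{i=1}^M\mu_\ell\lambda_i\bigl(-\gamma(|x|)\bigr)=-\gamma(|x|),
\]
where we used that $\alpha_V(x)\neq\emptyset$ (Lemma~\ref{lemma:prelimnLemma}), so that~\eqref{flow} is nonvacuous and the index set of the sum is nonempty. Thus $\max\dot V_F(x)\le-\gamma(|x|)$, and since $\dot{\overline{V}}_F(x)\subset\dot V_F(x)$ by~\eqref{eq:setdiffinclusion}, also $\max\dot{\overline{V}}_F(x)\le-\gamma(|x|)$, with the convention $\max\emptyset=-\infty$ covering the case $\dot{\overline{V}}_F(x)=\emptyset$. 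This is precisely~\eqref{eq:decreasing}, so Theorem~\ref{th:stabgen} applies and yields the claim.

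There is no genuine obstacle in this argument: everything reduces to the fact that inequality~\eqref{flow}, being linear in the direction and holding at the generators $f_i(x)$ of $F(x)$ and at the generators $\nabla V_\ell(x)$ of $\partial V(x)$, propagates to all convex combinations. The only details requiring a moment of care are verifying that $F$ meets the standing hypotheses of Theorem~\ref{th:stabgen}, the empty-set convention for $\max\dot{\overline{V}}_F(x)$, and the appeal to Lemma~\ref{lemma:prelimnLemma} so that $\alpha_V(x)$ is nonempty. This also makes transparent the remark preceding the statement: Corollary~\ref{mainteo} is phrased through Clarke's derivative $\dot V_F$ (equivalently, through the componentwise condition~\eqref{flow}) rather than through the Lie derivative $\dot{\overline{V}}_F$, and is therefore in general more conservative than Theorem~\ref{th:stabgen}.
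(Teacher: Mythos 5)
Your proof is correct and follows essentially the same route as the paper: both arguments use Proposition~\ref{lm:gengrad} to write elements of $\partial V(x)$ as convex combinations of the gradients $\nabla V_\ell(x)$, $\ell\in\alpha_V(x)$, propagate the bound~\eqref{flow} by bilinearity to all of $\dot V_F(x)$, and then pass to $\dot{\overline{V}}_F(x)$ via the inclusion~\eqref{eq:setdiffinclusion} before invoking Theorem~\ref{th:stabgen}. The only difference is cosmetic (you carry the double convex combination over $\ell$ and $i$ simultaneously, and you spell out the admissibility of $F$ and the nonemptiness of $\alpha_V(x)$, which the paper leaves implicit).
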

\begin{proof}
Consider a point $x \in \cD$, and suppose that $\alpha_V(x)=\{\ell_1, \dots, \ell_p\}$. Recalling Proposition~\ref{lm:gengrad}, for each $v \in \partial V(x)$, there exist $\lambda_1, \dots, \lambda_p \geq 0$, $\sum_{j=1}^p \lambda_j=1$, such that
$
v=\sum_{j=1}^p \lambda_j \nabla V_{\ell_j}(x)$.
Consequently, for each $i\in \{1, \dots,M\}$, \eqref{flow} yields
\begin{align*}
v ^\top f_i(x) & =\sum_{j=1}^p \lambda_j \nabla V_{\ell_j}(x)^\top f_i(x) 
 \leq -\sum_{j=1}^p \lambda_j \gamma(|x|) =-\gamma(|x|),
\end{align*}
which implies that $v^\top  f \leq -\gamma(|x|)$,
for each $v \in \partial V(x)$, and every $f\in \co \bigl \{f_i(x) \, \vert \; i \in \{1, \dots, M\}\bigr\}$. Recalling~\eqref{eq:setdiffinclusion}, inequality~\eqref{eq:decreasing} holds and the result follows from Theorem~\ref{th:stabgen}.
\end{proof}

 While Theorem~\ref{th:stabgen} holds for a general differential inclusion~\eqref{eq:diffinc}, the statement of Corollary~\ref{mainteo} is specifically tailored for system~\eqref{eq:sysDI}. 

\subsection{Proof of Lemma~\ref{lemma:prelimnLemma}}\label{sec:propGrad}
\begin{proof}\textcolor{black}{
\emph{Case 1:} Consider first the case where $V(x)=V_\ell(x)$ for an $\ell \in \{1,\dots, K\}$ and $V(x)\neq V_j(x)$ for all $j\neq \ell$. By continuity of $V,V_1, \dots, V_K$ there exists a neighborhood $\cU\subset \cD$ of $x$ where the non-equality relations are preserved and thus $\cU\subset \inn(C_\ell)$, which implies $x\in \inn(C_\ell)$ and $\alpha_V(x)=\{\ell\}$, in addition to~\eqref{eq:lemmaImplication} with $\ell_z\equiv \ell$.\\
\emph{Case 2:} Let us now consider the general case $V(x)=V_{\ell_1}(x)=\dots=V_{\ell_p}(x)$ and $V(x)\neq V_j(x)$ if $j\notin \{\ell_1,\dots,\ell_p\}$, for some $\ell_1,\dots,\ell_p\in \{1,\dots, K\}$. By continuity of $V,V_1,\dots V_K$ there exists a neighborhood $\cU_0\subset \cD$ of $x$ such that the non-equality relations $V(z)\neq V_j(z)$, $\forall\,j\notin \{\ell_1,\dots,\ell_p\}$ are conserved, for any $z\in \cU_0$. Recalling~\eqref{eq:alfainclusion}, $\alpha_V(x)\subset\{\ell_1,\dots, \ell_p\}$; when $\alpha_V(x)=\{\ell_1,\dots, \ell_p\}$, we are done, by proceeding exactly as in \emph{Case 1}. Otherwise, when $\alpha_V(x)\neq \{\ell_1,\dots, \ell_p\}$ consider without loss of generality, that  $\ell_1\notin \alpha_V(x)$. By Definition~\ref{def:alpha}, $\ell_1\notin \alpha_V(x)$ implies $x\notin \cl(\inn(C_{\ell_1}))$ therefore there exists an open neighborhood $\cU_1$ of $x$ such that
\begin{equation}\label{eq:emptyInter}
\cU_1\subset\cU_0,\;\;\;\;\cU_1\cap \inn(C_{\ell_1})=\emptyset.
\end{equation}
Consider now, if any, each point $\overline x \in \cU_0$ such that $V(\overline x) = V_{\ell_1}(\overline x)$ and $V(\overline x) \neq V_\ell (\overline x)$, for all $\ell \in \{\ell_2, \dots, \ell_p\}$, it again follows from continuity  that $V(z)\neq V_\ell(z)$ for every $\ell\in \{\ell_2,\dots ,\ell_p\}$, and every $z$ in some neighborhood $\cV\subset\cU_0$ of $\overline x$. Moreover we have, by our choice of $\cU_0$, $V(z)\neq V_j(z)$ if $j\notin\{\ell_1,\dots,\ell_p\}$ for every $z\in \cV$, which implies $V(z)=V_{\ell_1}(z)$, $\forall\,z\in \cV$. As a consequence $\overline x \in \inn(C_{\ell_1})$, and by equation \eqref{eq:emptyInter} we have $\overline x\notin\cU_1$. In other words, we have shown that
\begin{equation}\label{eq:LemmaRepresentation}
(z\in \cU_1)\Rightarrow(\exists\,\ell_z\in\{\ell_2,\dots,\ell_p\}\text{ s.t. }V(z)=V_{\ell_z}(z)).
 \end{equation}
 Now, if $\alpha_V(x)=\{\ell_2,\dots,\ell_p\}$,~\eqref{eq:lemmaImplication}~holds with $\cU=\cU_1$ and $\ell_z\in \{\ell_2,\dots,\ell_p\}$. Otherwise we can iterate this argument supposing $\ell_2\notin\alpha_V(x)$ and so on. At each iteration $\nu\in \{2,\dots, p-1\}$, generalizing~\eqref{eq:emptyInter} and~\eqref{eq:LemmaRepresentation}, we construct an open neighborhood $\cU_\nu$ of $x$ such that $\cU_\nu\subset \cU_{\nu-1}$ and 
 \begin{equation}\label{eq:LemmaLastInclusion}
(z\in \cU_\nu)\Rightarrow(\exists\,\ell_z\in\{\ell_{\nu+1},\dots,\ell_p\}\text{ s.t. }V(z)=V_{\ell_z}(z)).
 \end{equation}
 Either $\alpha_V(x)=\{\ell_{\nu+1}, \dots, \ell_p\}$ and the proof is complete with $\cU=\cU_\nu$ or we need to iterate again. Note that when $\nu=p-1$, the existence of $\ell_z\in \{\ell_p\}$ as in~\eqref{eq:LemmaLastInclusion}, implies $V(z)=V_{\ell_p}(z)$, for all $z\in \cU_{p-1}$, thus proving $\cU_{p-1}\subset\inn(C_{\ell_p})$ and hence $\alpha_V(x)=\{\ell_p\}$. This completes the proof of \eqref{eq:lemmaImplication} and the fact that $\alpha_V$ is non-empty.}
\end{proof}

\subsection{Proof of Lemma~\ref{lm:hard}}\label{sec:lemDer}
Lemma~\ref{lm:hard} is the key result used in the proof of Theorem~\ref{th:stabgen}, establishing properties of the directional  derivative of  $V \in \Mm(V_1, \dots, V_K)$ along the solutions of~\eqref{eq:diffinc}. In its proof we will use the following result.
\begin{claim}\label{Claim:Exchanging}
Given functions $\xi_1,\dots \xi_J:\R\to \R$ continuous at $0$, we have that 
\[
\lim_{h\to 0}\min_{j\in\{1,\dots,J\}}\xi_j(h)=\min_{j\in \{1,\dots, J\}} \lim_{h\to 0}\xi_j(h).
\]
\end{claim}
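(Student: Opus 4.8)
The plan is to prove the claim by a direct $\varepsilon$-$\delta$ argument, exploiting the fact that the minimum of finitely many functions continuous at a point is again continuous at that point. First I would set $L_j := \lim_{h\to 0}\xi_j(h)$, which exists for each $j$ by hypothesis, and let $m := \min_{j\in\{1,\dots,J\}} L_j$; the goal is to show $\lim_{h\to 0}\min_j \xi_j(h) = m$. Fix $\varepsilon > 0$. For each $j$, continuity of $\xi_j$ at $0$ gives a $\delta_j > 0$ such that $|h| < \delta_j$ implies $|\xi_j(h) - L_j| < \varepsilon$. Taking $\delta := \min_{j} \delta_j > 0$ (a minimum over the finite index set $\{1,\dots,J\}$), for $|h| < \delta$ we have $|\xi_j(h) - L_j| < \varepsilon$ simultaneously for all $j$.

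Next I would combine these bounds to control the minimum. For $|h| < \delta$ and every $j$ we have $\xi_j(h) \ge L_j - \varepsilon \ge m - \varepsilon$, hence $\min_j \xi_j(h) \ge m - \varepsilon$. For the other direction, pick an index $j^\star$ achieving $L_{j^\star} = m$; then $\min_j \xi_j(h) \le \xi_{j^\star}(h) \le L_{j^\star} + \varepsilon = m + \varepsilon$. Combining the two inequalities yields $|\min_j \xi_j(h) - m| \le \varepsilon$ for all $|h| < \delta$, which is exactly the statement that $\lim_{h\to 0}\min_j \xi_j(h) = m = \min_j \lim_{h\to 0}\xi_j(h)$.

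This argument is elementary and I do not anticipate a genuine obstacle; the only subtlety worth flagging is that the interchange of $\lim$ and $\min$ relies crucially on the index set being \emph{finite}, so that $\delta = \min_j \delta_j$ is strictly positive and so that a minimizing index $j^\star$ exists. An essentially identical argument (or a reduction via $\max_j \xi_j = -\min_j(-\xi_j)$) handles the analogous statement for the maximum, which is what is ultimately needed when dealing with the $\max$-$\min$ structure of $V$. I would present the proof in roughly the two paragraphs above, keeping the $\varepsilon$-$\delta$ bookkeeping explicit since it is short.
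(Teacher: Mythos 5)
Your proof is correct and is essentially the same argument as the paper's: the paper simply invokes the fact that the pointwise minimum of finitely many functions continuous at $0$ is continuous at $0$ and then evaluates at $h=0$, which is exactly what your explicit $\varepsilon$--$\delta$ computation establishes. Your flag that finiteness of the index set is the crucial hypothesis is the right observation, and the reduction $\max_j\xi_j=-\min_j(-\xi_j)$ for the max version is also what the paper implicitly relies on.
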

\begin{proof}[Proof of Claim~\ref{Claim:Exchanging}] Define $\xi(h):=\min_{j\in \{1,\dots, J\}}\xi_j(h)$ for all $h\in \R$; $\xi$ is continuous at $0$ since it is the pointwise minimum of continuous functions. We have
\[
\begin{aligned}
\lim_{h\to 0}&\min_{j\in\{1,\dots,J\}}\xi_j(h)=\lim_{h\to 0}\xi(h)=\xi(0)\\=&\min_{j\in\{1,\dots,J\}}\xi_j(0)=\min_{j\in \{1,\dots, J\}} \lim_{h\to 0}\xi_j(h),
\end{aligned}
\]
thus concluding the proof.
\end{proof}
\begin{proof}[Proof of Lemma~\ref{lm:hard}]
Recalling that $\varphi( \cdot)$ is an absolutely continuous solution of the differential inclusion \eqref{eq:diffinc} and that $V$ is a locally Lipschitz function, the function $V \circ \varphi : [0,T) \to \R$ is absolutely continuous, and hence $\frac{d}{dt}V(\varphi(t))$ exists almost everywhere in $[0,T)$, proving~\eqref{eq:ExistenceofDV}. Moreover, there exists a set $\cN_0$ of measure zero such that, for every $t \in [0,T) \setminus \cN_0$, both $\dot \varphi(t)$ and $\frac{d}{dt}V(\varphi(t))$ exist, and $\dot \varphi(t) \in F(\varphi(t))$.\\
To prove~\eqref{eq:InclusionofDV}, from Proposition~\ref{prop:Equivalence}, we use representation \eqref{eq:MinMax} of $V\in \Mm(V_1, \dots, V_K)$, dropping the superscript ``$\star$'' for notational simplicity, that is
\[
V(x):=\min_{j \in \{1, \dots, J\}} \left \{\max_{\ell \in S_j} \{V_\ell(x)\} \right \},
\]
where $J\geq 0$ and  $S_1, \dots, S_J$ are non-empty subsets of $\{1, \dots, K\}$. 
By Lemma~\ref{lemma:prelimnLemma}, for each $t \in [0,T)$, and for each $x$ in a neighborhood of $\varphi(t)$, $V(x)$ can be expressed as
\[
V(x):=\min_{j \in \{1, \dots, J\}} \left \{\max_{\ell \in S_j \cap \alpha_V(\varphi(t)) } \{V_\ell(x)\} \right \};
\]
namely only the active indexes in $\alpha_V(\varphi(t))$ play a role (possibly ruling out the sets $S_j$ for which $S_j\cap \alpha_V(\varphi(t))=\emptyset$).
Let us introduce the notation
\begin{equation}\label{eq:VBar}
\oV_j(x):=\max_{\ell \in S_j\cap \alpha_V(\varphi(t)) } \{V_\ell(x)\}.
\end{equation}
To proceed in a constructive manner, consider the set $\mathbf{M}(V_1,\dots, V_K)$ containing all the functions obtained by max (and only max) combination over $V_1, \dots, V_K$. The cardinality of $\mathbf{M}(V_1,\dots, V_K)$ is finite and equal to $N_K:=2^K-1$ and we can denote its elements by $W_k$, for $k \in \{1, \dots, N_K\}$. Reasoning as before, for each $k$ define  $\cN_k$ as the subset of $[0,T)$ where $W_k \circ \varphi$ is not differentiable. Since $W_k$ are locally Lipschitz, then each $\cN_k$ has measure zero.
Fix any $t \in [0,T) \setminus (\bigcup_{k\in \{0, \dots, N_K\}}\cN_k)$.
From the fact that $\oV_j$ in~\eqref{eq:VBar} is locally Lipschitz for each $j \in \{1, \dots, J\}$, we obtain
\begin{equation}\label{eq:derder}
\frac{d}{dt}\overline{V}_j(\varphi(t))= \lim_{h \to 0} \frac{\oV_j(\varphi(t)+h \dot \varphi(t))-\oV_j(\varphi(t))}{h},
\end{equation}
where the limit exists because $t \notin \bigcup_{k\in \{1, \dots, N_K\}}\cN_k$.
The functions $\overline{V}_j$ in~\eqref{eq:VBar} are regular (Definition~\ref{def:Regular}). We can follow the idea of \cite[Lemma 1]{bacciotti99}:
by letting $h$ go to zero from the right, recalling inclusion \eqref{eq:alfainclusion}, we get
\begin{align}\label{eq:derderri}
\frac{d}{dt}\overline{V}_j(\varphi(t))=\max_{\ell \in S_j \cap\alpha_{V}(\varphi(t))} \left \{\nabla V_\ell(\varphi(t))^\top \dot \varphi(t)\right \}.
\end{align}
Similarly, by letting $h$ go to zero from the left in \eqref{eq:derder}, we get
\begin{align}\label{eq:derderle}
\frac{d}{dt}\overline{V}_j(\varphi(t))=\min_{\ell \in S_j \cap\alpha_{V}(\varphi(t))} \left \{\nabla V_\ell(\varphi(t))^\top \dot \varphi(t)\right \}.
\end{align}
Since $\frac{d}{dt}\oV_j(\varphi(t))$ exists, we have \eqref{eq:derderri}=\eqref{eq:derderle}, and thus for each $j \in \{1, \dots, J\}$ we can write, for all $\ell \in S_j \cap \alpha_V(\varphi(t))$,
\begin{equation}\label{eq:ajdef}
\frac{d}{dt}\oV_j(\varphi(t))=\nabla V_{\ell}(\varphi(t))^\top \dot \varphi(t)=:a_j(t).
\end{equation}
Now consider the function $V(x)=\min_{j \in \{1, \dots, J\}} \left \{\oV_j(x) \right \}$, for $x$ in some neighborhood of $\varphi(t)$. For all $h>0$, we use the fact that  $\oV_j(\varphi(t))=V(\varphi(t))$ for all $j\in \{1,\dots, J\}$, to obtain
\[
\begin{aligned}
\xi(h)&:=\frac{V(\varphi(t)+h\dot \varphi(t))-V(\varphi(t))}{h}\\
&=\frac{\min_{j}\{\oV_j(\varphi(t)+h\dot \varphi(t))\}-V(\varphi(t))}{h}\\
&=\min_{j \in \{1, \dots, J\}}\left\{\tfrac{\oV_j(\varphi(t)+h\dot \varphi(t))-\oV_j(\varphi(t))}{h} \right\} =:\min_{j \in \{1, \dots, J\}}\xi_j(h).
\end{aligned}
\]
Then, applying Claim~\ref{Claim:Exchanging} and~\eqref{eq:ajdef} we have
\begin{align}
\frac{d}{dt}V(\varphi(t))&=\lim_{h\to0^+}\min_{j \in \{1 \dots J\}}\left\{\xi_j(h) \right\}=\hskip-0.2cm\min_{j\in \{1 \dots,J\}}\hskip-0.1cm\left \{\lim_{h\to 0^+}\xi_j(h)\right\}\nonumber
\\ 
&=\min_{j\in \{1, \dots, J\}}\left\{\frac{d}{dt}\oV_j(\varphi(t))\right\}=\min_{j\in \{1, \dots, J\}}\{a_j(t)\}\label{eq:lemmaprop1}.\end{align}
Using again Claim~\ref{Claim:Exchanging}, we can also write 
\begin{align}
\frac{d}{dt}&V(\varphi(t))=\lim_{h\to 0^-}\frac{V(\varphi(t)+
h\dot \varphi(t))-V(\varphi(t))}{h}\nonumber\\
&=-\lim_{h\to 0^-}\left (\min_{j\in \{1, \dots, J\}}\left \{\frac{\oV_j(\varphi(t)+
h\dot \varphi(t))-\oV_j(\varphi(t))}{-h}\right \}\right) \nonumber \\
&=-\min_{j\in \{1, \dots, J\}}\left \{\lim_{h\to 0^-}\frac{\oV_j(\varphi(t)+
h\dot \varphi(t))-\oV_j(\varphi(t))}{-h}\right\}\nonumber\\
&=-\min_{j\in \{1, \dots, J\}}\{-a_j(t)\}=\max_{j\in \{1, \dots, J\}}\{a_j(t)\}. \label{eq:lemmaprop2}
\end{align}
Summarizing, from \eqref{eq:lemmaprop1} and \eqref{eq:lemmaprop2}, it follows that $a_1(t)=\dots=a_J(t):=\overline{a}(t)$. Therefore, from \eqref{eq:ajdef} we get, for each $j\in \{1,\dots,J\}$, that $\ell\in S_j\cap\alpha_V(\varphi(t))$ implies $\nabla V_\ell(\varphi(t))^\top \dot \varphi(t)=\overline{a}(t)$. Finally, recalling that $\alpha_V(\varphi(t))=\bigcup_{j} S_j\cap \alpha_V(\varphi(t))$, we have
\begin{equation*}
 \nabla V_{\ell}(\varphi(t))^\top \dot \varphi(t)=\overline{a}(t),\;\;\forall \ell \in \alpha_V(\varphi(t)).
 \end{equation*}
 \begin{figure*}[t!]
    \centering
    \begin{subfigure}[t]{0.5\textwidth}
        \centering
        \includegraphics[height=3.6cm]{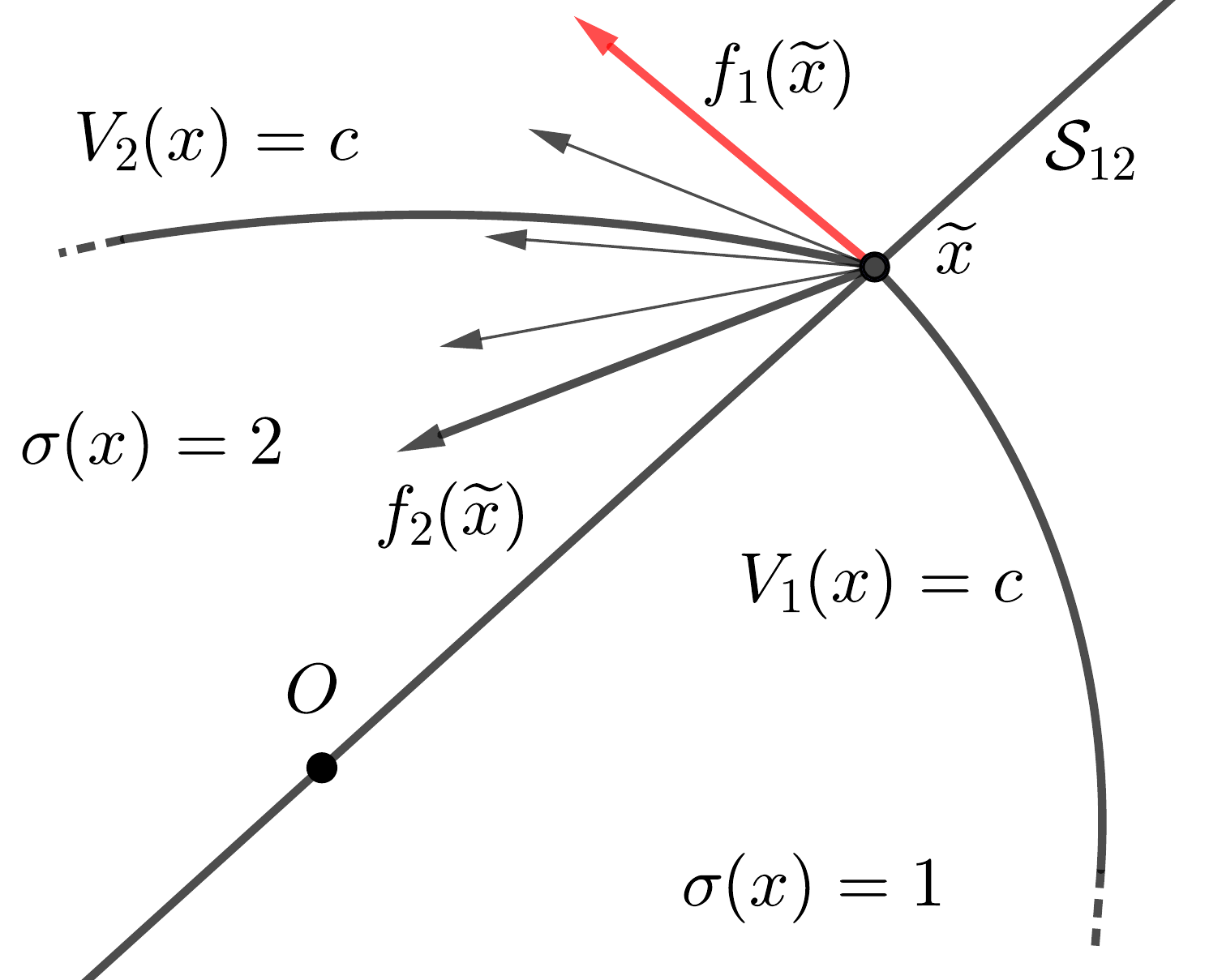}
        \caption{The vector fields $f_1(\widetilde{x})$ and $f_2(\widetilde{x})$  are pointing in the same half-plane, which corresponds to the case $\dot{\overline{V}}_{F^\sw}(\widetilde{x})=\emptyset$.}
    \end{subfigure}%
    ~ 
    \begin{subfigure}[t]{0.5\textwidth}
        \centering
        \includegraphics[height=3.6cm]{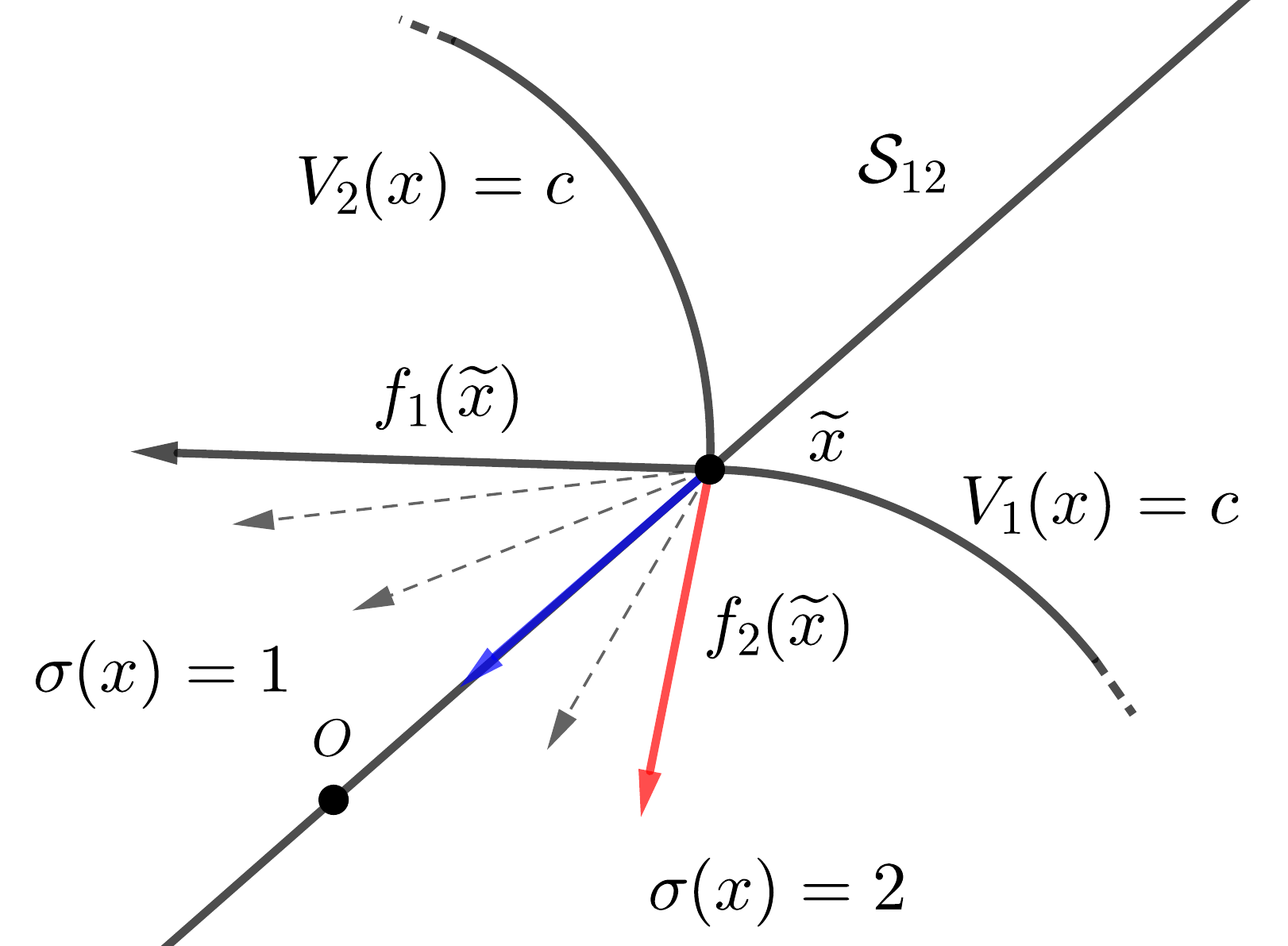}
        \caption{ A convex combination of the vector fields $f_1(\widetilde{x})$ and $f_2(\widetilde{x})$ aligns with the tangent space of $\cS_{12}$ at $\wt x$  and thus, $\dot{\overline{V}}_{F^\sw}(\widetilde{x}) \neq \emptyset$.}
        \label{figure:nongenericTANG}
    \end{subfigure}
    \caption{A geometric interpretation of the set $\dot{\overline{V}}_{F^\sw}(\widetilde{x})$ in $\R^2$.}
    \label{figure:tangenttruc}
\end{figure*}
From~\eqref{eq:Mmdirder}, it follows that $\overline{a}(t)\in \dot{\overline{V}}_F(\varphi(t))$, which then implies~\eqref{eq:InclusionofDV}.
\end{proof}

\section{Switched systems}\label{sec:swSys}
We now focus our attention on system  \eqref{switching}. In contrast to \eqref{eq:sysDI}, where the vector fields may switch to any value at any point in the state space, the switching in \eqref{switching} occurs according to the pre-specified function $x\mapsto\sigma(x)$, which determines the active vector field as a function of the state. As a consequence, solutions of \eqref{switching} are also solutions of \eqref{eq:sysDI} and Theorem \ref{th:stabgen} also implies GAS of \eqref{switching}. However we search here for less conservative stability conditions.
Let $f_1, \dots, f_M$ be $\mathcal{C}^1(\mathbb{R}^n ,\mathbb{R}^n)$ in \eqref{switching}. The class of switching functions $x\mapsto \sigma(x)$ that we consider for system~\eqref{switching} is introduced in the following assumption.
\begin{assumption}\label{ass:swSig}
There exist finitely many analytic functions $H_1,\dots,H_M:\R^n\to \R$, defining open sets $D_1, \dots, D_M \subset \R^n$ by
\[
D_i : = \{x \in\R^n \, \vert \, H_i(x) > 0\},\;\;\;\;\;\forall\,i\in\{1,\dots ,M\},
\]
such that $\sigma$ is constant and equal to $i$ on each $D_i$, 
\[
\bigcup_{i=1}^M \overline{D_i}= \mathbb{R}^n, \quad \text{ and }\; D_i\cap D_j=\emptyset, \;\;\text{if}\;i\neq j.
\]
\end{assumption}
Note that, in Assumption~\ref{ass:swSig} the value of $\sigma$ remains unspecified on $\partial D_i$, i.e. the boundaries of $D_i$, $i=\{1,\dots, M\}$. Since $\partial D_i\subset \{x\in \R^n\;\vert \;H_i(x)=0\}$, and the set of zeros of an analytic function has zero Lebesgue measure, this ambiguity will not affect the solution set of~\eqref{switching}, as explained in the sequel.

Given $f_1, \dots, f_M \in \mathcal{C}^1(\mathbb{R}^n ,\mathbb{R}^n)$ and $\sigma: \mathbb{R}^n \to \{1, \dots, M\}$  satisfying Assumption~\ref{ass:swSig}, we define $f^\sw:\mathbb{R}^n \to \mathbb{R}^n$, as
\begin{equation}\label{definsw}
f^\sw(x):=f_{\sigma(x)}(x).
\end{equation}
Because the vector field in \eqref{definsw} is in general discontinuous, we define an appropriate notion of solution of \eqref{definsw}, arising from  the Filippov regularization.

 \begin{figure*}[t!]
 \begin{subfigure}[t!]{0.3\textwidth}
 \centering
 \includegraphics[scale=0.66]{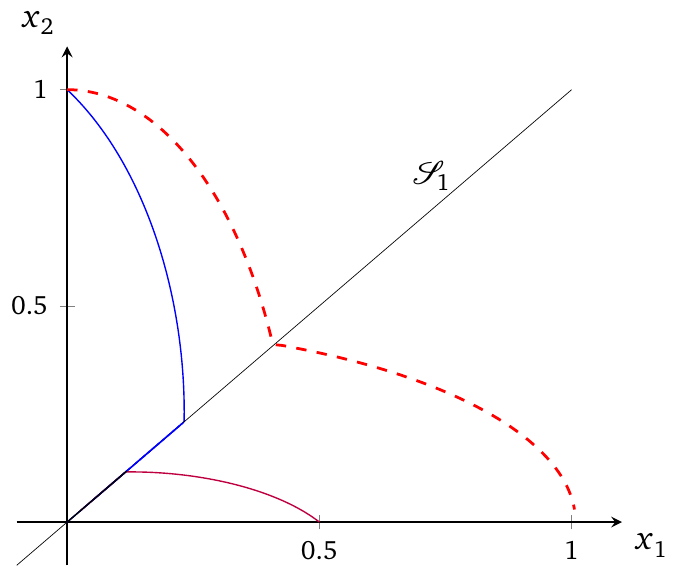}
 \caption{The blue line shows a trajectory starting from $(0,1)$, the red line a trajectory starting from $(0.5,0)$ and the  red dashed line indicates a level set of $V(x)$. }
 \label{figure:nonLinsys}
 \end{subfigure}%
 \hspace{1em}
 \centering
 \begin{subfigure}[t!]{0.3\textwidth}
 \includegraphics[scale=0.66]{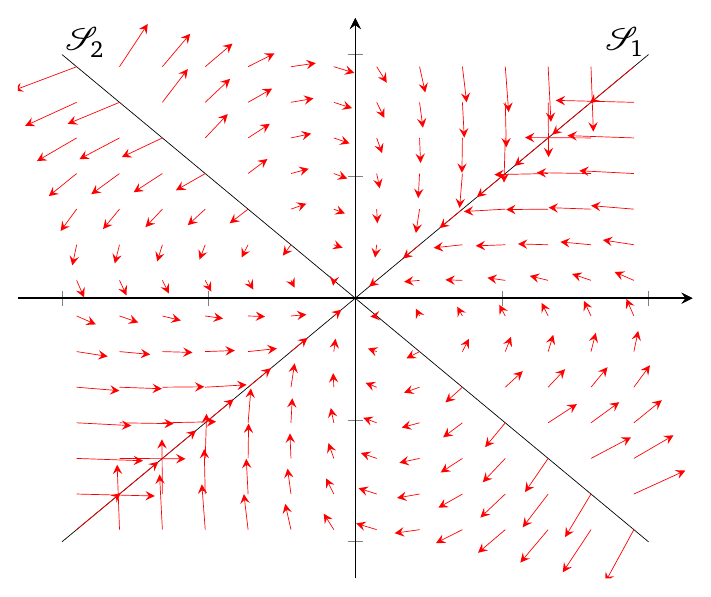}
 \caption{The red arrows represent the vector field on the whole state-space. Let us note the converging sliding motion on the line $\mathcal{S}_1$ and the nongeneric case on the line $\mathcal{S}_2$.}
 \label{figure:nongeneric}
 \end{subfigure}
 \hspace{1em}
 \begin{subfigure}[t!]{0.30\textwidth}
 \includegraphics[scale=0.66]{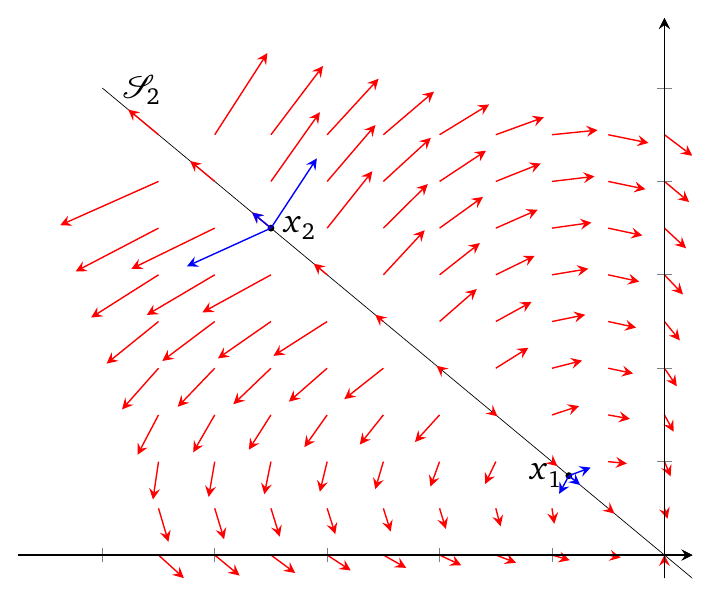}
 \caption{The blue arrows represent the elements of $F_{f^{sw}}(x)$, and in particular the convex combination of $A_1x$ and $A_2 x$ that is pointing toward 0 near the origin and diverging away from the origin.}
 \label{figure:nongeneric2}
 \end{subfigure}
 \caption{Trajectories of switched system~\eqref{eq:flowereverse} in Example~\ref{ex:inverseflower}.}
 \end{figure*}

\begin{defn}[\cite{filippov1988differential}]
Given $f^\sw:\mathbb{R}^n \to \mathbb{R}^n$ in~\eqref{definsw}, and the system 
\begin{equation}\label{noncont}
\dot x(t)= f^\sw(x(t)), 
\end{equation}
define the set-valued \textit{Filippov regularization}
\begin{equation}\label{filippov}
\dot x \in F^\sw(x):= \bigcap_{\varepsilon>0} \bigcap_{\substack{\mathcal N\subset \R^n, \\\mu(\mathcal N)=0}} \overline{\co}\bigl \{f^\sw(\B_{\varepsilon}(x) \setminus \mathcal N)\bigr \},
\end{equation}
where $\mu(\mathcal N)$ is the Lebesgue measure of $\mathcal N \subset \mathbb{R}^n$ and $\overline{\co}$ denotes the closed convex hull.
We say that $x: \mathbb{R}_{\geq0} \to \mathbb{R}^n$ is a \textit{Filippov solution} of  system (\ref{noncont}) starting at $x_0$ if 
\begin{enumerate}
\item $x$ is absolutely continuous, with $x(0)=x_0$,
\item $\dot x(t) \in F^\sw(x(t))$ for almost all $t>0$.
\end{enumerate}
\end{defn}
For the vector field $f^\sw$ in \eqref{definsw}, the computation of $F^\sw$ is simplified as observed in \cite[Page~51]{cortes} and is summarized below:
\begin{prop}\label{pr:abcde}
Consider the vector field $f^{\sw}$ in \eqref{definsw} with $\sigma$ satisfying Assumption~\ref{ass:swSig}. Introduce the set-valued map $I:\R^n \rightrightarrows \{1,\dots, M\}$ as
\begin{equation}\label{eq:defJ}
I(x) := \{i\;\vert\;x\in \overline{D_i}\}=\bigcap_{\varepsilon>0} \bigcup_{\substack{y\in \B_\varepsilon (x)\\y\in \bigcup_i D_i}} \sigma(y).
\end{equation}
Then $F^\sw$ in~\eqref{filippov} satisfies
\begin{equation}\label{eq:swFilippov}
F^\sw(x) = \co\{f_i(x)\, \vert \, i \in I(x) \}.
\end{equation}
\end{prop}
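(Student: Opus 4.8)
The plan is to prove the two set inclusions in~\eqref{eq:swFilippov} separately, exploiting the freedom in the choice of the null set $\cN$ in the Filippov operator~\eqref{filippov} in two opposite ways. As a preliminary observation, note that each $D_i$ is a nonempty open set, so the analytic function $H_i$ is not identically zero; hence its zero set, and a fortiori $\partial D_i\subset\{x\,\vert\,H_i(x)=0\}$, has zero Lebesgue measure. Consequently $\cN^\star:=\bigcup_{i=1}^M\partial D_i$ is a null set, and by disjointness of the $D_i$ together with $\bigcup_i\overline{D_i}=\R^n$, every $x\notin\cN^\star$ belongs to exactly one $D_i$, on which $f^\sw$ agrees with the continuous field $f_i$. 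I would also record the elementary facts that $i\in I(x)$ if and only if $D_i\cap\B_\varepsilon(x)\neq\emptyset$ for all $\varepsilon>0$ (which in particular yields the second equality in~\eqref{eq:defJ}), and that $F^\sw(x)$ is closed and convex, being an intersection of closed convex sets.

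For the inclusion $F^\sw(x)\subseteq\co\{f_i(x)\,\vert\,i\in I(x)\}$, the idea is to use the admissible choice $\cN=\cN^\star$. Since $M$ is finite, for each $i\notin I(x)$ there is $\varepsilon_i>0$ with $\B_{\varepsilon_i}(x)\cap D_i=\emptyset$; setting $\varepsilon_0:=\min_{i\notin I(x)}\varepsilon_i>0$ (with $\varepsilon_0=+\infty$ if $I(x)=\{1,\dots,M\}$), for every $\varepsilon<\varepsilon_0$ one has $\B_\varepsilon(x)\setminus\cN^\star\subseteq\bigcup_{i\in I(x)}D_i$, hence $f^\sw(\B_\varepsilon(x)\setminus\cN^\star)\subseteq\bigcup_{i\in I(x)}f_i(\B_\varepsilon(x))$. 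Continuity of the $f_i$ gives $f_i(\B_\varepsilon(x))\subseteq f_i(x)+\B_{\eta(\varepsilon)}(0)$ with $\eta(\varepsilon)\to0$ as $\varepsilon\to0$, so taking closed convex hulls and using that $\co\{f_i(x)\,\vert\,i\in I(x)\}$ is compact yields $\overline{\co}\{f^\sw(\B_\varepsilon(x)\setminus\cN^\star)\}\subseteq\co\{f_i(x)\,\vert\,i\in I(x)\}+\B_{\eta(\varepsilon)}(0)$. Intersecting over $\varepsilon>0$, and recalling that $F^\sw(x)$ is contained in the intersection obtained by fixing $\cN=\cN^\star$, gives the claimed inclusion.

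For the reverse inclusion, since $F^\sw(x)$ is convex it suffices to show $f_i(x)\in F^\sw(x)$ for each $i\in I(x)$. Fix such an $i$ and an arbitrary $\varepsilon>0$ and null set $\cN$. Because $x\in\overline{D_i}$, for every $\delta\in(0,\varepsilon]$ the set $D_i\cap\B_\delta(x)$ is nonempty and open, hence not contained in the null set $\cN$; picking $y_\delta\in(D_i\cap\B_\delta(x))\setminus\cN$ we have $f^\sw(y_\delta)=f_i(y_\delta)$, and letting $\delta\to0$ continuity of $f_i$ gives $f^\sw(y_\delta)\to f_i(x)$. Therefore $f_i(x)$ lies in the closure of $f^\sw(\B_\varepsilon(x)\setminus\cN)$, hence in $\overline{\co}\{f^\sw(\B_\varepsilon(x)\setminus\cN)\}$; as $\varepsilon$ and $\cN$ were arbitrary, $f_i(x)\in F^\sw(x)$.

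The main obstacle is bookkeeping the two opposite roles played by the negligible set $\cN$ in~\eqref{filippov}: the first inclusion needs a single worst-case choice $\cN^\star$ absorbing all switching surfaces---legitimate precisely because analyticity makes each $\partial D_i$ negligible and finiteness keeps $\cN^\star$ a finite union---whereas the second inclusion must hold for every admissible $\cN$, which is handled by the topological fact that a nonempty open set is never negligible. Combined with the uniform-in-$\varepsilon$ argument showing that only the indices in $I(x)$ are seen arbitrarily close to $x$, this is the crux; the remaining steps are routine continuity and convexity manipulations.
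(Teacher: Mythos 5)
Your proof is correct: the paper does not prove Proposition~\ref{pr:abcde} itself but defers to \cite[Page~51]{cortes}, and your two-inclusion argument---choosing the worst-case null set $\cN^\star=\bigcup_{i}\partial D_i$ (negligible by analyticity of the $H_i$) for the upper bound, and using that a nonempty open set is never negligible for the lower bound---is exactly the standard derivation given there. The only cosmetic caveat is your opening claim that each $D_i$ is nonempty, which Assumption~\ref{ass:swSig} does not guarantee; if some $D_i=\emptyset$ then $H_i$ could vanish identically and its zero set need not be null, but then $\partial D_i=\emptyset$ is trivially negligible, so $\cN^\star$ is still a null set and nothing in your argument breaks.
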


We underline that under Assumption \ref{ass:swSig} the Filippov regularization $F^\sw$ is an upper semi-continuous map with $F^\sw(x)$ being nonempty, compact, and convex for each $x \in \R^n$. Thus, we can study stability of switched systems in~\eqref{definsw} using the results developed in Section \ref{sec:maxmin}. Defining $\dot{\overline{V}}_{F^\sw}(x)$ as in \eqref{eq:Mmdirder} with $F$ replaced by $F^\sw$,  Theorem~\ref{th:stabgen} leads to the following statement in the context of switched systems.
\begin{thm}\label{thm:switchstab}
Consider system \eqref{switching}, and a switching law $\sigma:\mathbb{R}^n \to \{1, \dots, M\}$ satisfying Assumption~\ref{ass:swSig}. Consider an open and connected set $\cD\subset \R^n$ such that $0\in \cD$ and $K$ positive-definite functions $V_1, \dots, V_K \in \cC^1(\cD, \R)$. If, for a max-min  function $V\in\Mm \{V_1, \dots, V_K\}$, there exists $\gamma \in \cPd$ such that
\begin{equation}\label{eq:prop1ineq}
\begin{aligned}
\max \dot{\overline{V}}_{F^\sw}(x)\leq -\gamma(|x|),\;\;\forall x\in\cD,
\end{aligned}
\end{equation}
then the origin of \eqref{filippov} is AS. If $\cD= \R^n$, and each $V_j$, $j \in \{1, \dots,K\}$, is radially unbounded, then \eqref{filippov} is GAS.
\end{thm}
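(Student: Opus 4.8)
The plan is to reduce Theorem~\ref{thm:switchstab} directly to Theorem~\ref{th:stabgen} applied to the Filippov differential inclusion $\dot x \in F^\sw(x)$. The essential point is that all the hypotheses of Theorem~\ref{th:stabgen} are met by the inclusion~\eqref{filippov}: first I would record that, under Assumption~\ref{ass:swSig}, the map $F^\sw$ is upper semicontinuous with nonempty, compact, convex values --- this is exactly the sentence preceding the theorem, and it follows from Proposition~\ref{pr:abcde} together with the fact that $I(x)$ in~\eqref{eq:defJ} is itself upper semicontinuous (being the intersection over shrinking balls of index sets of neighboring open regions) and that the $f_i$ are continuous. Hence~\eqref{filippov} is a legitimate instance of the differential inclusion~\eqref{eq:diffinc}.

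Next I would simply invoke Theorem~\ref{th:stabgen} with $F = F^\sw$. The base functions $V_1,\dots,V_K \in \cC^1(\cD,\R)$ are positive definite by hypothesis, and $V \in \Mm(V_1,\dots,V_K)$; by Proposition~\ref{lm:gengrad}, $\dot{\overline V}_{F^\sw}(x)$ has the explicit form $\{a \in \R \mid \exists f \in F^\sw(x): a = \nabla V_\ell(x)^\top f,\ \forall \ell \in \alpha_V(x)\}$, which is precisely how $\dot{\overline V}_{F^\sw}$ is defined in the statement (``as in~\eqref{eq:Mmdirder} with $F$ replaced by $F^\sw$''). The standing assumption~\eqref{eq:prop1ineq} is verbatim the decrease condition~\eqref{eq:decreasing} of Theorem~\ref{th:stabgen} with $F = F^\sw$. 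Therefore Theorem~\ref{th:stabgen} yields that $V$ is a Lyapunov function for~\eqref{filippov} and that the origin is AS; and when $\cD = \R^n$ with each $V_j$ radially unbounded, the positive definite lower bound $\underline\chi$ built from the $V_j$ is class $\cK_\infty$, so the same theorem gives GAS. The only subtlety to mention explicitly is that a Filippov solution of~\eqref{noncont} is by definition an absolutely continuous solution of~\eqref{filippov}, so the stability conclusions for the inclusion transfer immediately to the switched system; I would also note that the ambiguity of $\sigma$ on the measure-zero boundary sets $\partial D_i$ is irrelevant because Filippov solutions satisfy $\dot x(t) \in F^\sw(x(t))$ only for almost every $t$ and $F^\sw$ in~\eqref{eq:swFilippov} does not depend on the boundary values of $\sigma$.

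I do not expect a genuine obstacle here --- the theorem is essentially a corollary of Theorem~\ref{th:stabgen} packaged for the switched-system setting --- so the ``hard part'' is really just the bookkeeping: confirming that $F^\sw$ inherits the upper-semicontinuity/compact-convex-values hypothesis needed to apply the earlier machinery (in particular Lemma~\ref{lm:hard}, which is the technical engine behind Theorem~\ref{th:stabgen}), and being careful that the regularity-free generalization in Lemma~\ref{lm:hard} indeed covers $V \in \Mm(V_1,\dots,V_K)$ with $F = F^\sw$, which it does since Lemma~\ref{lm:hard} is stated for an arbitrary differential inclusion of the form~\eqref{eq:diffinc}. Thus the proof is short: verify the hypotheses, cite Theorem~\ref{th:stabgen}, and translate the conclusion back to Filippov solutions of~\eqref{switching}.
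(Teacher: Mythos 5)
Your proposal is correct and follows essentially the same route as the paper, which gives no separate proof of Theorem~\ref{thm:switchstab}: it simply notes (in the paragraph preceding the statement) that under Assumption~\ref{ass:swSig} the Filippov map $F^\sw$ is upper semicontinuous with nonempty, compact, convex values, and then presents the theorem as a direct instance of Theorem~\ref{th:stabgen} with $F=F^\sw$. Your additional bookkeeping (Filippov solutions being exactly the solutions of~\eqref{filippov}, and the irrelevance of $\sigma$ on the measure-zero boundaries) is consistent with what the paper leaves implicit.
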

Theorem~\ref{thm:switchstab} simultaneously accounts for points $x$ where $I(x)$ (associated to $\sigma$), and/or points where $\alpha_V(x)$ (associated to $V$) are multivalued. Interesting things happen when these points coincide, namely when $V$ mimics the patchy shape of $F^\sw$.

\begin{oss}\label{oss:construction}
Consider the simplest non-trivial case, taking an $\wt x\in \cD$ such that $I(\wt x)=\{1,2\}$ and $\alpha_V(\wt x)=\{\ell_1, \ell_2\}$, for some $\ell_1, \ell_2 \in \{1, \dots, K\}$. We may give a geometric interpretation of \eqref{eq:prop1ineq}. Parameterizing an $f\in F^\sw (\wt x)$ with $f=\lambda f_1(\wt x)+(1-\lambda)f_2(\wt x)$ in expression~\eqref{eq:Mmdirder}, we have that  $\dot{\overline{V}}_{F^{sw}}(\wt x) \neq \emptyset$, if and only if there exists $\lambda \in [0,1]$ such that (we omit the argument $\wt x$ of the gradients to simplify the notation),
\begin{equation*}
\lambda (\nabla V_{\ell_1} - \nabla V_{\ell_2})^\top f_1(\wt x) =- (1-\lambda) (\nabla V_{\ell_1} - \nabla V_{\ell_2})^\top f_2(\wt x),
\end{equation*}
which holds  only if
\begin{equation*}
\left ((\nabla V_{\ell_1} - \nabla V_{\ell_2})^\top f_1(\wt x)\right)\left((\nabla V_{\ell_1} - \nabla V_{\ell_2})^\top f_2(\wt x)\right)\leq 0.
\end{equation*}
It follows that $\dot{\overline{V}}_{F^{sw}}(\wt x) \neq \emptyset$ only if the vector fields $f_1(\wt x)$ and $f_2(\wt x)$ are such that the inner product of their respective components, normal to the hypersurface $\cS_{12}=\{ x \in \R^n \, \vert \, V_{\ell_1}(x)=V_{\ell_2}(x) \}$ is negative, namely they do not point both on the same side of $\cS_{12}$. Figure~\ref{figure:tangenttruc} provides an illustration of this fact in the planar case.
\end{oss}

In Example~\ref{ex:inverseflower}, an illustration of this idea is provided.

 \begin{myexample}
 \label{ex:inverseflower}

 We consider a system of the form \eqref{switching} and analyze its stability using~Theorem~\ref{thm:switchstab}.
 Given
 $A_1= \begin{bsmallmatrix}
   -0.1 & 1 \\
    -5 & -0.1
  \end{bsmallmatrix}$, $A_2= \begin{bsmallmatrix}
     -0.1 & -5 \\
    1 & -0.1
  \end{bsmallmatrix}$
 and $Q=\begin{bsmallmatrix}
    1 & 0 \\
     0 & -1
  \end{bsmallmatrix}$,
 consider the switched system
 \begin{equation}
 \label{eq:flowereverse}
 \dot x=\begin{cases} 
 f_1(x):=A_1x - b \wt g(x), & \text{if}\;\; x^\top Qx < 0,\\
 f_2(x):=A_2x - b \wt g(x), & \text{if}\;\; x^\top Qx > 0,
 \end{cases}
 \end{equation}
 where $b \geq 0$, and  function $\wt g : \R^2 \to \R^2$ is defined as
 \begin{equation*}
 \wt g(x_1, x_2)= \begin{bmatrix}
  g(x_1)\\
 g(x_2)
 \end{bmatrix} = \begin{bmatrix}
  \arctan(x_1)\\
 \arctan(x_2)
 \end{bmatrix}.
 \end{equation*}
 System~\eqref{eq:flowereverse} can be written as~\eqref{definsw}, and satisfies Assumptions~\ref{ass:swSig} with $H_2(x)=-H_1(x)=x^\top Q x$.
Consider now $
 P_1=
 \begin{bsmallmatrix}
 5 & 0 \\
    0  &  1
   \end{bsmallmatrix},$ $ P_2=
 \begin{bsmallmatrix}
 1 & 0 \\
    0  &  5
   \end{bsmallmatrix}$,
we prove that $V(x)= \min \{x^\top P_1 x, x^\top P_2 x\}$ is a Lyapunov function in the sense of Theorem~\ref{thm:switchstab}.
 Noting that $P_1-P_2=4Q$, we can say that the points where $V$ is not differentiable coincide with the points where $\sigma$ is not continuous. To show inequality \eqref{eq:prop1ineq}, we proceed in three steps:\\
 \emph{Step~1: Each subsystem is GAS.}
Analyzing each subsystem where $V$ is differentiable, it can be shown that
 \begin{equation*}
 \nabla V(x)^\top f \leq -0.1|x|^2,\;\, \forall f \in F^\sw(x), \quad \text{if } x^\top Qx \neq 0.
 \end{equation*}
 The next step is to check the inequality \eqref{eq:prop1ineq} where $V$ is not differentiable, that is on the lines $\cS_1 :=\{ x \in \R^2 \; \vert \; x_2=x_1 \},$ and $\cS_2 :=\{x \in \R^2 \; \vert \; x_2=-x_1 \}$, so that $\cS_1 \cup \cS_2$ is the set where $x^\top Q x=0$.\\
 \emph{Step~2: Line $\cS_1$ with converging sliding motion.} We compute the set-valued derivative $\dot{\overline{V}}_{F^\sw}(x)$ for a point $x \in \cS_1$. Proceeding as in Remark~\ref{oss:construction}, based on~\eqref{eq:Mmdirder}, it is seen that
 \begin{equation}\label{eq:gradS10}
  \lambda x^\top (P_1-P_2)f_1(x)+ (1-\lambda) x^\top (P_1-P_2)f_2(x)= 0
 \end{equation}
 holds with $\lambda = 0.5$, for every $x \in \cS_1$. Consequently, for each $x \in \cS_1$, we have 
\[ \dot{\overline{V}}_{F^\sw}(x)=\left \{2 x^\top P_1 \left (\frac{1}{2}f_1(x)+\frac{1}{2}f_2(x)\right)\right\}
 = \left \{ x^\top P_1 \left(A_1 x+A_2 x\right)- 2 b\,x^\top P_1 \wt g(x)\right\}
\]. By construction, the same singleton would be obtained if we replaced $P_1$ by $P_2$.
Substituting the values of $A_i$ and $P_i$, $i = 1,2$, it thus follows that $\max \dot{\overline{V}}_{F^\sw}(x)<-\frac{25}{2}|x|^2$, $\forall x \in \cS_1$.
In Figure \ref{figure:nonLinsys}, we have plotted two converging \virgolette{sliding} solutions.\\
\emph{Step~3: Line $\cS_2$ with diverging sliding motion.}
Choosing $x\in \cS_2$, and following the same reasoning as in Step~2, it is seen that the set $\dot{\overline{V}}_{F^\sw}(x)$ is nonempty because \eqref{eq:gradS10} holds with $\lambda = 0.5$, for every $x \in \cS_2$. As a result, $\dot{\overline{V}}_{F^\sw}(x)= \left \{ x^\top P_1 (A_1 x+A_2 x)+2b\,x^\top P_1 \wt g(x)\right\}$.
Analyzing the linear term, we have 
 $
 x^\top P_1 (A_1 x+ A_2 x)=22.8\,x_1^2
 $
 ; for the nonlinear term, for each $x \in \cS_2$, we have
 \[
 \begin{aligned}
 -2b \, x^\top P_1 \wt g(x)= -12 b \, x_1 g(x_1).
 \end{aligned}
 \]
 For $x_1$ small enough, we see that 
 $
 x_1 g(x_1) =x_1 \arctan(x_1)= x_1^2 + o(x_1^2)
 $
 where $\lim_{x_1 \to 0} \frac{o(x_1^2)}{x_1^2} = 0$. Thus, for sufficiently large values of $b > 0$, there exists a $\delta>0$ such that
 \begin{equation}\label{eq:Vbardotex}
 \dot{\overline{V}}_{F^\sw}(x) = 22.8\,x_1^2-12bx_1^2 + o(x_1^2) <- 0.1|x|^2,
 \end{equation}
 if $x\in \cS_2$ and $|x|<\delta$.\\
 Combining the three steps we proved~\eqref{eq:prop1ineq} (with $\gamma(|x|)=0.1|x|^2$) for a small open neighborhood $\cD=\B_\delta(0)$ of the origin, and Theorem~\ref{thm:switchstab} establishes local asymptotic stability of the origin by using the minimum of two quadratics as a Lyapunov function.
 Condition \eqref{eq:prop1ineq} fails to be true on the line $\cS_2$, away from the origin regardless of the selection of $b>0$. Hence, there exist Filippov solutions, starting in $\cS_2$ with large enough initial condition that stay in $\cS_2$ and diverge; see Figures \ref{figure:nongeneric} and \ref{figure:nongeneric2} for an illustration.
 We want to underline that, since $x^\top (A_1^\top P_2+A_1P_2)x>0$ \emph{for all} $x \in \cS_2$, recalling~\eqref{eq:oldgenderiv}, it holds that $\max \dot V_F(x)>0$, $\forall \;x\in \cS_2$. This observation again shows the utility of using Lie derivative compared to Clarke's derivative in~\eqref{eq:oldgenderiv}, which does not allow establishing asymptotic stability of the origin.
 
 \end{myexample}

\section{Linear Switched Systems and Quadratic Basis}\label{sec:linear}

We are now interested in applying Theorem~\ref{thm:switchstab} to switched  systems~\eqref{definsw} with \emph{linear} vector fields and a partition given by symmetric cones. More precisely, given $A_1, \dots, A_M\in \R^{n \times n}$, we consider the differential inclusion
\begin{equation}\label{eq:linear}
\dot x \in F_{\text{lin}}^\sw(x) := \co\{A_ix\, \vert \, i \in I(x) \}.
\end{equation}
The set valued map $I:\R^n \rightrightarrows \{1,\dots, M\}$ arises from a switching function $x\mapsto \sigma(x)$ satisfying Assumption~\ref{ass:swSig}, where the sets $D_1,\dots, D_M\subset\R^n$ are defined by
\begin{equation}\label{eq:SwitchingCones}
D_i:=\{x\in \R^n\;\vert\;x^\top Q_ix>0\},
\end{equation}
with properly chosen symmetric matrices $Q_i\in \Sym(\R^{n}):=\{R\in\R^{n\times n}\;\vert\;R^\top=R\}$ and $Q_i$ not negative semidefinite for each $i\in\{1,\dots ,M\}$.
The sets $D_i$ in~\eqref{eq:SwitchingCones} are \emph{symmetric open cones} (if $x\in D_i$ then $\lambda x\in D_i$ for all $\lambda\in \R\setminus\{0\}$).
The map $I:\R^n\rightrightarrows\{1,\dots,M\}$ in~\eqref{eq:defJ}, can be rewritten in this context as follows:
\[
I(x):=\{i\in \{1,\dots,M\}\;\vert\;x^\top Q_ix\geq0 \}.
\]
Indeed $Q_i$ not negative semidefinite implies $\overline{D_i}=\{x\in \R^n\,\vert\,x^\top Q_ix\geq0 \}$.
\begin{oss}\label{rmk:StatePartition} Another possible kind of partition of the state space arises by considering \emph{polyhedral cones} (with a common vertex at the origin), that is sets $D_1,\dots, D_M\subset\R^n$ (satisfying Assumption~\ref{ass:swSig}) defined by linear inequalities
$
D_i:=\{x\in \R^n\;\vert K_ix\geq_c 0\}$, where $K_i\in \R^{k_i\times n}$, for all $i\in \{1,\dots, M\}$ and $\geq_c$ denotes the component-wise relation. The techniques employed in what follows could be adapted also to this case.
\end{oss}
We restrict our attention to  Lyapunov functions \emph{homogeneous of degree 2}, considering max-min functions obtained from quadratic forms. This choice is motivated by the fact that, as proved in~\cite{HuBlanchini2010}, max of quadratics Lyapunov functions are universal (existence is sufficient \emph{and necessary}) for GAS of linear differential inclusions (LDI). For linear state-dependent switched systems~\eqref{eq:linear}, as we noted, non-convex (but still homogeneous) Lyapunov functions are required, and thus the \emph{min}-operator was added to have this flexibility. The study of universality for max-min of quadratics for~\eqref{eq:linear} is open for further research. The construction of ``piecewise'' quadratic Lyapunov functions, in similar settings, is studied also in \cite[]{johansson}, \cite[]{goebel}, and references therein.

\begin{defn}
Given $K$ distinct, symmetric and positive definite matrices $P_1, \dots P_K \in \mathbb{R}^{n \times n}$, a \textit{max-min of quadratics} is denoted by $V \in \Mmq(P_1,\dots,P_K)$, and is defined as
\begin{equation}\label{Mm}
V(x)=\max_{j \in\{1, \dots, J\}} \left \{\min_{k \in S_j} \left\{x^\top P_k x\right\} \right \},
\end{equation}
where $J\geq 1$ and for each $j \in \{1, \dots, J \}$, the set $S_j \subset \{1, \dots, K\}$ is nonempty.
\end{defn}

\begin{oss}[Homogeneity]\label{rmk:Homogeneity}
Since the sets $D_i$ are symmetric cones, the set-valued map in~\eqref{eq:linear} is \emph{homogeneous of degree~1}, in the sense that
$
F^\sw_{\text{lin}}(\lambda x)=\lambda F^\sw_{\text{lin}}(x), \;\forall x\in \R^n, \forall \lambda\in \R$. \\
Similarly, a max-min of quadratics function defined as in~\eqref{Mm} is \emph{homogeneous of degree 2}, that is
$
V(\lambda x)=\lambda^2V(x),\;\forall x\in \R^n, \forall \lambda\in \R
$,  
 and $\alpha_V$ is constant along rays emanating from the origin, that is
$
\alpha_V(\lambda x)=\alpha_V(x),\;\;\forall x\in \R^n,\;\forall \lambda \in \R\setminus\{0\}.
$
\end{oss}

\subsection{Stability Conditions with Set-Valued Lie Derivative}
We first specialize the conditions of Theorem~\ref{thm:switchstab} for system~\eqref{eq:linear} with $V$ of the form \eqref{Mm}.
To this end, points $x\in \R^n$ where $\alpha_V(x)=\{\ell(x)\}$ is a singleton are easily characterized because they satisfy $x\in \inn(C_{\ell(x)})$. Instead, consider any $x\in \R^n$, such that $\alpha_V(x)=\{\ell_1, \dots, \ell_p\}$ with $p>1$, namely any point $x$ where the locally Lipschitz function $V$ is not continuously differentiable. Define now the probability simplex of dimension $m$ as 
\[
\Lambda^m_0:=\{\lambda\in \R^m_{\geq0}\;\vert\;\sum_{j=1}^m\lambda_j=1\}.
\]
Denoting $I(x) = \{i_1, \dots, i_m\}\subseteq \{1, \dots, M\}$, and proceeding as in Remark~\ref{oss:construction}, by~\eqref{eq:Mmdirder} we have that $\dot{\overline{V}}_{F^\sw_{\text{lin}}}(x) \neq \emptyset$ if and only if there exist $\lambda=(\lambda_1,\dots,\lambda_m)\in \Lambda^m_0$ such that
\begin{equation}\label{eq:linearLiequality}
\nabla V_{\ell_{k+1}}(x)^\top \left( \sum_{j=1}^m \lambda_j A_{i_j} x \right)=\nabla V_{\ell_k}(x)^\top \left(\sum_{j=1}^m \lambda_i A_{i_j} x \right),
\end{equation}
for each $k \in \{1, \dots, p-1\}$. Based on~\eqref{eq:linearLiequality}, define the set $\Lambda(x,\{A_{i}\}_{i\in I(x)})\subset\Lambda^m_0$ as
\begin{equation}\label{eq:bigsystem1}
\lambda\in \Lambda(x,\{A_{i}\}_{i\in I(x)})\,\Leftrightarrow
\begin{cases}
\sum_{j=1}^m \lambda_j x^\top (P_{\ell_2}-P_{\ell_1})A_{i_j} x =0,\\
 \hskip 0.75cm \vdots \hskip0.75cm \vdots \hskip0.75cm \vdots \\
\sum_{j=1}^m \lambda_j x^\top (P_{\ell_p}-P_{\ell_{p-1}})A_{i_j} x =0,
\end{cases}
\end{equation}
where $\lambda=(\lambda_1,\dots,\lambda_m)\in \Lambda^m_0$.
Then, recalling~\eqref{eq:Mmdirder}, we have 
\begin{equation}\label{eq:doubleimplication}
\ell \in \alpha_V(x)\;\Rightarrow\;\dot{\overline{V}}_{F^\sw_{\text{lin}}}(x)=\left\{ \begin{aligned} & 2x^\top P_{\ell}(\lambda_1 A_{i_1}+\dots + \lambda_m A_{i_m})x  \; : \; \\ & (\lambda_1, \cdots, \lambda_m) \in \Lambda(x,\{A_{i}\}_{i\in I(x)}) \end{aligned} \right\}.
\end{equation}
The equivalence  \eqref{eq:doubleimplication} is used to prove the next corollary of Theorem~\ref{thm:switchstab}.
\begin{cor}\label{cor:linstab}
Consider system \eqref{eq:linear} and a max-min of quadratics $V \in \Mmq(P_1, \dots, P_K)$, where $P_1, \dots, P_K$ are symmetric, positive-definite,  and pairwise distinct matrices.
Suppose that there exists $\varepsilon > 0$ such that
\begin{enumerate}[leftmargin=0.6cm]
\item[(i)] For each $x \in \R^n$ with $\alpha_V(x)=\{\ell\}$ and $I(x)=\{i\}$ being singletons, it holds that
\begin{equation}\label{eq:FirstCondLinCor}
x^\top (A_i^\top P_{\ell}+P_{\ell}A_i)x \le - \varepsilon \vert x \vert^2.
\end{equation}
\item[(ii)] For each $x \in \R^n$ satisfying $\alpha_V(x)=\{\ell_1,\dots, \ell_p\}\subset \{1,\dots, K\}$, with $p>1$, and $I(x)=\{i_1,\dots, i_m\}\subset\{1,\dots,M\}$ with $m>1$, there exists $\ell \in \alpha_V(x)$ such that
\begin{equation}\label{eq:SecondCondLinCor}
\sum_{i\in I(x)} \lambda_i x^\top (P_{\ell} A_i + A_i^\top P_{\ell})x \le  - \varepsilon \vert x \vert^2,
\end{equation}
for all $(\lambda_1, \dots, \lambda_m) \in \Lambda(x,\{A_i\}_{i\in I(x)})$.
\end{enumerate}
Then the origin of \eqref{eq:linear} is GAS.
\end{cor}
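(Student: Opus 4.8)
The plan is to verify the hypotheses of Theorem~\ref{thm:switchstab} for the differential inclusion~\eqref{eq:linear}. First, \eqref{eq:linear} is the Filippov regularization of the switched vector field $x\mapsto A_{\sigma(x)}x$ for a $\sigma$ built from the analytic functions $H_i(x)=x^\top Q_ix$, so Assumption~\ref{ass:swSig} and Proposition~\ref{pr:abcde} apply, $F^\sw_{\text{lin}}$ is upper semicontinuous with nonempty compact convex values, and each base function $x\mapsto x^\top P_kx$ is radially unbounded. Hence it suffices to produce $\gamma\in\cPd$ with $\max\dot{\overline{V}}_{F^\sw_{\text{lin}}}(x)\le-\gamma(|x|)$ for every $x\in\R^n$, and I will show this holds with $\gamma(s)=\varepsilon s^2$ for the $\varepsilon$ in the statement. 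The argument splits $\R^n\setminus\{0\}$ into the four cases determined by whether $\alpha_V(x)$ and $I(x)$ are singletons, using in each case the explicit description~\eqref{eq:doubleimplication} of $\dot{\overline{V}}_{F^\sw_{\text{lin}}}(x)$. I record two elementary facts to be used repeatedly: (a) $\alpha_V(x)=\{\ell\}$ if and only if $x\in\inn(C_\ell)$ --- the ``if'' direction holds because on a neighbourhood contained in $C_\ell$ one has $V=V_\ell$, and the ``only if'' direction follows from Lemma~\ref{lemma:prelimnLemma} together with the fact (by polarization) that $x^\top P_jx=x^\top P_\ell x$ on an open set forces $P_j=P_\ell$, contradicting pairwise distinctness; (b) $I(x)=\{i\}$ if and only if $x\in D_i$, the nontrivial ``only if'' direction using that the open set $\R^n\setminus\bigcup_{j\ne i}\overline{D_j}$, which contains $x$ and is contained in $\overline{D_i}$, lies in $\inn(\overline{D_i})=D_i$ (the last identity being a consequence of Assumption~\ref{ass:swSig}).

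Next come the two ``extremal'' cases. If $\alpha_V(x)=\{\ell\}$ and $I(x)=\{i\}$ are both singletons, then $\Lambda(x,\{A_i\}_{i\in I(x)})$ is the single point $\{1\}$ and \eqref{eq:doubleimplication} gives $\dot{\overline{V}}_{F^\sw_{\text{lin}}}(x)=\{x^\top(A_i^\top P_\ell+P_\ell A_i)x\}$, which is $\le-\varepsilon|x|^2$ by~\eqref{eq:FirstCondLinCor}. If $\alpha_V(x)=\{\ell_1,\dots,\ell_p\}$ and $I(x)=\{i_1,\dots,i_m\}$ with $p,m>1$, there are two subcases: if $\Lambda(x,\{A_i\}_{i\in I(x)})=\emptyset$ then $\dot{\overline{V}}_{F^\sw_{\text{lin}}}(x)=\emptyset$ and $\max\emptyset=-\infty$; otherwise, choosing the index $\ell$ furnished by condition (ii) and applying~\eqref{eq:doubleimplication} with that $\ell$, every element of $\dot{\overline{V}}_{F^\sw_{\text{lin}}}(x)$ equals $\sum_{i\in I(x)}\lambda_i x^\top(P_\ell A_i+A_i^\top P_\ell)x$ for some $\lambda\in\Lambda(x,\{A_i\}_{i\in I(x)})$, hence is $\le-\varepsilon|x|^2$ by~\eqref{eq:SecondCondLinCor}.

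The crux is the pair of mixed cases. If $\dot{\overline{V}}_{F^\sw_{\text{lin}}}(x)=\emptyset$ there is nothing to prove, so assume it is nonempty. By~\eqref{eq:doubleimplication}, and since a linear functional on the simplex attains its maximum at a vertex, $\max\dot{\overline{V}}_{F^\sw_{\text{lin}}}(x)$ is bounded above by $\max\{x^\top(A_i^\top P_\ell+P_\ell A_i)x:\ell\in\alpha_V(x),\,i\in I(x)\}$, so it suffices to bound each such term by $-\varepsilon|x|^2$. Fix $\ell\in\alpha_V(x)$ and $i\in I(x)$. If $\alpha_V(x)=\{\ell\}$ (so $m>1$), then by fact (a) a whole neighbourhood $\cU$ of $x$ lies in $\inn(C_\ell)$; since $x\in\overline{D_i}$, $\cU$ contains points of $D_i$ arbitrarily close to $x$, yielding a sequence $x_k\to x$ with $x_k\in\inn(C_\ell)\cap D_i$, i.e.\ $\alpha_V(x_k)=\{\ell\}$ and $I(x_k)=\{i\}$ by facts (a)--(b). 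Symmetrically, if $I(x)=\{i\}$ (so $p>1$), then by fact (b) a neighbourhood of $x$ lies in $D_i$, while $\ell\in\alpha_V(x)$ gives $x\in\cl(\inn(C_\ell))$, again producing a sequence $x_k\to x$ with $x_k\in D_i\cap\inn(C_\ell)$. In either situation \eqref{eq:FirstCondLinCor} applies at each $x_k$, giving $x_k^\top(A_i^\top P_\ell+P_\ell A_i)x_k\le-\varepsilon|x_k|^2$; letting $k\to\infty$ and using continuity of the quadratic form and of $|\cdot|^2$ yields $x^\top(A_i^\top P_\ell+P_\ell A_i)x\le-\varepsilon|x|^2$, as required.

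Combining the four cases gives $\max\dot{\overline{V}}_{F^\sw_{\text{lin}}}(x)\le-\varepsilon|x|^2$ for all $x\in\R^n$, and Theorem~\ref{thm:switchstab} then yields that the origin of~\eqref{eq:linear} is GAS. The step I expect to be the main obstacle is the mixed-case clustering argument of the previous paragraph: one must check carefully that the ``generic'' sets $D_i$ and $\inn(C_\ell)$ accumulate at the degenerate point $x$ in the precise combined manner needed, which is exactly where facts (a) and (b) --- and hence the analyticity built into Assumption~\ref{ass:swSig} together with the pairwise distinctness of the $P_k$ --- enter; the remainder reduces to routine manipulations of quadratic forms through~\eqref{eq:doubleimplication}.
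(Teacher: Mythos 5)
Your proof is correct and follows essentially the same route as the paper's: reduce to Theorem~\ref{thm:switchstab}, split into the four cases according to whether $\alpha_V(x)$ and $I(x)$ are singletons, apply (i) and (ii) directly in the pure cases, and handle the mixed cases by extracting sequences $x_k\to x$ with $x_k\in D_i\cap\inn(C_\ell)$ and passing to the limit by continuity. The only (harmless) deviation is that in the case where $\alpha_V(x)$ is multivalued and $I(x)=\{i\}$ you bound every cross term $x^\top(A_i^\top P_\ell+P_\ell A_i)x$ separately, whereas the paper first uses~\eqref{eq:bigsystem1} to note that nonemptiness of $\dot{\overline{V}}_{F^\sw_{\text{lin}}}(x)$ forces all these values to coincide and then bounds a single one.
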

\begin{proof}
It follows from Theorem~\ref{thm:switchstab} that the origin of \eqref{eq:linear} is GAS if \eqref{eq:prop1ineq} holds for all $x \in \R^n$. We will proceed by analyzing four cases, depending on whether the sets $I(x)$ and $\alpha_V(x)$ are singletons or not.\\ First, consider $x$ such that $\alpha_V(x)=\{\ell\}$ and $I(x)=\{i\}$ are singletons. In this case,
$$
\dot{\overline{V}}_{F^\sw_{\text{lin}}}(x) = \left \{x^\top (A_i^\top P_{\ell}+P_{\ell}A_i)x \right\} \le  - \varepsilon \vert x \vert^2,
$$
where the inequality is due to condition {\em (i)}.\\
Secondly, for a point $x$ with $\alpha_V(x)=\{\ell_1,\dots, \ell_p\}$, with $p>1$, and $I(x) = \{i_1,\dots, i_m\}$ with $m>1$, it follows from~\eqref{eq:doubleimplication} and condition \emph{(ii)} that $\max \dot{\overline{V}}_{F^\sw_{\text{lin}}} (x) \le - \varepsilon \vert x \vert^2$.\\
Next, consider the case where $\alpha_V(x)=\{\ell\}$ is a singleton and $I(x)=\{i_1,\dots, i_m\}$ with $m>1$, that is a point where $V$ is continuously differentiable and the set $F^\sw_{\text{lin}}(x)$ in~\eqref{eq:linear} is multivalued. We thus have $\partial V(x)=\{\nabla V_\ell(x)\}$, and from linearity we have
\begin{equation}\label{eq:LieDeriLinearCorollary}
\max \dot{\overline{V}}_{F^\sw_{\text{lin}}} (x)\leq \max_{\lambda\in \Lambda^m_0} \sum_{j=1}^m 2 \lambda_j x^\top P_\ell A_{i_j}x =2x^\top P_\ell A_{i^\star} x,
\end{equation}
where $i^\star\in\argmax_{i=i_1,\dots,i_m}2x^\top P_\ell A_{i} x$. Since $i^\star\in I(x)$, by~\eqref{eq:defJ} $x\in \overline{D_{i^\star}}$ ; from item~\emph{(i)} we have 
\[
x_k^\top (A_{i^\star}^\top P_{\ell}+P_{\ell}A_{i^\star})x_k \le - \varepsilon \vert x_k \vert^2. 
\]
for some sequence $x_k\to x$ with $x_k\in D_{i^\star}\cap\inn(C_\ell)$, $\forall k\in \N$.
By continuity we thus have $x^\top (A_{i^\star}^\top P_{\ell}+P_{\ell}A_{i^\star})x \le - \varepsilon \vert x \vert^2$, and from~\eqref{eq:LieDeriLinearCorollary} we have
$
\max \dot{\overline{V}}_{F^\sw_{\text{lin}}}(x)\le  - \varepsilon \vert x \vert^2
$.\\
Finally, we consider the case $\alpha_V(x)=\{\ell_1,\dots,\ell_p\}$ with $p>1$ and $I(x)=\{i\}$, namely a point where the function $V$ is not continuously differentiable and the set $F^\sw_{\text{lin}}(x)$ is a singleton, since $x\in D_i$.
If $\dot{\overline{V}}_{F^\sw_{\text{lin}}}(x)=\emptyset$ we are done. Otherwise, in view of~\eqref{eq:bigsystem1}, $\dot{\overline{V}}_{F^\sw_{\text{lin}}}(x)\neq\emptyset$ implies
\begin{equation}\label{eq:LieDeriLinearCorollary2}
\{2x^\top P_{\ell_1}A_ix\}=\dots=\{2x^\top P_{\ell_p}A_ix\}=\dot{\overline{V}}_{F^\sw_{\text{lin}}}(x).
\end{equation}
Considering, without loss of generality, the index $\ell_1\in \alpha_V(x)$, by Definition~\ref{def:alpha} and recalling that $D_i$ is open, we can consider a sequence $x_k\to x$ such that $x_k\in D_i\cap\inn(C_{\ell_1})$, for all $k\in \N$. By condition \emph{(i)} we have
\[
x_k^\top (A_i^\top P_{\ell_1}+P_{\ell_1}A_i)x_k \le - \varepsilon \vert x_k \vert^2, \;\;\forall\, k\in \N. 
\]
By continuity  $x^\top (A_i^\top P_{\ell_1}+P_{\ell_1}A_i)x \le - \varepsilon \vert x \vert^2$; recalling~\eqref{eq:LieDeriLinearCorollary2}, it implies that
$\max \dot{\overline{V}}_{F^\sw_{\text{lin}}}(x)\le  - \varepsilon \vert x \vert^2
$.\\
Having analyzed all the cases, we conclude that \eqref{eq:prop1ineq} holds for all $x \in \R^n$ and the assertion follows from Theorem~\ref{thm:switchstab}.
\end{proof}

\subsection{Checking Item (i) of Corollary~\ref{cor:linstab}}\label{subsec:Item(i)}
In this section, we exploit the properties of system~\eqref{eq:linear} and the family of candidate max-min Lyapunov functions in~\eqref{Mm} to computationally check condition \emph{(i)} of Corollary~\ref{cor:linstab}. We do so by following two steps: first, fixing $K\geq 1$, $J\geq1$, nonempty subsets $S_1,\dots, S_J\subset\{1,\dots, K\}$, and hence the corresponding max-min combination in~\eqref{Mm}, we construct an auxiliary function $\Phi$, which characterizes the regions where $\alpha_V:\R^n\rightrightarrows\{1,\dots,K\}$ is single-valued. Notably, this function is independent of $P_1,\dots,P_K$.
Secondly, we use $\Phi$ to compute matrices $P_1,\dots, P_K$ satisfying item~\emph{(i)} of Corollary~\ref{cor:linstab} by only checking the feasibility of a finite set of matrix inequalities.
The details of implementing these two steps now follow:
\setcounter{algocf}{-1}
\begin{algorithm}[!b]\caption{The function $\Phi:\bS_K\to \{1,\dots K\}$.}\label{alg:FunctionPhi}
\KwData{$K\in \N$, $J\geq 1$, $S_1,\dots,S_J\subset\{1,\dots K\}$}
\KwIn{$\rho=(\rho_1,\dots, \rho_K)\in\bS_K$}
\KwOut{$\text{out}=\Phi_{K,J,S_1,\dots,S_J}(\rho)$}
\SetKwFunction{FMain}{$\Phi_{K,J,S_1,\dots,S_J}$}
    \SetKwProg{Fn}{Function}{:}{}
    \Fn{\FMain{$\rho$}}{
        Set: $\text{out}=0$, $S_{\min}=\emptyset$,    

        \For{$ (j=1,\, j\leq J,\,j=j+1) $}{
        \For{$(i= 1,\,i\leq K,\,i=i+1)$}{ \If{$\rho_i\in S_j$}
        {Add $\rho_i$ to $S_{\min}$, \textbf{break} }  } 
    
} \For{$(j=J,\, j\geq0, \,j=j-1)$}{\If{$\rho_j\in S_{\min}$} {$\text{out}=\rho_j$, \textbf{break}} }

\textbf{return} $\text{out} $}
\vskip0.2cm
\textbf{End Function}
\end{algorithm}

\setcounter{step}{-1}
\begin{step}\label{rmk:SelectionMap}
Consider  the symmetric group of order $K$ denoted by $\bS_K$, which is the group of all possible permutations of the first $K$ positive integers.
Given \emph{any} $K$  pairwise distinct quadratic functions associated to some $P_1, \dots, P_K>0$, for any $\rho=(\rho_1, \dots , \rho_K) \in \bS_K$, define the open set
\begin{equation}\label{eq:cone}
E_{\rho} := \left\{ x \in \mathbb{R}^n \; \vert \; x^\top P_{\rho_1}x < \dots <  x^\top P_{\rho_K}x\right\},
\end{equation}
which is a cone (possibly empty) where a strict ordering among the $K$ quadratic functions holds.
For a given max-min combination in~\eqref{Mm}, namely given  $J\geq 1$ and nonempty sets $S_j \subset \{1, \dots, K\}$, $\forall j \in \{1, \dots, J \}$, in each $E_\rho$ the function $\alpha_V:\R^n\rightrightarrows\{1,\dots,K\}$ defined in~\eqref{eq:alpha} is constant and single valued; let us denote it by $\Phi(\rho):=\alpha_V(E_\rho)\in \{1,\dots ,K\}$.
 \hfill$\triangle$
\end{step}

In Algorithm~\ref{alg:FunctionPhi}, we present how to numerically construct $\Phi:\bS_K\to\{1,\dots,K\}$, independently of matrices $(P_1,\dots,P_K)$.

\begin{oss}
We emphasize that function $\Phi$ is independent of $P_1,\dots, P_K$, but only depends on the max-min policy defined by sets $S_1,\dots S_J$. As an example, considering $J=K$ and $S_j=\{j\}$, the max-min combination~\eqref{Mm} coincides with the maximum of the $K$ quadratic functions. In this case, $\Phi$ will be defined as
$
\Phi((\rho_1,\dots,\rho_K))=\rho_K,$ $\forall \rho=(\rho_1,\dots,\rho_K)\in \bS_K,
$
because of~\eqref{eq:cone}. Also, to relate $\Phi$ with $\alpha_V$, it is seen that for any $K$ base quadratics defined by $(P_1,\dots,P_K)$ with a specific max-min combination determined by $V$, the mapping $\alpha_V$ in~\eqref{eq:alpha} corresponds to 
$
\alpha_V(x)=\bigcap_{\varepsilon>0}\{\Phi(\rho)\,\vert\,E_\rho\cap \B(x,\varepsilon)\neq \emptyset\}.
$
\end{oss}
Next, in Step~\ref{rmk:S-PROC}, we use the function $\Phi$ to check condition \emph{(i)} of Corollary~\ref{cor:linstab}:
\begin{step}[Conditions on $E_\rho$]\label{rmk:S-PROC}
Consider system~\eqref{eq:linear}, and take $K\in \N$, $J\geq 1$  and $S_1,\dots S_J\subset\{1,\dots K\}$ nonempty sets. 
Find $P_1,\dots,P_K>0$, $\beta_i(\rho)\geq0$, $\tau_{i,k}(\rho)\geq0$,\;$\forall\,\rho=(\rho_1,\dots,\rho_K)\in \bS_K$, $\forall\, k\in \{1,\dots ,K-1\}$,  and $\forall i\in \{1,\dots ,M\}$,  such that
\begin{equation}\label{eq:BMiCondition}
\begin{aligned}
A_i^\top P_{\Phi(\rho)}\hskip-0.05cm+P_{\Phi(\rho)}A_i+\hskip-0.05cm \sum_{k=1}^{K-1}\tau_{i,k}(\rho)(P_{\rho_{k+1}}-P_{\rho_k})+\beta_i(\rho) Q_i<0.
\end{aligned}
\end{equation}
\end{step}
In Proposition~\ref{prop:AlgoStep1} below, we prove that the feasibility of Step~\ref{rmk:S-PROC} yields $K$ matrices such that condition \emph{(i)} of Corollary~\ref{cor:linstab} holds, while in Algorithm~\ref{alg:LyapunovAlg} we formalize this step of computationally checking  condition~\eqref{eq:BMiCondition}.

\begin{algorithm}[!t]\caption{Lyapunov conditions: Differentiable case.}\label{alg:LyapunovAlg}
\small
\KwData{$A_1,\dots, A_M\in \R^{n\times n}$, $Q_1,\dots,Q_M\in \Sym(\R^{n})$.}
\textsc{Initialization: }Choose the max-min structure:\\
{Take $K\in \N$, $J\geq 1$, $S_1,\dots S_J\subset\{1,\dots K\}$, construct $\Phi:\bS_K\to \{1,\dots,K\}$ (Algorithm~\ref{alg:FunctionPhi}).
}\\
\textsc{Lyapunov conditions on  $E_\rho$, $\forall \rho\in \bS_K$:}\\
{ (Step~\ref{rmk:S-PROC}){ Check the feasibility of
{\footnotesize
\begin{equation}\label{eq:Feasability}
\begin{aligned}
\hskip-0.4cm &A_i^\top P_{\Phi(\rho)}+P_{\Phi(\rho)}A_i+\sum_{k=1}^{K-1}\tau_{i,k}(\rho)(P_{\rho_{k+1}}-P_{\rho_k}) +\beta_i(\rho)Q_i<0,\\
&P_1,\dots P_K>0,
  \beta_i(\rho),\tau_{i,k}(\rho)\geq0,\;\forall\, \rho=(\rho_1,\dots,\rho_K)\in \bS_K,\\ 
&k\in \{1,\dots, K-1\}, i\in \{1,\dots, M\}.
\end{aligned}
\end{equation}} }\\\eIf{ \eqref{eq:Feasability} are feasible}{\KwOut{Matrices $(P_1,\dots, P_K)$} }{\KwOut{$\emptyset$}}}
\end{algorithm}

\begin{prop}\label{prop:AlgoStep1}
Consider $K\in \N$, $J\geq 1$, $S_1,\dots,S_J\subset\{1,\dots K\}$ non-empty, $(P_1,\dots, P_K)$ positive definite matrices and $V$ defined as in~\eqref{Mm}. If, for any $\rho=(\rho_1,\dots,\rho_K)\in \bS_K$, any $i\in \{1,\dots, M\}$ and any $k=\{1,\dots,K-1\}$, there exist $\beta_i(\rho)\geq0$, $\tau_{i,k}(\rho)\geq0$ such that \eqref{eq:BMiCondition} holds, then item~\emph{(i)} of Corollary~\ref{cor:linstab} holds.
\end{prop}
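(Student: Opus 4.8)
The plan is to show that every point $x$ at which $\alpha_V(x)=\{\ell\}$ and $I(x)=\{i\}$ are both singletons is approached by some cone $E_\rho$ (in the notation of Step~\ref{rmk:SelectionMap}) with $\Phi(\rho)=\ell$, and then to read off the desired sign condition on $A_i^\top P_\ell+P_\ell A_i$ by evaluating the matrix inequality~\eqref{eq:BMiCondition} associated with that $\rho$ and $i$ at the vector $x$.

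So fix such an $x$; the claimed inequality is trivial for $x=0$, hence assume $x\neq 0$. Since $I(x)=\{i\}$ we have $x\in\overline{D_i}=\{y\,\vert\,y^\top Q_iy\geq0\}$, so $x^\top Q_ix\geq0$. I would next note that $\bigcup_{\rho\in\bS_K}E_\rho$ is open and dense: its complement is contained in $\bigcup_{a\neq b}\{y\,\vert\,y^\top(P_a-P_b)y=0\}$, a finite union of zero sets of nonzero quadratic forms (the forms are nonzero because the $P_\ell$ are pairwise distinct), hence of empty interior. Therefore some sequence $x_n\to x$ has $x_n\in E_{\rho^{(n)}}$, and since $\bS_K$ is finite we may pass to a subsequence along which $\rho^{(n)}\equiv\rho$, so $x_n\to x$ with $x_n\in E_\rho$ for all $n$. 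By Step~\ref{rmk:SelectionMap}, $\alpha_V\equiv\{\Phi(\rho)\}$ on $E_\rho$, so by~\eqref{eq:alfainclusion} $V\equiv V_{\Phi(\rho)}$ on the open set $E_\rho$, whence $E_\rho\subset\inn(C_{\Phi(\rho)})$ and thus $x\in\cl(\inn(C_{\Phi(\rho)}))$, i.e.\ $\Phi(\rho)\in\alpha_V(x)=\{\ell\}$ and $\Phi(\rho)=\ell$. Moreover, the strict ordering $x_n^\top P_{\rho_1}x_n<\dots<x_n^\top P_{\rho_K}x_n$ valid on $E_\rho$ passes to the limit as $x^\top P_{\rho_1}x\leq\dots\leq x^\top P_{\rho_K}x$, so $x^\top(P_{\rho_{k+1}}-P_{\rho_k})x\geq0$ for each $k\in\{1,\dots,K-1\}$.

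Finally I would invoke~\eqref{eq:BMiCondition} for this $\rho$ and $i$. Writing $M_{i,\rho}$ for the (negative definite) matrix on its left-hand side and using $\Phi(\rho)=\ell$, we evaluate the quadratic form at $x$; since $\tau_{i,k}(\rho)x^\top(P_{\rho_{k+1}}-P_{\rho_k})x\geq0$ and $\beta_i(\rho)x^\top Q_ix\geq0$,
\[
x^\top\!\left(A_i^\top P_\ell+P_\ell A_i\right)\!x\;\leq\;x^\top M_{i,\rho}\,x\;\leq\;-\lambda_{\min}\!\left(-M_{i,\rho}\right)|x|^2 .
\]
Taking $\varepsilon:=\min\{\lambda_{\min}(-M_{i,\rho})\,\vert\,i\in\{1,\dots,M\},\ \rho\in\bS_K\}$, which is a minimum of finitely many strictly positive numbers and hence positive, gives $x^\top(A_i^\top P_\ell+P_\ell A_i)x\leq-\varepsilon|x|^2$ for every admissible $x$, i.e.\ item~\emph{(i)} of Corollary~\ref{cor:linstab}. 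The genuinely delicate point is the accumulation argument of the second paragraph — exhibiting a cone $E_\rho$ adjacent to $x$ that both realizes the essentially-active index $\ell$ (so $\Phi(\rho)=\ell$) and inherits the ordering of the $K$ base quadratics at $x$; the concluding S-procedure-type estimate is routine.
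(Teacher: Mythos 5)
Your proof is correct, and its core step is the same S-procedure evaluation of \eqref{eq:BMiCondition} that the paper uses; the difference is in how the two arguments cover the relevant points. The paper establishes the strict bound on each open cone $D_i\cap E_\rho$ (where $x^\top Q_ix>0$ and $x^\top(P_{\rho_{k+1}}-P_{\rho_k})x>0$ hold strictly) and then asserts the conclusion for every $x$ with $\alpha_V(x)$ and $I(x)$ singletons; this leaves implicit the fact that such an $x$ need not itself lie in any $E_\rho$ (it may sit on a surface $\{y^\top(P_a-P_b)y=0\}$ for a pair $a,b$ that does not destroy single-valuedness of $\alpha_V$). Your density-plus-subsequence argument addresses exactly this: you show every such $x$ is adjacent to a cone $E_\rho$ with $\Phi(\rho)=\ell$, pass the strict ordering to the non-strict limit, and then evaluate the matrix inequality at $x$ directly using only the non-strict signs $x^\top Q_ix\geq 0$ and $x^\top(P_{\rho_{k+1}}-P_{\rho_k})x\geq0$, which suffices because the uniform gap comes from $\lambda_{\min}(-M_{i,\rho})>0$ rather than from strictness of the constraints. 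The identification $\Phi(\rho)=\ell$ via $E_\rho\subset\inn(C_{\Phi(\rho)})$ and \eqref{eq:alfainclusion} is sound, and the finite minimum defining $\varepsilon$ is positive. In short: same mechanism, but your version makes rigorous the passage from the open cones to their boundaries, which the paper's proof handles only implicitly by continuity.
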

\begin{proof}
The set $E_\rho\cap D_i$ can be written as
\[
E_\rho\cap D_i=\left\{
x\in \R^n\bigg\vert\;\begin{aligned} &x^\top Q_ix>0\wedge \,x^\top(P_{\rho_{k+1}}-P_{\rho_k})x>0,\\ &\forall \;k\in \{1,\dots,K-1\}   \;
\end{aligned}\right\}.
\]
If~\eqref{eq:BMiCondition} holds, due to the strict inequality, there exists $\varepsilon_{i,\rho}>0$ such that
\begin{equation}\label{eq:IntermediateStepBMi}
x^\top (A_i^\top P_{\Phi(\rho)}+P_{\Phi(\rho)}A_i)x\leq -\varepsilon_{i,\rho}|x|^2,\;\;\forall\,x\in D_i\cap E_\rho.
\end{equation}
 By Step~\ref{rmk:SelectionMap} we have $\alpha_V(x)=\{\Phi(\rho)\}$ and, by~\eqref{eq:SwitchingCones}, $I(x)=\{i\}$ for all $D_i\cap E_\rho$, and thus~\eqref{eq:IntermediateStepBMi} implies that~\eqref{eq:FirstCondLinCor} holds 
for all $x\in D_i\cap E_\rho$. Defining  $\varepsilon:=\min_{i,\rho}\varepsilon_{i,\rho}$ we have that~\eqref{eq:FirstCondLinCor} holds for each $x\in \R^n$  with $\alpha_V(x)$ and $I(x)$ being singletons, thus concluding the proof.
\end{proof}
\begin{oss}[Polyhedral cones]
Consider again the alternative state-space partition discussed in Remark~\ref{rmk:StatePartition}. More precisely, consider polyhedral cones $D_1,\dots, D_M\subset \R^n$ defined by $D_i:=\{x\in \R^n\;\vert\; K_ix \geq_c \mathbf{0}\}$, 
where, for each $i\in \{1,\dots, M\}$ $K_i\in \R^{k_i\times n}$, for some $k_i\in \N$. Equivalently, the sets $D_i$ can be represented by $D_i=\text{cone}(v_i)_{i=1}^{M_i}$, where $v_1,\dots,v_{M_i}\in \R^n$ are the rays of the cone $D_i$. Let us call by $R_i\in \R^{n\times M_i}$ the matrix whose columns are the vectors $v_i$. As presented in~\cite[Lemma~1]{IerTan17} we have
that, given any symmetric matrix $S\in \R^{n\times n}$, if there exists a symmetric and entry-wise positive matrix $N_i\in \R^{n \times n}$ such that $R_i^\top S R_i+N_i\leq 0$ 
then $x^\top S x<0,\;\forall x\in D_i$.
Using this result, the procedure presented in Step~\ref{rmk:S-PROC} and Proposition~\ref{prop:AlgoStep1} can be adapted to the polyhedral cones case by requiring that for any $\rho=(\rho_1,\dots,\rho_K)\in \bS_K$, any $i\in \{1,\dots, M\}$ and any $k=\{1,\dots,K-1\}$, there exist $\tau_{i,k}(\rho)\geq0$ and a symmetric entry-wise positive matrix $N_i(\rho)$  such that 
\[
R_i^\top S(\rho) R_i+N_i(\rho)\leq 0, 
\]
with $S(\rho):=A_i^\top P_{\Phi(\rho)}\hskip-0.05cm+P_{\Phi(\rho)}A_i+ \sum_{k=1}^{K-1}\tau_{i,k}(\rho)(P_{\rho_{k+1}}-P_{\rho_k})$. 
\end{oss}
\begin{oss}[Computational burden]\label{rmk:ReducingINeq}
It is noted that, in general, since $|\bS_K|= K!$, Algorithm~\ref{alg:LyapunovAlg} requires studying the feasibility of $M\cdot K!$ inequalities, which involve $M K K!$ non-negative scalars and $K$ symmetric positive-definite matrices. It is clear that the computational burden grows quickly as a function of the number $K$ of the chosen base-quadratics. However, fixing $J\geq 1$, $S_1,\dots,S_J\subset\{1,\dots, K\}$ in \eqref{Mm} (thus fixing a particular max-min structure) the computational burden can be reduced. In \cite[]{dellarossa18} we showed how the number of required inequalities depends on the choice of sets $S_j$ in the case of three quadratics, i.e. $K=3$.
\end{oss}

\begin{example}[\ref{shieldexample}{ - Continued: Item~\emph{(i)}}]
We have already proved that there does not exist a convex Lyapunov function for system (\ref{eq:exShieldSys}). 
We will construct a max-min of quadratics Lyapunov function $V$ of the form~\eqref{eq:exShieldLyap}.
In other words, we have fixed  $K=3$, $J=2$, $S_1=\{1,2\}$ and $S_2=\{3\}$. Using Algorithm~\ref{alg:FunctionPhi} we construct the function $\Phi$ that reads
$
\Phi(\rho_1)=\Phi(\rho_2)=\Phi(\rho_3)=\Phi(\rho_4) = 3, 
$ 
 where $\rho_1=(1,2,3)$, $\rho_2=(1,3,2)$, $\rho_3=(2,1,3)$, $\rho_4=(2,3,1)$; and
$
\Phi(\rho_5)=1,
$ 
 where $\rho_5=(3,1,2)$; and
$
\Phi(\rho_6)=2
$, where $\rho_6=(3,2,1)$.
In these cases, the matrix inequalities of Algorithm~\ref{alg:LyapunovAlg} (after the reductions outlined in Remark~\ref{rmk:ReducingINeq}) read
\begin{align*}
&A_2^\top P_2+P_2 A_2+\tau_1 (P_2-P_3)+\tau_2 (P_1-P_2)+\beta_1Q_2<0,\\
&A_1^\top P_1+P_1 A_1+\tau_3 (P_1-P_3)+\tau_4 (P_2-P_1)+\beta_2Q_1<0,\\
&A_3^\top P_3+P_3 A_3+\tau_5(P_3-P_1)+\beta_3Q_3<0,\\
&A_3^\top P_3+P_3 A_3+\tau_6 (P_3-P_2)+\tau_7(P_1-P_3)+\beta_4 Q_4<0,\\
&\tau_k\geq0,\,\forall k\in \{1,\dots, 7\},\, \beta_i\geq0,\;\forall i\in \{1,\dots, 4\},\; P_1,P_2,P_3>0.
\end{align*}
Using numerical solvers, it follows that these inequalities are feasible, and in particular they are satisfied by
\begin{equation}\label{eq:ShieldExampleMatricesP}
P_1= \begin{bmatrix}
    5 & 0 \\
    0 & 1
  \end{bmatrix}, \quad
  P_2= \begin{bmatrix}
    1 & 0 \\
    0 & 5
  \end{bmatrix}, \quad
  P_3= \begin{bmatrix}
   3 & 2 \\
    2 & 3
    \end{bmatrix},
\end{equation}
$\tau=(0.258,0.102,0.258, 0.102, 0.284, 0.193, 0.090)$ and $\beta_i=0$, $\forall\,i\in \{1,\dots, 4\}$. A level set of $V$ is plotted in Fig.~\ref{flower2plot}. This proves that $V$ in~\eqref{eq:exShieldLyap} with $P_i$ as in~\eqref{eq:ShieldExampleMatricesP} satisfies item~\emph{(i)} of Corollary~\ref{cor:linstab}.
\end{example}
\subsection{Checking item (ii) of Corollary~\ref{cor:linstab} in $\R^2$.}
To study GAS of system~\eqref{eq:linear}, we also need to check item \emph{(ii)} of Corollary~\ref{cor:linstab}, which is computationally harder than item \emph{(i)}. We now discuss how this condition simplifies in the planar case, that is when $n=2$.
To do so, let us analyze the geometry of the switching rule proposed in~\eqref{eq:SwitchingCones}. To non-trivially satisfy Assumption~\ref{ass:swSig}, we will suppose that the matrices $Q_1,\dots, Q_M\in \Sym(\R^2)$ are sign indefinite. We will characterize the sets $D_i$ in~\eqref{eq:SwitchingCones} using the following result.
\begin{lemma}\label{lemma:PlanarDecomposition}
 Given any sign indefinite matrix $Q\in \Sym(\R^2)$, there exist $\theta_1,\theta_2\in \R^2\setminus\{0\}$, $\theta_2\notin \mathrm{span}(\theta_1)$ such that 
 \begin{equation}\label{eq:PlanaDecomposition}
Q=\theta_1\theta_2^\top+\theta_2\theta_1^\top.
 \end{equation}
\end{lemma}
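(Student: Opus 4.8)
The plan is to reduce $Q$ to a difference of two rank-one symmetric matrices by the spectral theorem, and then to rewrite such a difference in the required symmetrized outer-product form by means of an elementary algebraic identity.

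First I would invoke the spectral theorem for the symmetric matrix $Q\in\Sym(\R^2)$: there exist orthonormal eigenvectors $u_1,u_2\in\R^2$ and real eigenvalues $\lambda_1\geq\lambda_2$ with $Q=\lambda_1 u_1u_1^\top+\lambda_2 u_2u_2^\top$. The assumption that $Q$ is sign indefinite (neither positive nor negative semidefinite) forces $\lambda_1>0>\lambda_2$; equivalently $\det Q<0$, so that $Q$ has rank $2$ with one strictly positive and one strictly negative eigenvalue. This is the only place where the hypothesis is used, and it is the point I would state carefully, since it removes the need for any separate degenerate case. Setting $a:=\sqrt{\lambda_1}\,u_1$ and $b:=\sqrt{-\lambda_2}\,u_2$, which are nonzero and mutually orthogonal, one gets $Q=aa^\top-bb^\top$.

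Next I would record the one-line expansion $(a+b)(a-b)^\top+(a-b)(a+b)^\top=2\,aa^\top-2\,bb^\top$ (the cross terms $\pm\,ab^\top$ and $\mp\,ba^\top$ cancel). Hence, defining $\theta_1:=\tfrac{1}{\sqrt2}(a+b)$ and $\theta_2:=\tfrac{1}{\sqrt2}(a-b)$, we obtain $\theta_1\theta_2^\top+\theta_2\theta_1^\top=aa^\top-bb^\top=Q$, which is exactly the claimed decomposition~\eqref{eq:PlanaDecomposition}.

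Finally I would verify the nondegeneracy requirements $\theta_1,\theta_2\neq0$ and $\theta_2\notin\mathrm{span}(\theta_1)$. Since $a$ and $b$ are nonzero and orthogonal, they are linearly independent, so $a+b\neq0$ and $a-b\neq0$; and a relation $\theta_2=\mu\theta_1$, i.e. $a-b=\mu(a+b)$, would give $(1-\mu)a=(1+\mu)b$, which by linear independence of $a,b$ forces $1-\mu=0$ and $1+\mu=0$ simultaneously, a contradiction. Thus $\theta_1,\theta_2$ are linearly independent, completing the proof. There is no serious obstacle here; the only subtlety worth flagging is the equivalence, for a $2\times2$ symmetric matrix, between sign indefiniteness and having eigenvalues of strictly opposite sign, which is what makes the square roots $\sqrt{\lambda_1}$ and $\sqrt{-\lambda_2}$ well defined and nonzero.
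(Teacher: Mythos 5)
Your proof is correct and follows essentially the same route as the paper's: both diagonalize $Q$ by the spectral theorem, write it as $aa^\top - bb^\top$ with $a,b$ proportional to the eigenvectors scaled by $\sqrt{\lambda_+}$ and $\sqrt{-\lambda_-}$, and take $\theta_1,\theta_2$ proportional to $a\pm b$. Your write-up is in fact slightly more complete, since you also verify explicitly that $\theta_1,\theta_2$ are nonzero and linearly independent, which the paper's sketch leaves implicit.
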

\begin{proof}[Sketch of the proof]
 Let us denote by $\lambda_-<0<\lambda_+$ the eigenvalues of $Q$, and with $v_-$,$v_+\in \R^2$ the corresponding unit eigenvectors  ($|v_-|=|v_+|=1$). By the spectral decomposition we  have that
 $
Q=\lambda_+ v_+v_+^\top+\lambda_- v_-v_-^\top.
 $
 Let us call $\eta=\sqrt{\frac{-\lambda_-}{\lambda_+-\lambda_-}}>0$ and $\kappa=\sqrt{\frac{2}{\lambda_+-\lambda_-}}>0$, then by choosing
 $
\theta_1=\kappa\left[\sqrt{1-\eta^2}\;v_+-\eta\, v_-\right ]$ and 
$\theta_2=\kappa\left[\sqrt{1-\eta^2}\;v_++\eta\, v_-\right ]$,
 it is seen that~\eqref{eq:PlanaDecomposition} holds.
\end{proof}
Lemma~\ref{lemma:PlanarDecomposition} allows checking algorithmically condition~\emph{(ii)} of Corollary~\ref{cor:linstab} in the planar case. This is done in two steps.
\renewcommand{\substyle}[1]{\alph{#1}}
\sublabon{step}
\begin{step}\label{step:ParametrizingQi}
Given $M$ sign indefinite matrices $Q_1,\dots, Q_M\in \Sym(\R^2)$ that satisfy Assumption~\ref{ass:swSig}, the non-overlapping and covering conditions in~Assumption~\ref{ass:swSig} imply that matrices $Q_i$, $i=1,..., M$, decomposed as in~\eqref{eq:PlanaDecomposition}, can be suitably ordered\footnote{For the ordering of matrices $Q_i$, via vectors $\theta_i$ in~\eqref{eq:PlanaDecomposition}, we can associate an angle with each one of the lines $\theta_i$, $i = 1,\dots,M$, using the \emph{atan2} function.}
in such a way that 
\begin{equation}\label{eq:LinesVsPartition}
\begin{aligned}
Q_i&=\theta_i\theta_{i+1}^\top+\theta_{i+1}\theta_i^\top\;\; \text{for } i=1,\dots, M-1,\\
Q_M&=\theta_M(-\theta_{1})^\top+(-\theta_{1})\theta_M^\top,
\end{aligned}
\end{equation}
for some suitable selections of linear independent vectors $\theta_1,\dots,\theta_M\in \R^2\cap\{(x_1,x_2)\in \R^2\,\vert\, x_1\geq0\}$.\\
For each $i\in \{1,\dots, M\}$, take $v_i\in \R^2$ as an unit vector generating the subspace $\theta_i^\perp:=\{x\in \R^2\;\vert\; \theta_i^\top x=0\}$.\hfill$\triangle$
\end{step}
\begin{step}\label{step:CheckingOnLines}
Consider $V\in \Mmq(P_1 , \dots, P_K )$ satisfying condition~(i) of Corollary~\ref{cor:linstab}. For every $v_i$ such that $\alpha_V(v_i)=\{\ell^i_1,\ell^i_2\}$ is multivalued, solve the system
\begin{equation}\label{eq:LinearSystemPlanar}
\begin{cases}
0\leq\lambda\leq 1,\\
 \lambda v_i^\top (P_{\ell^i_2}-P_{\ell^i_1})A_{i-1} v_i+(1-\lambda)v_i^\top (P_{\ell^i_2}-P_{\ell^i_1})A_{i} v_i= 0,\\
\end{cases}
\end{equation}
(with $i-1=M$ if $i=1$), and denote by $\Lambda^i\subset[0,1]$ the set of solutions of~\eqref{eq:LinearSystemPlanar} for $v_i$ (possibly empty).\hfill$\triangle$
\end{step}

\sublaboff{step}
In the following we formally prove the effectiveness of Steps~\ref{step:ParametrizingQi} and~\ref{step:CheckingOnLines}.

\begin{prop}\label{prop:PlanarStability}
Consider $A_1,\dots ,A_M\in \R^{2\times2}$ and $M$ indefinite matrices $Q_1,\dots Q_M\in \Sym(\R^2)$ that satisfy Assumption~\ref{ass:swSig}, and are parameterized as in \eqref{eq:LinesVsPartition}. Suppose that there exist $P_1,\dots, P_K > 0$ such that $V \in \Mmq(P_1, \dots, P_K)$ satisfies condition~{(i)} of Corollary~\ref{cor:linstab}.  If, for all $i\in \{1,\dots,M\}$ such that $\alpha_V(v_i)$ is multivalued, we have
\begin{multline}\label{eq:PlanarSecondCondition}
\lambda v_i^\top (P_{\ell^i_1} A_{i-1} + A_{i-1}^\top P_{\ell^i_1})v_i +(1-\lambda) v_i^\top (P_{\ell^i_1} A_i + A_i^\top P_{\ell^i_1})v_i<0,
\end{multline}
for all $\lambda\in \Lambda^i$, then item (ii) of Corollary~\ref{cor:linstab} holds.
\end{prop}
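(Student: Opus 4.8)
The goal is to show that condition (ii) of Corollary~\ref{cor:linstab} holds, i.e. for every $x\in\R^2$ with $\alpha_V(x)=\{\ell_1,\dots,\ell_p\}$ ($p>1$) and $I(x)=\{i_1,\dots,i_m\}$ ($m>1$), there is some $\ell\in\alpha_V(x)$ with $\sum_{i\in I(x)}\lambda_i x^\top(P_\ell A_i+A_i^\top P_\ell)x\le-\varepsilon|x|^2$ for all $\lambda\in\Lambda(x,\{A_i\}_{i\in I(x)})$. The key simplification in the planar case is that the set where $V$ fails to be differentiable is a finite union of lines through the origin, and the set where the switching map $I$ is multivalued is exactly the union of the lines $\theta_i^\perp=\mathrm{span}(v_i)$ coming from the decomposition~\eqref{eq:LinesVsPartition}. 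First I would observe that at any $x$ where \emph{both} $\alpha_V$ and $I$ are multivalued, $x$ must lie on one of the lines $\mathrm{span}(v_i)$, so $x=\mu v_i$ for some $\mu\neq0$ and some $i$; moreover on $\mathrm{span}(v_i)$ the switching set is $I(x)=\{i-1,i\}$ (with the cyclic convention $i-1=M$ when $i=1$), since $v_i$ lies on the common boundary of $D_{i-1}$ and $D_i$ and on no other $\overline{D_j}$. So $m=2$ and the two active vector fields are $A_{i-1}x$ and $A_i x$.

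Next I would exploit homogeneity (Remark~\ref{rmk:Homogeneity}): $\alpha_V(\mu v_i)=\alpha_V(v_i)$ and both the constraint set~\eqref{eq:bigsystem1} and the quadratic forms in~\eqref{eq:SecondCondLinCor} scale by $\mu^2$, so it suffices to verify the condition at the unit vectors $v_i$ themselves — reducing the continuum of points to the finite list $v_1,\dots,v_M$ singled out in Step~\ref{step:ParametrizingQi}. For a fixed such $v_i$ with $\alpha_V(v_i)=\{\ell^i_1,\ell^i_2\}$, I would unwind the definition of $\Lambda(v_i,\{A_{i-1},A_i\})$ from~\eqref{eq:bigsystem1}: writing $\lambda=(\lambda,1-\lambda)$, the single defining equation is precisely $\lambda v_i^\top(P_{\ell^i_2}-P_{\ell^i_1})A_{i-1}v_i+(1-\lambda)v_i^\top(P_{\ell^i_2}-P_{\ell^i_1})A_i v_i=0$ together with $\lambda\in[0,1]$, which is exactly system~\eqref{eq:LinearSystemPlanar} defining $\Lambda^i$. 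Hence $\Lambda(v_i,\{A_{i-1},A_i\})$, identified with its first coordinate, equals $\Lambda^i$. By~\eqref{eq:doubleimplication}, picking $\ell=\ell^i_1\in\alpha_V(v_i)$, the set $\dot{\overline V}_{F^\sw_{\text{lin}}}(v_i)$ equals $\{v_i^\top(P_{\ell^i_1}A(\lambda)+A(\lambda)^\top P_{\ell^i_1})v_i:\lambda\in\Lambda^i\}$ where $A(\lambda)=\lambda A_{i-1}+(1-\lambda)A_i$; and the hypothesis~\eqref{eq:PlanarSecondCondition} says exactly that every element of this set is strictly negative. Since $\Lambda^i$ is compact (it is a closed subset of $[0,1]$ cut out by a linear equation) and the map $\lambda\mapsto v_i^\top(P_{\ell^i_1}A(\lambda)+A(\lambda)^\top P_{\ell^i_1})v_i$ is continuous, there is $\varepsilon_i>0$ with the left-hand side of~\eqref{eq:PlanarSecondCondition} $\le-\varepsilon_i=-\varepsilon_i|v_i|^2$ for all $\lambda\in\Lambda^i$; by homogeneity the same holds at every $x\in\mathrm{span}(v_i)\setminus\{0\}$ with $\varepsilon_i|x|^2$ on the right. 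Taking $\varepsilon=\min_i\varepsilon_i$ over the finitely many indices $i$ for which $\alpha_V(v_i)$ is multivalued (and combining with the $\varepsilon$ already available from condition (i) for the other cases), condition (ii) of Corollary~\ref{cor:linstab} follows.

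The main obstacle, and the point needing the most care, is the geometric claim that the only points where both $\alpha_V$ and $I$ are multivalued are the rays $\mathrm{span}(v_i)$, together with the precise identification $I(x)=\{i-1,i\}$ there. This uses the ordering of the decompositions in~\eqref{eq:LinesVsPartition}: the non-overlap and covering conditions in Assumption~\ref{ass:swSig} force the lines $\theta_1^\perp,\dots,\theta_M^\perp$ (equivalently the boundaries $\{x^\top Q_ix=0\}$) to interleave so that consecutive cones $D_{i-1},D_i$ share exactly the ray through $v_i$; one must rule out a point lying on $\overline{D_j}$ for a non-consecutive $j$, which follows because a line through the origin meets $\{x^\top Q_jx=0\}$ (itself two lines through the origin, by sign-indefiniteness) only in the two boundary rays of $D_j$. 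A secondary routine point is checking that $\Lambda^i$ as defined by~\eqref{eq:LinearSystemPlanar} genuinely coincides with the abstract $\Lambda(v_i,\cdot)$ of~\eqref{eq:bigsystem1} in the case $p=2$, which is immediate since for $p=2$ system~\eqref{eq:bigsystem1} has a single equation.
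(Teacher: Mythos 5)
Your proposal is correct and follows essentially the same route as the paper's proof: reduce to the unit vectors $v_i$ by homogeneity, identify $\Lambda^i$ with $\Lambda(v_i,\{A_{i-1},A_i\})$ via~\eqref{eq:bigsystem1} in the case $p=2$, and conclude negativity of $\dot{\overline{V}}_{F^\sw_{\text{lin}}}(v_i)$ from~\eqref{eq:PlanarSecondCondition}. Your explicit compactness argument for extracting a uniform $\varepsilon$ is a welcome elaboration of the paper's brief ``selecting a small enough $\varepsilon>0$,'' but it is not a different approach.
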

\begin{proof}
Recalling~\eqref{eq:SwitchingCones}, the parametrization in~\eqref{eq:LinesVsPartition}  characterizes the points $x$ where the map $I(x)$ is multivalued. From~\eqref{eq:LinesVsPartition}, we have that $\overline{D}_i\cap\overline{D}_{i+1}=\theta_{i+1}^\perp$, for all $i=1,\dots,M-1$ and $\overline{D}_M\cap\overline{D}_{1}=\theta_{1}^\perp$.
Thus
\begin{equation}\label{eq:PlanarSystemI}
I(x)=\begin{cases}\{i,i+1\},\;\;&\text{if } x\in \theta_{i+1}^\perp, \;i=1,\dots, M-1,\\
\{1,M\},\;\;&\text{if } x\in \theta_{1}^\perp,\\
\{i\}, \;\;&\text{if } x^\top Q_i x > 0.
\end{cases}
\end{equation}
Let us now consider a function $V\in \Mmq(P_1, \dots, P_K)$ that satisfies condition~\emph{(i)} of Corollary~\ref{cor:linstab}.
From Remark~\ref{rmk:Homogeneity}, for any max-min function $V \in \Mmq(P_1, \dots, P_K)$, the value of the map $\alpha_V:\R^2\rightrightarrows\{1,\dots,K\}$ has \emph{at most} $2$ elements.
To check item~\emph{(ii)} of Corollary~\ref{cor:linstab} we must consider all the points $x\in \R^2$ such that $I(x)$ and $\alpha_V(x)$ are multivalued. As shown in~\eqref{eq:PlanarSystemI}, the set of points where the map $I$ is multivalued coincides with the union of the $M$ lines $\theta^\perp_1,\dots, \theta^\perp_M$.
From Remark~\ref{rmk:Homogeneity}, the homogeneity of $F^\sw_{\text{lin}}$ and $V\in\Mmq(P_1, \dots, P_K)$ implies that it is sufficient to check condition \emph{(ii)} of Corollary~\ref{cor:linstab} only for the chosen unit vectors $v_1,\dots v_M$ which span $\theta^\perp_1,\dots \theta^\perp_M$ respectively. We can conclude noting that, for each $i\in \{1,\dots,M\}$ such that $\alpha_V(v_i)$ is multivalued, system~\eqref{eq:LinearSystemPlanar} corresponds to~\eqref{eq:bigsystem1}, and equation~\eqref{eq:SecondCondLinCor} follows from~\eqref{eq:PlanarSecondCondition} selecting a small enough $\varepsilon>0$.
\end{proof}

Proposition~\ref{prop:PlanarStability} shows that for a \emph{planar} linear switched system~\eqref{eq:linear},~\eqref{eq:SwitchingCones} involving $M$ subsystems, it is sufficient to identify unit vectors $v_i$, $i = 1, \dots, M$ generating the switching lines, and verify inequality~\eqref{eq:SecondCondLinCor} for these $M$ points. Item \emph{(ii)} of Corollary~\ref{cor:linstab} then follows from homogeneity.
This result allows concluding the analysis of Example~\ref{shieldexample}.
\begin{example}[\ref{shieldexample} {- Continued: Item~\emph{(ii)}}]
As a last step to show that the origin of~\eqref{eq:exShieldSys},~\eqref{eq:signal} is GAS, we have to ensure the condition \emph{(ii)} of Corollary~\ref{cor:linstab}. Since the signal~\eqref{eq:signal} can be rewritten in the form~\eqref{eq:LinesVsPartition}, we can follow Steps~\ref{step:ParametrizingQi} and~\ref{step:CheckingOnLines}, taking $v_1\in\cS_{13}$, $v_2\in \cS_{21}$, $v_3\in \cS_{32}$ such that $|v_j|=1$, for all $j\in \{1,2,3\}$. Considering system~\eqref{eq:LinearSystemPlanar}, it is easily checked that
$
\Lambda^j=\emptyset,\;\;\forall\;j\in \{1,2,3\}
$.
Recalling~\eqref{eq:doubleimplication}, $\dot{\overline{V}}_F({v_j})=\emptyset$, for $j=1,2,3$.
 Then by Proposition~\ref{prop:PlanarStability} the function $V$ in \eqref{eq:exShieldLyap} is a Lyapunov function for system \eqref{eq:exShieldSys} which certifies GAS.
\end{example}

\begin{oss}\label{rem:ExClarkFails}
In Example~\ref{shieldexample}, it can be shown that $V$ in \eqref{eq:exShieldLyap} does not satisfy the conditions $\dot V_{F^\sw}(x) < 0$ for some $x \in \R^2$: consider the point $v_1 \in \cS_{13}$, where we have shown  $\dot{\overline{V}}_{F^\sw}(v_1)=\emptyset$.
Since $v_1\in \cS_{13}$, then
$
\partial V(v_1) =\co\{ 2P_1 v_1, 2P_3v_1\}$ and $
F^\sw(z_0) =\co\{A_1 v_1, A_3 v_1\}$.
Straightforward computations yield
$
v_1^\top (P_3 A_1+A_1^\top P_3) v_1=8.65 >0,
$
and thus $\exists w \in \partial V(v_1)$ and $f\in F^\sw(v_1)$ such that
$
0 < w^\top f \in \dot V_{F^\sw} (v_1),
$
which implies that Corollary \ref{mainteo} is not applicable and well illustrate the fact that Corollary~\ref{cor:linstab} provides less conservative conditions.
\end{oss}

\subsection{Checking item~{(ii)} of Corollary~\ref{cor:linstab} in $\R^n$ with 2 modes.}
The main difficulty in checking item~{(ii)} of Corollary~\ref{cor:linstab} in higher dimensions is that the set $\Lambda(x, \{A_i\}_{i \in I(x)})$ of \eqref{eq:bigsystem1} cannot be finitely parameterized.
In this section, we impose a structure on \eqref{eq:linear} which allows us to check this condition without explicitly computing $\Lambda(x, \{A_i\}_{i \in I(x)})$. The idea is to rule out the motion on switching surfaces, in which case negative definiteness of $\dot{\overline{V}}_{F^\sw_{\text{lin}}}$ on the switching surface can be established by continuity arguments.
More precisely, we consider a 2-mode $n$-dimensional switched system, i.e. $M=2$ in~\eqref{eq:linear}, and the sets $D_1,D_2\subset\R^n$ are defined by
\begin{equation}\label{eq:2ModePartition}
\begin{aligned}
D_1&=\{x\in \R^n\;\vert\;x^\top Q_1x=x^\top Qx>0\},\\
D_2&=\{x\in \R^n\;\vert\;x^\top Q_2x=-x^\top Qx>0\},
\end{aligned}
\end{equation}
where $Q\in \Sym(\R^{n})$ is invertible.
In other words, we are considering a partition of $\R^n$ as in Assumption~\ref{ass:swSig}, which comprises two symmetric cones $D_1,D_2\subset \R^n$. We will denote the boundary of these cones (also called the \emph{switching surface}) with $\cQ:=\{x\in \R^n\;\vert\;x^\top Q x=0\}$. 
A computationally attractive way to avoid sliding motion, is to follow a preliminary step, presented in what follows.
 \begin{figure*}[t!]
        \centering
        \includegraphics[height=3.3cm]{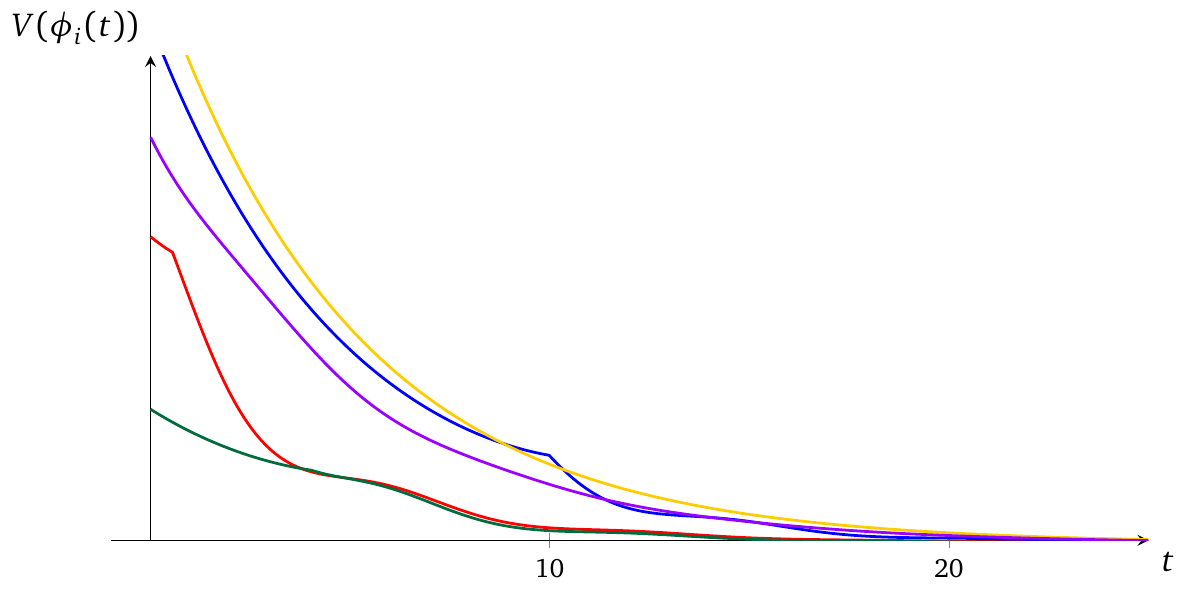}
        \caption{The evolution of the Lyapunov function $V$ along  solutions $\phi_i$, $i=1,\dots,5$. }
    \label{figure:3d}
\end{figure*}
\begin{stepbis}{step:ParametrizingQi}[Ruling out motion on the switching surface]\label{step:RulingSliding}
For every $z\in \R^n$ such that $|z|=1$, check if the implication
\begin{equation}\label{eq:SigmaMatrix}
z^\top Qz=0 \;\Rightarrow\; (z^\top QA_1z)(z^\top QA_2z)>0
\end{equation}
is satisfied.\hfill$\triangle$
\end{stepbis}
Condition~\eqref{eq:SigmaMatrix} intuitively means that, given a unit vector $z\in\cQ$, the vectors $A_1z$ and $A_2z$ are both pointing inside (or outside) the cone $D_1$, and thus it rules out the possibility of having solutions sliding along $\cQ$.
A viable way to check condition \eqref{eq:SigmaMatrix} is to  consider the decomposition of $Q$ as $Q=V\Lambda V^\top$, where invertibility of $Q$ implies that $\Lambda$ is a diagonal matrix with only $1$ and $-1$  diagonal elements and then check the simpler implication 
\[
\oz^\top \Lambda \oz=0\;\Rightarrow (\oz^\top V^{-\top}QA_1V^{-1}\oz) (\oz^\top V^{-\top}QA_2V^{-1}\oz)>0
\]
for all $|\oz|=1$.
To simplify the discussion, consider max-min combination over $K$ quadratics defined by $K$ symmetric and positive definite matrices $P_1, \dots P_K \in \mathbb{R}^{n \times n}$ satisfying:
\begin{equation}\label{eq:NonDegenerate}
\rank(P_{j_1}-P_{j_2})=n,\;\;\forall j_1,j_2\in\{1,\dots, K\},j_1\neq j_2,
\end{equation}
which is not too restrictive since full-rank matrices are dense in $\R^{n\times n}$.
\begin{prop}\label{Prop:StabilityMultiDim}
Consider a 2-mode linear switched system~\eqref{eq:linear} with $D_1,D_2\subset\R^n$ as in~\eqref{eq:2ModePartition} and $Q\in \Sym(\R^{n})$ invertible. Suppose that for all $z\in \R^n$, $|z|=1$, the implication~\eqref{eq:SigmaMatrix} in Step~\ref{step:RulingSliding} holds. If there exist $P_1,\dots,P_K > 0 $ satisfying~\eqref{eq:NonDegenerate}, and $V \in \Mmq(P_1, \dots, P_K)$ satisfying condition~{(i)} of Corollary~\ref{cor:linstab}, then item (ii) holds and system~\eqref{eq:linear} is GAS.
\end{prop}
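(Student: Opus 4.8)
The plan is to verify item~(ii) of Corollary~\ref{cor:linstab}; once this is done, GAS of~\eqref{eq:linear} follows directly from that corollary (its remaining cases, in which $I(x)$ is a singleton, use only condition~(i), which is assumed here). Since $M=2$, the map $I$ is multivalued only on $\overline{D_1}\cap\overline{D_2}=\cQ$, and for $x\in\cQ\setminus\{0\}$ (the only relevant points; if $Q$ is definite then $\cQ=\{0\}$ and item~(ii) is vacuous) invertibility of $Q$ makes $\cQ$ a smooth hypersurface near $x$ with normal $Qx\ne0$, both cones accumulating on it, so $I(x)=\{1,2\}$. Hence it suffices to check \eqref{eq:SecondCondLinCor} at a point $x\in\cQ\setminus\{0\}$ with $\alpha_V(x)=\{\ell_1,\dots,\ell_p\}$, $p>1$.

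First I would exploit Step~\ref{step:RulingSliding}: \eqref{eq:SigmaMatrix} applied to $z=x/|x|$ says $x^\top QA_1x$ and $x^\top QA_2x$ are nonzero with the same sign; without loss of generality take both positive (the opposite case is obtained by exchanging $A_1\leftrightarrow A_2$ and reversing the time directions below). Then $\varphi_1(t):=e^{A_1t}x$ is a Filippov solution of~\eqref{eq:linear} on some $[0,\delta)$, because $\frac{d}{dt}\big(\varphi_1^\top Q\varphi_1\big)\big|_{0}=2x^\top QA_1x>0$ forces $\varphi_1(t)\in D_1$ for $t\in(0,\delta)$; symmetrically, $\varphi_2(t):=e^{A_2t}x$ lies in $D_2$ for $t\in(-\delta,0)$ and, after a time shift, is a Filippov solution ending at $x$. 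Along each $\varphi_i$ the point $\varphi_i(t)$ is off $\cQ$ for small $t\ne0$, so $I(\varphi_i(t))$ is a singleton and, as shown within the proof of Corollary~\ref{cor:linstab}, condition~(i) alone gives $\max\dot{\overline{V}}_{F^\sw_{\text{lin}}}(\varphi_i(t))\le-\varepsilon|\varphi_i(t)|^2$; by Lemma~\ref{lm:hard}, $\frac{d}{dt}V(\varphi_i(t))\le-\varepsilon|\varphi_i(t)|^2$ for almost every such $t$. Integrating and letting $t\to0$ (using that $V$ is locally Lipschitz, so the $o(t)$ in $\varphi_i(t)=x+tA_ix+o(t)$ is irrelevant) yields the one-sided directional-derivative bounds $V'(x;A_1x)\le-\varepsilon|x|^2$ and $V'(x;-A_2x)\ge\varepsilon|x|^2$.

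Next I would identify these directional derivatives in terms of the base matrices, exactly as in the proof of Lemma~\ref{lm:hard}: using Lemma~\ref{lemma:prelimnLemma} to reduce $V$ locally to the essentially-active indices in the representation~\eqref{eq:MinMax} with index sets $S_1,\dots,S_J$, and Claim~\ref{Claim:Exchanging} to interchange the one-sided limits with the finitely many $\min$'s and $\max$'s, one obtains
\begin{equation*}
V'(x;A_1x)=\min_{j:\,S_j\cap\alpha_V(x)\ne\emptyset}\ \max_{\ell\in S_j\cap\alpha_V(x)}2x^\top P_\ell A_1x,\qquad V'(x;-A_2x)=-\max_{j:\,S_j\cap\alpha_V(x)\ne\emptyset}\ \min_{\ell\in S_j\cap\alpha_V(x)}2x^\top P_\ell A_2x.
\end{equation*}
From the first bound there is an index $j_1$ with $S_{j_1}\cap\alpha_V(x)\ne\emptyset$ and $2x^\top P_\ell A_1x\le-\varepsilon|x|^2$ for every $\ell\in S_{j_1}\cap\alpha_V(x)$; from the second bound, for every $j$ there is $\ell\in S_j\cap\alpha_V(x)$ with $2x^\top P_\ell A_2x\le-\varepsilon|x|^2$. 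Taking $j=j_1$ in the latter produces a single index $\ell^\star\in S_{j_1}\cap\alpha_V(x)\subseteq\alpha_V(x)$ with both $2x^\top P_{\ell^\star}A_1x\le-\varepsilon|x|^2$ and $2x^\top P_{\ell^\star}A_2x\le-\varepsilon|x|^2$. Consequently, for every $(\lambda_1,\lambda_2)$ in the probability simplex $\Lambda^2_0$, and in particular for every $\lambda\in\Lambda(x,\{A_1,A_2\})$,
\begin{equation*}
\sum_{i\in I(x)}\lambda_i\,x^\top\!\big(P_{\ell^\star}A_i+A_i^\top P_{\ell^\star}\big)x=\lambda_1\cdot2x^\top P_{\ell^\star}A_1x+\lambda_2\cdot2x^\top P_{\ell^\star}A_2x\le-\varepsilon|x|^2,
\end{equation*}
which is exactly~\eqref{eq:SecondCondLinCor} with the choice $\ell=\ell^\star$. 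Thus item~(ii) of Corollary~\ref{cor:linstab} holds and GAS of~\eqref{eq:linear} follows. (The rank hypothesis~\eqref{eq:NonDegenerate} is used only to ensure that each locus $\{x^\top(P_i-P_j)x=0\}$ is a smooth hypersurface away from the origin, keeping the local geometry near $x$ — and the auxiliary cases of Corollary~\ref{cor:linstab} invoked above — transparent.)

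The main obstacle, and the reason Step~\ref{step:RulingSliding} is indispensable, is precisely this reduction: if a genuine sliding mode along $\cQ$ were possible at $x$, the velocity realized by a solution could be a strict interior convex combination of $A_1x$ and $A_2x$, which no single-region instance of condition~(i) controls, and one would be forced to analyze the set $\Lambda(x,\{A_1,A_2\})$ of~\eqref{eq:bigsystem1} directly — a set that, as noted before the statement, cannot be finitely parameterized in $\R^n$. Condition~\eqref{eq:SigmaMatrix} rules this out by making every Filippov solution through $x\in\cQ$ cross the switching surface transversally, which is exactly what makes the two one-sided limiting computations above both available and sufficient. The only delicate points in filling in the details are the measure-theoretic justification that $\tfrac{d}{dt}V(\varphi_i(t))$ obeys the decrease estimate for a.e. small $t$ (handled by Lemma~\ref{lm:hard} together with the singleton-$I$ part of Corollary~\ref{cor:linstab}) and the bookkeeping of the $\min$/$\max$ representation, both of which are routine given the earlier results.
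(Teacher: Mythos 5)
Your proof is correct, but it follows a genuinely different route from the paper's. The paper argues pointwise at a fixed $x\in\cQ$ with $\alpha_V(x)$ multivalued, splitting into two cases: if some $x^\top(P_{\ell'}-P_{\ell''})$ is a nonzero multiple of $x^\top Q$, then \eqref{eq:SigmaMatrix} forces $\Lambda(x,\{A_1,A_2\})=\emptyset$, hence $\dot{\overline{V}}_{F^\sw_{\text{lin}}}(x)=\emptyset$ and item~(ii) holds vacuously; otherwise it invokes the appendix result (Lemma~\ref{lemma:TechincaLemma}) to produce a sequence $x_k\to x$ lying on $\cQ$ with $\alpha_V(x_k)=\{\oell\}$ a singleton, and continuity of item~(i) along that sequence yields a single index $\oell$ satisfying both $x^\top P_{\oell}A_ix\le-\varepsilon|x|^2$, $i=1,2$. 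You instead use \eqref{eq:SigmaMatrix} dynamically, to manufacture two genuine Filippov solutions through $x$ crossing $\cQ$ transversally, integrate the decrease estimate available off the switching surface, and convert the resulting two one-sided directional-derivative bounds into a common index $\ell^\star$ via the $\min$--$\max$ structure of $V'(x;\cdot)$; this last selection step (pick $j_1$ realizing the outer $\min$ for $A_1$, then the $\ell^\star\in S_{j_1}\cap\alpha_V(x)$ realizing the inner $\min$ for $A_2$) is the combinatorial heart of your argument and replaces the paper's perturbation lemma entirely. A notable payoff is that you never use the rank hypothesis~\eqref{eq:NonDegenerate} — it is exactly what the paper's Lemma~\ref{lemma:TechincaLemma} needs to find a direction tangent to $\cQ$ but transversal to every surface $\{y^\top(P_{\ell'}-P_{\ell''})y=0\}$ — so your argument in fact establishes the proposition without that assumption; your closing parenthetical mischaracterizes its role, but since it is a hypothesis you are free not to use, this is not an error. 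The paper's approach, for its part, stays within the pointwise framework of \eqref{eq:bigsystem1}--\eqref{eq:doubleimplication} used throughout Section~\ref{sec:linear} and makes the geometric emptiness mechanism of Remark~\ref{oss:construction} explicit, whereas yours leans on Lemma~\ref{lm:hard} and the trajectory picture. Both are sound.
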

\begin{proof}
To check item~\emph{(ii)} of Corollary~\ref{cor:linstab}, consider any $x\in \R^n$ such that $x^\top Q x=0$, i.e. $I(x)=\{1,2\}$, and $\alpha_V(x)=\{\ell_1,\dots, \ell_p\}\subset \{1,\dots, K\}$ with $p>1$. We consider 2 cases:\\
\emph{Case 1: Suppose there exist $\ell',\ell''\in \alpha_V(x)$, $\ell'\neq \ell''$ such that $x^\top (P_{\ell'}-P_{\ell''})=\tau x^\top Q$, for some $\tau\in\R\setminus \{0\}$.}\\
Then the equation (resembling~\eqref{eq:linearLiequality},) 
\[
x^\top P_{\ell'}(\lambda A_1 x+(1-\lambda)A_2x)=x^\top P_{\ell''}(\lambda A_1 x+(1-\lambda)A_2x),
\]
has solutions $\lambda\in[0,1]$ if and only if there exists $\lambda\in [0,1]$ such that
\begin{equation}\label{eq:NoSwLemma:Equality1}
x^\top Q(\lambda A_1x +(1-\lambda)A_2x)=0.
\end{equation}
We have supposed that~\eqref{eq:SigmaMatrix} in Step~\ref{step:RulingSliding} holds for $z=\frac{x}{|x|}$, thus by homogeneity of $F^\sw_{\text{lin}}(x)$ equation~\eqref{eq:NoSwLemma:Equality1} has no solution $\lambda\in[0,1]$ since the scalars
$x^\top Q A_1 x$ and $x^\top Q A_2 x$ have the same sign (and are not zero). Recalling equations~\eqref{eq:bigsystem1} and~\eqref{eq:doubleimplication}, this implies that $\dot{\overline{V}}_{F^\sw_{\text{lin}}}(x)=\emptyset$, ensuring \emph{(ii)} of Corollary~\ref{cor:linstab}.\\
\emph{Case 2: Suppose that $\forall\; \ell',\ell''\in \alpha_V(x)$, $\ell'\neq \ell''$,  $x^\top (P_{\ell'}-P_{\ell''})\neq \tau x^\top Q_1$, for all $\tau\in\R\setminus \{0\}$.}\\
In this case we show in Lemma~\ref{lemma:TechincaLemma} in~\ref{sec:AppendixTecLemma} that there exists a sequence $x_k\to x$ such that $x_k\in \cQ$, (i.e. $I(x_k)=\{1,2\}$) and $\alpha_V(x_k)=\{\oell\}$, for all $k\in \N$, for an $\oell\in \alpha_V(x)$. By hypothesis, $V$ satisfies item~\emph{(i)} of Corollary~\ref{cor:linstab}, implying by continuity that, for every $k \in \N$,
 \[
 x_k^\top P_{\oell} A_1x_k\leq-\varepsilon|x_k|^2, \text{ and } x_k^\top P_{\oell} A_2x_k\leq-\varepsilon|x_k|^2.
 \]
Since $x_k\to x$ when $k\to \infty$, again by continuity we have
 \[
x^\top P_{\oell} A_1x\leq-\varepsilon|x|^2 \;\text{ and }\;x^\top P_{\oell} A_2x\leq-\varepsilon|x|^2.
 \]
 Thus, $\forall\lambda\in [0,1]$ such that
 \[
x^\top P_{\ell_1}(\lambda A_1+(1-\lambda)A_2)x=\dots=x^\top P_{\ell_p}(\lambda A_1x+(1-\lambda)A_2x),
 \]
 we have $x^\top P_{\oell}(\lambda A_1+(1-\lambda)A_2)x\leq-\varepsilon|x|^2$. Recalling ~\eqref{eq:bigsystem1} and~\eqref{eq:doubleimplication} it implies that $\max\dot{\overline{V}}_{F^\sw_{\text{lin}}}(x)\leq -\varepsilon|x|^2$.\\
 Having proved item~\emph{(ii)} of Corollary~\ref{cor:linstab} in both \emph{Cases 1} and \emph{2}, we can conclude.
 \end{proof}

 \begin{example}\label{ex:TriDExample}
Concluding this section, we present a switched system evolving in $\R^3$ and we prove GAS using Proposition~\ref{Prop:StabilityMultiDim}. \\ Let us consider the matrices,
\begin{equation}\label{eq:exampleSystemdata}
A_1=\begin{bsmallmatrix}
-0.1 & -1& 0\\1 & -0.1 & 0\\ 0 & 0& 0.2
\end{bsmallmatrix},\;A_2=\begin{bsmallmatrix}
-0.2 & 1& 0.1\\-1 & -0.2 & 0\\ 0.1 & 0& -0.1
\end{bsmallmatrix},\;
Q=\begin{bsmallmatrix}
1 & 0 & 0\\ 0 & 1 & 0\\ 0 & 0 & -1
\end{bsmallmatrix}
\end{equation}
and $Q_1=Q,\;\;Q_2=-Q$.
It is easy to see that they define a system of the form~\eqref{eq:linear} and moreover $Q$ is invertible. Parameterizing a generic $x\in \cQ=\{x\in \R^3\;\vert\;x^\top Q x=0\}$ as 
$
x=\begin{bsmallmatrix}
x_1,~x_2,~ \pm \sqrt{x_1^2+x_2^2}
\end{bsmallmatrix}^\top,
$
it can  be seen that~\eqref{eq:SigmaMatrix} in Step~\ref{step:RulingSliding} holds. Using the algorithms~\ref{alg:FunctionPhi} and~\ref{alg:LyapunovAlg} of Section~\ref{subsec:Item(i)}, we prove here that the max of 2 quadratics defined by
\[
V(x):=\max\{x^\top P_1x ,x^\top P_2x\}
\]
with $P_1:=\begin{bsmallmatrix}  4 & 0& 0\\ 0& 4 & 0\\0&0&1\end{bsmallmatrix}$ and $P_2:=\begin{bsmallmatrix}  3 & 0& 0\\ 0& 3 & 0\\0&0&2\end{bsmallmatrix}$ satisfies item~\emph{(i)} of Corollary~\ref{cor:linstab}.
First of all, we have that $P_1-P_2=Q$, and thus the analysis outlined in Step~\ref{rmk:SelectionMap} is simplified, since
\[
\begin{aligned}
&E_{\rho_1}=\{x\in \R^3\vert\;x^\top Q_1 x>0\}=:D_1\;\text{and}\;\Phi(\rho_1)=1,\\
&E_{\rho_2}=\{x\in \R^3\vert\;x^\top Q_2 x>0\}=:D_2\;\text{and}\;\Phi(\rho_2)=2,
\end{aligned}
\]
where $\rho_1=(1,2)$ and $\rho_2=(2,1)$ denote the two elements of $\bS_2$.
Following Step~\ref{rmk:S-PROC}, item \emph{(i)} of Corollary~\ref{cor:linstab} holds, since $P_1A_1+A_1^\top P_1+\tau_1 Q_1<0$ and $P_2A_2+A_2^\top P_2+\tau_2 Q_2<0$
are satisfied choosing $\tau_1=0.6, \tau_2=0$. Since \eqref{eq:SigmaMatrix}~and~\eqref{eq:NonDegenerate} hold, invoking Proposition~\ref{Prop:StabilityMultiDim} we have that item~\emph{(ii)} of Corollary~\ref{cor:linstab} is satisfied, and $V$ is a Lyapunov function proving GAS of system~\eqref{eq:exampleSystemdata}.
In Figure~\ref{figure:3d}, we have plotted the evolution of $V$ along 5 particular solutions of system~\eqref{eq:exampleSystemdata}.

 \end{example}

\section{Conclusions}\label{sec:conc}
For the class of systems comprising differential inclusions, and state-dependent switched systems, we introduced a family of nonsmooth functions obtained by max-min combinations. Based on two notions of generalized directional derivatives, we proposed sufficient conditions for global asymptotic stability. For a class of systems with conic switching regions and linear dynamics within each of these regions, we studied some conditions under which a max-min condition can be obtained by solving matrix inequalities. A possible route for future research is the generalization of this approach to a wider class of systems, and develop further numerical tools for checking the proposed Lie derivative based conditions.


\appendix
\section{A Technical Lemma}\label{sec:AppendixTecLemma}
\begin{lemma}\label{lemma:TechincaLemma}
Consider $Q\in \Sym(\R^{n})$ invertible and  any max-min function $V \in \Mm(x^\top P_1 x, \dots, x^\top P_Kx)$, such that $P_1,\dots,P_K>0$ satisfy~\eqref{eq:NonDegenerate}. Consider a point $x\in \R^n$ such that $x^\top Qx=0$ and $\alpha_V(x)=\{\ell_1,\dots,\ell_p\}$ ($p>1$). If  $\forall\; \ell',\ell''\in \alpha_V(x)$, $\ell'\neq \ell''$,  $x^\top (P_{\ell'}-P_{\ell''})\neq \tau x^\top Q$, for all $\tau\in\R\setminus \{0\}$, then there exists a sequence $x_k\to x$ such that $x_k^\top Q x_k=0$ and  $\alpha_V(x_k)=\{\oell\}$, for all $k\in \N$, for an $\oell\in \alpha_V(x)$.
\end{lemma}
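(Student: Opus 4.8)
First I would record two elementary facts. The plan is to perturb $x$ within the switching quadric $\cQ:=\{z\in\R^n : z^\top Q z=0\}$ so as to break all the ties $V_{\ell_i}(\cdot)=V_{\ell_j}(\cdot)$ among the essentially active base quadratics, thereby landing in a region where $\alpha_V$ is single-valued. Pick distinct $\ell',\ell''\in\alpha_V(x)$ (possible since $p>1$); the hypothesis already forces $x\neq 0$, since otherwise $x^\top(P_{\ell'}-P_{\ell''})$ and $\tau x^\top Q$ would both vanish, say for $\tau=1$. Hence $Qx\neq 0$ ($Q$ invertible), and $(P_{\ell_i}-P_{\ell_j})x\neq 0$ for $i\neq j$ by \eqref{eq:NonDegenerate}; so for all $i\neq j$ the covectors $x^\top(P_{\ell_i}-P_{\ell_j})$ and $x^\top Q$ are nonzero and non-parallel.

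Next I would fix, via Lemma~\ref{lemma:prelimnLemma}, a neighborhood $\cU$ of $x$ on which $V$ is expressed only through $\{V_\ell : \ell\in\alpha_V(x)\}$, namely $V(z)=V_{\ell_z}(z)$ with $\ell_z\in\alpha_V(x)=\{\ell_1,\dots,\ell_p\}$ for every $z\in\cU$. \textbf{Claim:} if $z\in\cU$ and the values $V_{\ell_1}(z),\dots,V_{\ell_p}(z)$ are pairwise distinct, then $\alpha_V(z)$ is a singleton contained in $\alpha_V(x)$. Indeed, on a small ball $\cV\ni z$ inside $\cU$ the strict ordering among $\{V_{\ell_i}\}_{i=1}^p$ is preserved, so the index $\ell^\star\in\alpha_V(x)$ with $V=V_{\ell^\star}$ is locally constant on $\cV$ by continuity of $V$ and of the $V_{\ell_i}$; thus $\cV\subset C_{\ell^\star}$ and $z\in\inn(C_{\ell^\star})$. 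Any $\ell\in\alpha_V(z)$ with $\ell\neq\ell^\star$ would, by Definition~\ref{def:alpha}, provide points of $\inn(C_\ell)$ arbitrarily close to $z$, hence a nonempty open set on which $V_\ell=V=V_{\ell^\star}$, forcing $P_\ell=P_{\ell^\star}$ and contradicting \eqref{eq:NonDegenerate}. This reduces the task to producing $x_k\to x$ with $x_k\in\cQ\cap\cU$ at which $V_{\ell_1},\dots,V_{\ell_p}$ take pairwise distinct values.

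For this I would introduce $B:=\bigcup_{1\le i<j\le p}\{z : z^\top(P_{\ell_i}-P_{\ell_j})z=0\}$. Since $\nabla(z\mapsto z^\top Q z)(x)=2Qx\neq 0$, the set $\cQ$ is, in a neighborhood of $x$, a connected real-analytic hypersurface with tangent space $T_x\cQ=(Qx)^\perp$. For each pair $i<j$ the polynomial $g_{ij}(z):=z^\top(P_{\ell_i}-P_{\ell_j})z$ restricts to a real-analytic function on $\cQ$; were this restriction to vanish on a neighborhood of $x$ in $\cQ$, then $\nabla g_{ij}(x)=2(P_{\ell_i}-P_{\ell_j})x$ would be orthogonal to $T_x\cQ$, i.e.\ $x^\top(P_{\ell_i}-P_{\ell_j})$ would be a scalar multiple of $x^\top Q$ — a nonzero one, since $(P_{\ell_i}-P_{\ell_j})x\neq 0$ — contradicting the non-parallelism above. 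Hence $g_{ij}|_{\cQ}\not\equiv 0$ near $x$, so by the identity theorem for real-analytic functions its zero set is nowhere dense in $\cQ$ near $x$, and the finite union $\cQ\cap B$ is nowhere dense in $\cQ$ near $x$ as well. Choosing $x_k\to x$ with $x_k\in(\cQ\setminus B)\cap\cU$, the Claim gives $\alpha_V(x_k)=\{\ell^\star_k\}$ with $\ell^\star_k\in\alpha_V(x)$; since $\alpha_V(x)$ is finite, some value $\oell$ occurs infinitely often, and the corresponding subsequence satisfies $x_k\to x$, $x_k^\top Q x_k=0$ and $\alpha_V(x_k)=\{\oell\}$ with $\oell\in\alpha_V(x)$, which is the assertion.

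I expect the main obstacle to be this last step — showing $\cQ\cap B$ is nowhere dense in $\cQ$ near $x$ — since it is the only place where the non-colinearity hypothesis between $x^\top(P_{\ell'}-P_{\ell''})$ and $x^\top Q$ enters, and it has to go through the real-analytic structure of $\cQ$ (via the identity theorem) rather than a naive dimension count, because over $\R$ the way the zero set of one indefinite quadratic form meets that of another can be subtle.
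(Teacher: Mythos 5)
Your proof is correct, and the overall strategy coincides with the paper's: perturb $x$ within the quadric $\cQ$ so as to break all ties among the essentially active quadratics, then use Lemma~\ref{lemma:prelimnLemma} (plus finiteness of $\alpha_V(x)$ to extract a constant subsequence); in both arguments the non-colinearity hypothesis enters through the same linear-algebra fact, namely that $x^\top Q$ and $x^\top(P_{\ell_i}-P_{\ell_j})$ are nonzero, linearly independent covectors. Where you genuinely diverge is in how the tie-breaking points on $\cQ$ are produced. The paper picks a single tangent vector $v\in T_x(\cQ)$ with $x^\top(P_{\ell_i}-P_{\ell_j})v\neq 0$ for all pairs (possible because the hyperplane $\ker(x^\top Q)$ is not contained in any of the hyperplanes $\ker(x^\top(P_{\ell_i}-P_{\ell_j}))$), runs a $\cC^1$ curve $\psi$ in $\cQ$ with $\psi(0)=x$, $\dot\psi(0)=v$, and observes that each $\tau\mapsto \psi(\tau)^\top(P_{\ell_i}-P_{\ell_j})\psi(\tau)$ vanishes at $0$ with nonzero derivative, hence is nonzero for small $\tau>0$; the points $x_k=\psi(\tau_k)$ then do the job. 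You instead prove that the union of the tie sets is closed and nowhere dense in $\cQ$ near $x$, via real-analyticity of the hypersurface and the several-variable identity theorem. Both are valid: the paper's route is more elementary and constructive (a first-order expansion along one curve suffices, and the step you flagged as the ``main obstacle'' is thereby bypassed entirely), whereas yours yields the stronger conclusion that generic points of $\cQ$ near $x$ work. Your intermediate Claim --- that pairwise distinctness of the $V_{\ell_i}$ at $z\in\cU$ forces $\alpha_V(z)$ to be a singleton, with \eqref{eq:NonDegenerate} ruling out coincidence of two base quadratics on a nonempty open set --- is a correct and usefully explicit justification of a step the paper delegates to the citation of Lemma~\ref{lemma:prelimnLemma}.
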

\begin{proof}
Since $Q$ is invertible, for all $x\in \R^n\setminus \{0\}$ such that $x\in \cQ=\{x\in \R^n\;\vert\;x^\top Q x=0\}$, we can define the tangent space of $\cQ$ at $x$ as $T_x(\cQ):=\{w\in \R^n\;\vert\;x^\top Q w=0\}$, see for example~\cite[Page 23]{barden2003introduction}.
Consider $v \in \R^n$, $v \neq 0$, such that $x^\top Q v=0$ and $x^\top (P_{\ell'}-P_{\ell''})v\neq 0$, for all $\ell',\ell''\in \alpha_V(x)$, $\ell'\neq \ell''$. Such a $v\in \R^n$ exists,  since, by~\eqref{eq:NonDegenerate},  $Q$ and $P_{\ell'}-P_{\ell''}$ are invertible and $x^\top Q$ and $x^\top (P_{\ell'}-P_{\ell''})$ are linearly independent, for all $\ell',\ell''\in \alpha_V(x)$, $\ell'\neq \ell''$. By definition of  $T_x(\cQ)$, given $\beta>0$, there exists  continuously differentiable function $\psi:(-\beta,\beta)\to\R^n$ such that $\psi(0)=x$, $\dot \psi(0)=v$, and $\psi(\tau)^\top Q \psi(\tau)=0$, (i.e. $ \psi(\tau)\in\cQ$), $\forall\;\tau\in (-\beta,\beta)$.
For all $\ell',\ell''\in \alpha_V(x)$, $\ell'\neq \ell''$, define $\Psi_{\ell',\ell''}:(-\beta,\beta)\to\R$ as
\[
\Psi_{\ell',\ell''}(\tau):=\psi(\tau)^\top (P_{\ell'}-P_{\ell''})\psi(\tau).
\]
 Since $\ell', \ell''\in \alpha_V(x)$ we have $\Psi_{\ell',\ell''}(0)=0$; moreover $\Psi_{\ell',\ell''}$ is continuously differentiable at $0$ and by the chain rule $\dot \Psi_{\ell',\ell"}(0)=x^\top (P_{\ell'}-P_{\ell''})v\neq 0$. This means that there exists a $\beta'< \beta$, $\beta'\neq 0$ such that
 \begin{equation}\label{eq:permanenceofSign}
 \Psi_{\ell',\ell''}(\tau)=\psi(\tau)^\top P_{\ell'}\psi(\tau)-\psi(\tau)^\top P_{\ell''}\psi(\tau)\neq 0,
 \end{equation}
 for all $\tau\in (0,\beta')$, for all $\ell',\ell''\in \alpha_V(x)$, $\ell'\neq \ell''$.
 We now consider a sequence $\tau_k\to 0$ such that $\tau_k\in (0,\beta')$, $\forall k\in \N$, and define $x_k:=\psi(\tau_k)$. Without loss of generality we can suppose $x_k\in \cU$, for all $k\in \N$, where $\cU$ is the open neighborhood of $x$ defined in Lemma~\ref{lemma:prelimnLemma}. By~\eqref{eq:permanenceofSign} we have
 \[
x_k^\top P_{\ell'} x_k\neq x_k^\top P_{\ell''} x_k, \;\;\forall\,\ell',\ell''\in \alpha_V(x),\,\ell'\neq \ell''.
 \]
 By Lemma~\ref{lemma:prelimnLemma} this implies that,  $\forall \, k\in \N$, there exists $\ell_k\in\alpha_V(x)$ such that $\alpha_V(x_k)=\{\ell_k\}$. By finiteness of $\alpha_V(x)$, possibly considering a subsequence, we can suppose $\alpha_V(x_k)=\{\oell\}$ $\forall\,k\in \N$, with $\oell\in \alpha_V(x)$. Since, by definition of $\psi$,  $x_k^\top Qx_k=0$, $\forall\,k\in \N$, we also have $I(x_k)=\{1,2\}$, $\forall k\in \N$. 
\end{proof}

\footnotesize
\bibliography{biblio}

\begin{thebibliography}{10}

\bibitem{ahmadi}
A.~A. Ahmadi, R.~M. Jungers, P.~A. Parrilo, and M.~Roozbehani.
\newblock Joint spectral radius and path-complete graph {Lyapunov} functions.
\newblock {\em SIAM Journal on Control and Optimization}, 52(1):687--717, 2014.

\bibitem{angeli}
D.~Angeli, N.~Athanasopoulos, R.~M. Jungers, and M.~Philippe.
\newblock Path-complete graphs and common {Lyapunov} functions.
\newblock In {\em Proc.~20th ACM Conf. Hybrid Systems: Computation and
  Control}, pages 81--90, 2017.

\bibitem{bacciotti99}
A.~Bacciotti and F.~M. Ceragioli.
\newblock Stability and stabilization of discontinuous systems and nonsmooth
  {Lyapunov} functions.
\newblock {\em ESAIM: Control, Optimisation and Calculus of Variations},
  4:361--376, 1999.

\bibitem{BaccRosi05}
A.~Bacciotti and L.~Rosier.
\newblock {\em Lyapunov Functions and Stability in Control Theory}.
\newblock Springer-Verlag, Heidelberg, 2nd edition, 2005.

\bibitem{BaiGru12}
R.~Baier, L.~Gr{\"u}ne, and S.~F. Hafstein.
\newblock Linear programming based {L}yapunov function computation for
  differential inclusions.
\newblock {\em Discrete {\&} Continuous Dynamical Systems - B}, 17:33, 2012.

\bibitem{barden2003introduction}
D.~Barden and C.~B. Thomas.
\newblock {\em An Introduction to Differential Manifolds}.
\newblock Imperial College Press, 2003.

\bibitem{BlanMian08}
F.~Blanchini and S.~Miani.
\newblock {\em Set-Theoretic Methods in Control}.
\newblock Birkh{\"a}user, 2008.

\bibitem{blanchini}
F.~Blanchini and C.~Savorgnan.
\newblock Stabilizability of switched linear systems does not imply the
  existence of convex {Lyapunov} functions.
\newblock {\em Automatica}, 44(4):1166--1170, 2008.

\bibitem{ceragioli}
F.~M. Ceragioli.
\newblock {\em Discontinuous ordinary differential equations and
  stabilization}.
\newblock PhD thesis, Univ.~Firenze, Italy, 2000.
\newblock Available online: \url{http://porto.polito.it/2664870/}.

\bibitem{clarke3}
F.~H. Clarke.
\newblock {\em Optimization and Nonsmooth Analysis}.
\newblock Classics in Applied Mathematics. SIAM, 1990.

\bibitem{clarke1}
F.~H. Clarke, Y.~S. Ledyaev, R.~J. Stern, and P.R. Wolenski.
\newblock {\em Nonsmooth Analysis and Control Theory}, volume 178 of {\em
  Graduate Texts in Mathematics}.
\newblock Springer-Verlag, New York, 1998.

\bibitem{cortes}
J.~Cortes.
\newblock Discontinuous dynamical systems.
\newblock {\em IEEE Control Systems Magazine}, 28(3):36--73, 2008.

\bibitem{dayawansa}
W.~P. Dayawansa and C.~F. Martin.
\newblock A converse {Lyapunov} theorem for a class of dynamical systems which
  undergo switching.
\newblock {\em IEEE Transactions on Automatic Control}, 44(4):751--760, 1999.

\bibitem{DecaBran00}
R.~A. {DeCarlo}, M.~S. Branicky, S.~Pettersson, and B.~Lennartson.
\newblock Perspectives and results on the stability and stabilizability of
  hybrid systems.
\newblock {\em Proceedings of the {IEEE}}, 88:1069--1082, 2000.

\bibitem{dellarossa18}
M.~{Della Rossa}, A.~Tanwani, and L.~Zaccarian.
\newblock Max-min {Lyapunov} functions for switching differential inclusions.
\newblock In {\em 57th {IEEE} Conf. on Decision and Control (CDC)}, pages
  5664--5669, 2018.

\bibitem{filippov1988differential}
A.~F. Filippov.
\newblock {\em Differential Equations with Discontinuous Right-Hand Side}.
\newblock Kluwer Academic Publisher, 1988.

\bibitem{goebel2}
R.~Goebel, T.~Hu, and A.~R. Teel.
\newblock Dual matrix inequalities in stability and performance analysis of
  linear differential/difference inclusions.
\newblock In {\em Current trends in nonlinear systems and control}, pages
  103--122. Springer, 2006.

\bibitem{GoebSanf12}
R.~Goebel, R.~G. Sanfelice, and A.~R. Teel.
\newblock {\em Hybrid Dynamical Systems: Modeling, Stability, and Robustness}.
\newblock Princeton University Press, 2012.

\bibitem{goebel}
R.~Goebel, A.~R. Teel, T.~Hu, and Z.~Lin.
\newblock Conjugate convex {Lyapunov} functions for dual linear differential
  inclusions.
\newblock {\em IEEE Transactions on Automatic Control}, 51(4):661--666, 2006.

\bibitem{HuBlanchini2010}
T.~Hu and F.~Blanchini.
\newblock Non-conservative matrix inequality conditions for
  stability/stabilizability of linear differential inclusions.
\newblock {\em Automatica}, 46(1):190 -- 196, 2010.

\bibitem{hu}
T.~Hu, L.~Ma, and Z.~Lin.
\newblock Stabilization of switched systems via composite quadratic functions.
\newblock {\em IEEE Transactions on Automatic Control}, 53(11):2571--2585,
  2008.

\bibitem{IerTan17}
R.~Iervolino, D.~Tangredi, and F.~Vasca.
\newblock Lyapunov stability for piecewise affine systems via
  cone-copositivity.
\newblock {\em Automatica}, 81:22--29, 2017.

\bibitem{johansson}
M.~Johansson and A.~Rantzer.
\newblock Computation of piecewise quadratic {Lyapunov} functions for hybrid
  systems.
\newblock {\em IEEE Transactions on Automatic Control}, 43(4):555--559, 1998.

\bibitem{KamRosPar19}
R.~{Kamalapurkar}, J.A. {Rosenfeld}, A.~{Parikh}, A.R. {Teel}, and W.E.
  {Dixon}.
\newblock Invariance-like results for nonautonomous switched systems.
\newblock {\em IEEE Transactions on Automatic Control}, 64(2):614--627, 2019.

\bibitem{khalil2002nonlinear}
H.~K. Khalil.
\newblock {\em Nonlinear Systems}.
\newblock Pearson Education. Prentice Hall, 2002.

\bibitem{liberzon}
D.~Liberzon.
\newblock {\em Switching in Systems and Control}.
\newblock Birkha\"user, 2003.

\bibitem{LibeHesp99}
D.~Liberzon, J.~P. Hespanha, and A.~S. Morse.
\newblock Stability of switched systems: {A} {Lie}-algebraic condition.
\newblock {\em Systems \& Control Letters}, 37:117--122, 1999.

\bibitem{LinAnts09}
H.~Lin and P.J. Antsaklis.
\newblock Stability and stabilizability of switched linear systems: A survey of
  recent results.
\newblock {\em {IEEE} Transactions on Automatic Control}, 54(2):308 -- 322,
  2009.

\bibitem{MaliMaze09}
M.~Malisoff and F.~Mazenc.
\newblock {\em Constructions of Strict Lyapunov Functions}.
\newblock Communications and Control Engineering. Springer-Verlag, London,
  2009.

\bibitem{MasoBosc06}
P.~Mason, U.~Boscain, and Y.~Chitour.
\newblock Common polynomial {Lyapunov} functions for linear switched systems.
\newblock {\em {SIAM} Journal on Optimization and Control}, 45(1), 2006.

\bibitem{molcha}
A.~P. Molchanov and Y.~S. Pyatnitskiy.
\newblock Criteria of asymptotic stability of differential and difference
  inclusions encountered in control theory.
\newblock {\em Systems \& Control Letters}, 13(1):59--64, 1989.

\bibitem{NareBala94}
K.~S. Narendra and J.~Balakrishnan.
\newblock A common {Lyapunov} function for stable {LTI} systems with commuting
  $a$-matrices.
\newblock {\em IEEE Transactions on Automatic Control}, 39:2469--2471, 1994.

\bibitem{Ovc10}
S.~Ovchinnikov.
\newblock Discrete piecewise linear functions.
\newblock {\em European Journal of Combinatorics}, 31(5):1283 -- 1294, 2010.

\bibitem{PanRal96}
J.-S. Pang and D.~Ralph.
\newblock Piecewise smoothness, local invertibility, and parametric analysis of
  normal maps.
\newblock {\em Mathematics of Operations Research}, 21(2):401--426, 1996.

\bibitem{PapaPraj09}
A.~Papachristodoulou and S.~Prajna.
\newblock Robust stability analysis of nonlinear hybrid systems.
\newblock {\em {IEEE} Transactions on Automatic Control}, 54(5):1037--1043,
  2009.

\bibitem{PettLenn02}
S.~Pettersson and B.~Lennartson.
\newblock Hybrid system stability and robustness verification using linear
  matrix inequalities.
\newblock {\em International Journal of Control}, 75(16-17):1335--1355, 2002.

\bibitem{Sch12}
S.~Scholtes.
\newblock {\em Introduction to Piecewise Differentiable Equations}.
\newblock Springer Briefs in Optimization. Springer-Verlag, New York, 2012.

\bibitem{ShorWirt07}
R.~Shorten, F.~Wirth, O.~Mason, K.~Wulff, and C.~King.
\newblock Stability criteria for switched and hybrid systems.
\newblock {\em SIAM Review}, 49(4):545--592, 2007.

\bibitem{TeelPral00a}
A.~R. Teel and L.~Praly.
\newblock On assigning the derivative of a disturbance attenuation control
  {Lyapunov} function.
\newblock {\em Mathematics of Controls, Signals, and Systems}, 13:95--124,
  2000.

\bibitem{TeelPral00b}
A.~R. Teel and L.~Praly.
\newblock A smooth {Lyapunov} function from a class-$\mathcal{K}\mathcal{L}$
  estimate involving two positive semidefinite functions.
\newblock {\em {ESAIM}: {C}ontrol, {O}ptimisation and {C}alculus of
  {V}ariations}, 5:313--367, 2000.

\bibitem{xie}
L.~Xie, S.~Shishkin, and M.~Fu.
\newblock Piecewise {Lyapunov} functions for robust stability of linear
  time-varying systems.
\newblock {\em Systems \& Control Letters}, 31(3):165--171, 1997.

\end{thebibliography}

\end{document}